\documentclass{article}
\usepackage[T1]{fontenc}
\usepackage[utf8]{inputenc}
\usepackage{authblk}

\title{Balanced Excited Random Walk\\ in Two Dimensions}

\author[1]{Omer Angel\thanks{angel@math.ubc.ca}}
\author[2]{Mark Holmes\thanks{holmes.m@unimelb.edu.au}}
\author[3]{Alejandro Ram\'\i rez\thanks{aramirez@mat.uc.cl}}
\affil[1]{Department of Mathematics,   the University of
  British Columbia}
\affil[2]{School of Mathematics and Statistics,  the University 
  of Melbourne}
\affil[3]{Facultad de Matem\'aticas,  Pontificia Universidad
  Cat\'olica
  de Chile}

\date{\today}

\usepackage{amsmath,amssymb,amsfonts,amsthm}

\usepackage[colorlinks=true,linkcolor=black,citecolor=black,urlcolor=blue,pdfborder={0 0 0}]{hyperref}

\usepackage{graphicx}
\usepackage{enumitem}
\usepackage{color}

\usepackage[margin=1cm]{caption}
\usepackage[protrusion=true
%,expansion=true
]{microtype}

\usepackage[nameinlink]{cleveref}
  \crefname{theorem}{Theorem}{Theorems}
  \crefname{thm}{Theorem}{Theorems}
  \crefname{mainthm}{Theorem}{Theorems}
  \crefname{lemma}{Lemma}{Lemmas}
  \crefname{lem}{Lemma}{Lemmas}
  \crefname{remark}{Remark}{Remarks}
  \crefname{prop}{Proposition}{Propositions}
  \crefname{defn}{Definition}{Definitions}
  \crefname{corollary}{Corollary}{Corollaries}
  \crefname{section}{Section}{Sections}
  \crefname{figure}{Figure}{Figures}

\newtheorem{thm}{Theorem}[section]
\newtheorem{theorem}[thm]{Theorem}

\newtheorem{lemma}[thm]{Lemma}
\newtheorem{corollary}[thm]{Corollary}
\newtheorem{prop}[thm]{Proposition}

\newtheorem{remark}[thm]{Remark}

\newtheorem{open}[thm]{Open Problem}

\numberwithin{equation}{section}

\usepackage{bbm}
\usepackage[textsize=scriptsize]{todonotes}
\usepackage{mathtools}
 \newcommand{\indic}[1]{\mathbbm{1}_{\left\{#1\right\}}}

\DeclareMathOperator{\Bin}{Bin}
\newcommand{\vep}{\varepsilon}
\newcommand{\mc}[1]{\mathcal{#1}}
\newcommand{\cL}{\mathcal{L}}

\newcommand{\Maaak}[1]{{\color{purple} #1}}

\newcommand{\bs}[1]{\boldsymbol{#1}}
\newcommand{\sss}[1]{\scriptscriptstyle{#1}}

\newcommand{\floor}[1]{\lfloor #1 \rfloor}

\newcommand{\blank}[1]{}
\newcommand{\bp}{\bs{P}}
\newcommand{\bq}{\bs{Q}}
\newcommand{\LL}{\mathrm{L}}
\newcommand{\ord}[1]{\overset{{\scriptscriptstyle #1}}{\smile}}
\newcommand{\Geom}{\mathrm{Geom}}
\newcommand{\Neg}{\mathrm{Neg}}
\newcommand{\ouch}{\lceil 2np \rceil}

\renewcommand{\P}{\mathbb P}
\newcommand{\Z}{\mathbb Z}
\newcommand{\E}{\mathbb E}
\newcommand{\R}{\mathbb R}
\newcommand{\N}{\mathbb N}
\newcommand{\eps}{\varepsilon}
\newcommand{\F}{\mathcal{F}}

\newcommand{\cS}{\mathcal S}

\newcommand{\cB}{\mathcal B}
\newcommand{\cA}{\mathcal A}
\newcommand{\ssX}{{\sss{\bs{X}}}}
\newcommand{\ssY}{{\sss{\bs{Y}}}}
\newcommand{\ssZ}{{\sss{\bs{Z}}}}

\interfootnotelinepenalty=10000

%%%%%%%%%%%%%%%%%%%%%%%%%%%%%%%%%%%%%%%%%%%%%%%%%%%%%%%%%%%%%%%%%% 
\begin{document}
\maketitle
\abstract{We give non-trivial upper and lower bounds on the range of the so-called Balanced Excited Random Walk in two dimensions, and verify a conjecture of Benjamini, Kozma and Schapira.  To the best of our knowledge these are the first non-trivial results for this 2-dimensional model.}

\vspace{0.5cm}

\noindent{\bf MSC2020:} Primary 60K35, Secondary 60G42.

\noindent{\bf Keywords:} Excited random walk, martingale, range.

\section{Introduction and main results}
\label{sec:intro}

We consider the following model of a random walk in $\Z^2$.
On the first departure from each site $x$, the walk makes a vertical step.
On all subsequent departures from the vertex the walk makes a horizontal step.
In each case, the step is to one of the two neighbours with equal probability, independent of all previous randomness, so that the walk $\bs{Z}=(Z_n)_{n \ge 0}$ is a martingale.  Figure \ref{fig:simulation} shows a simulation of this process, which we also refer to as BERW, up to time $n=10^8$.

\begin{figure}
\includegraphics[scale=0.4]{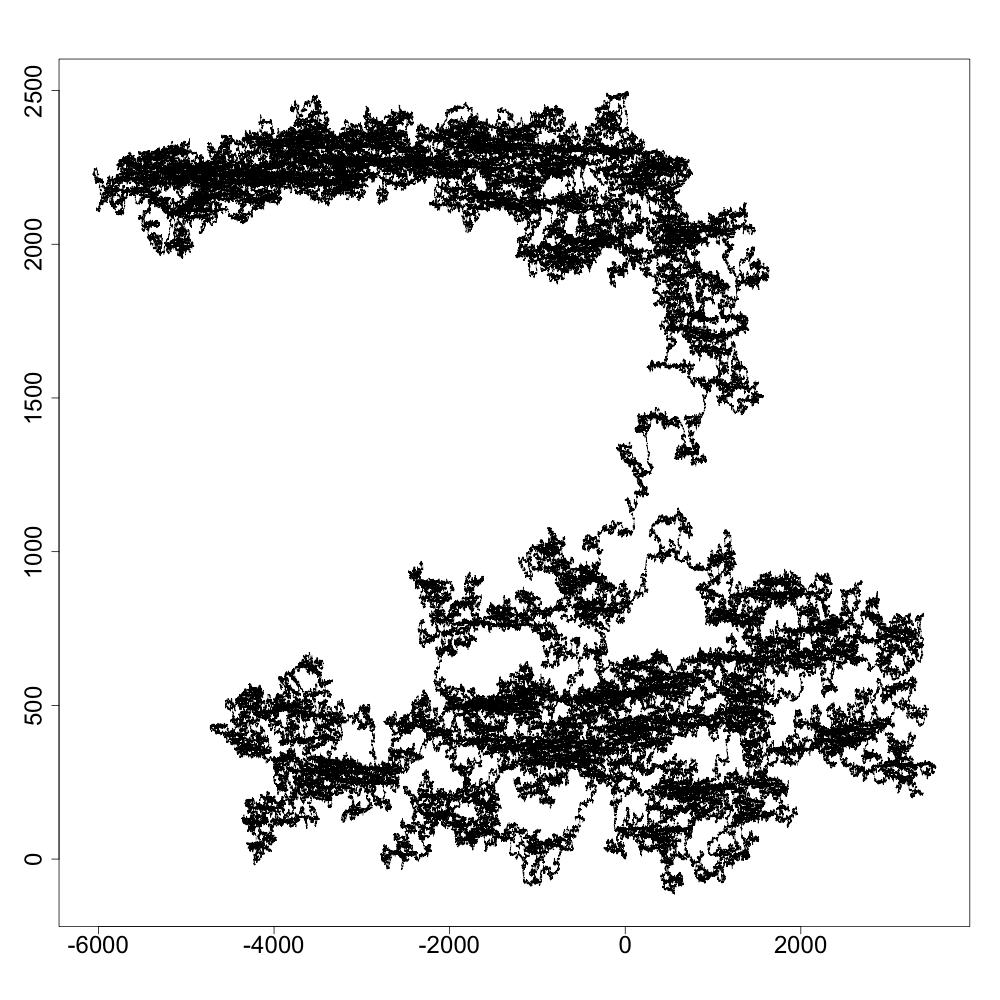}
%\vspace{-1cm}
\caption{A simulation of the path of the 2-dimensional balanced excited random walk, up to time $n=10^8$}
\label{fig:simulation}
\end{figure}

This model was introduced in more general dimensions $d$ by Benjamini, Kozma and Schapira in \cite{BKS11}: on the first departure from any site a simple random walk step in the first $d_1$ coordinate directions is taken, while on subsequent departures a simple random walk step in the last $d_2$ coordinate directions is taken, and $d=d_1+d_2$.  If $d_1\vee d_2\ge 3$ then the walk is trivially transient.  They proved that in $4=2+2$ dimensions the walk is transient, and they stated various conjectures and beliefs about lower dimensional settings.  In 3 dimensions, where $3=2+1$ and $3=1+2$ dimensions they conjectured that the walk would be transient.  This was proved by Peres,  Schapira and Sousi in 2016 \cite{PSS16} for the
case $3=1+2$.  

In the 2 dimensional case $2=1+1$, Benjamini et al.~\cite{BKS11} conjecture that the walk is recurrent, in the sense that every vertex is visited infinitely often (a.s.).  To the best of our knowledge, nothing non-trivial has been proved about this model.  As the walk is non-Markovian, it is not even clear that the set of recurrent sites (i.e.~those visited infinitely often by the walk) is empty or all of $\Z^2$.  Our first main result, Theorem \ref{thm:dichotomy} verifies this statement.

We say that a vertex $x\in\Z^2$ is recurrent for a trajectory $\bs{Z}=(Z_n)_{n\ge 0}$ if it is visited infinitely often by the trajectory (i.e.~$x=Z_i$ for infinitely many $i \in \Z_+:=\N\cup \{0\}$). Let $D$ denote the set of recurrent vertices of $\bs{Z}$.
\begin{theorem}
  \label{thm:dichotomy}
  For the balanced excited random walk in $2=1+1$ dimensions, 
  \begin{equation}
  \P(D\in \{\varnothing, \Z^2\})=1.
  \end{equation}
%  The set of recurrent vertices for the balanced excited random walk $(X_n)_{n\ge 0}$ is either empty or all of $\Z^2$ almost surely.
\end{theorem}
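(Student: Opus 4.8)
\emph{Proof proposal.} The plan is to show that $D$ is necessarily a union of full horizontal lines, and then that the (random) set of ``recurrent heights'' is a.s.\ closed under $m\mapsto m\pm1$, so that it is a.s.\ either $\varnothing$ or all of $\Z$.

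\emph{Step 1 (reduction to heights).} Fix $x\in\Z^2$. By construction the first departure from $x$ is vertical and every subsequent departure from $x$ is a fair $\pm e_1$ step chosen independently of the past; in particular the horizontal departures from $x$ (all but the first) have fair i.i.d.\ signs, independent of the times at which they occur. Hence, on the event $\{x\in D\}$, on which the walk departs from $x$ infinitely often, it steps from $x$ to $x+e_1$ infinitely often and to $x-e_1$ infinitely often, so $\{x\in D\}\subseteq\{x+e_1\in D\}\cap\{x-e_1\in D\}$ up to a null set. Iterating, $x\in D$ forces the entire horizontal line through $x$ into $D$; thus $D=\Z\times B$ for a random $B\subseteq\Z$, and it suffices to prove $\P(B\in\{\varnothing,\Z\})=1$. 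Since reflection in the horizontal axis is a symmetry of the model (it fixes the origin), it is enough to show that a.s.\ $m\in B$ implies $m+1\in B$.

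\emph{Step 2 (preliminary recurrence facts).} First, $|\{Z_0,\dots,Z_n\}|\to\infty$ a.s.: otherwise the walk would eventually be confined to a finite set all of whose sites are already visited, so every later step would be horizontal, i.e.\ the walk would perform simple random walk on a finite subset of one horizontal line, which is impossible. Symmetrically, the number of horizontal steps tends to $\infty$ a.s.: otherwise the walk would eventually take only vertical steps, hence perform simple random walk along one vertical line, which is again impossible since that walk revisits sites and thereby forces a horizontal step. Consequently the vertical coordinate of $Z_n$ is a simple random walk run along a clock tending to $\infty$, so it hits every integer infinitely often; the walk is thus at every height at infinitely many times. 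Moreover the walk cannot eventually stay at a single height (simple random walk on $\Z$ would discover new sites of that line, each triggering a vertical step), so it leaves every height infinitely often, and since each such departure is from a site freshly discovered on that line, the walk visits infinitely many distinct sites of every horizontal line.

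\emph{Step 3 (the crux: $m\in B$ implies $m+1\in B$).} Suppose for contradiction that with positive probability every site of the line $L_m$ at height $m$ is visited infinitely often while every site of the line $L_{m+1}$ is visited only finitely often. On this event the walk makes infinitely many excursions above height $m$ (it leaves height $m$ infinitely often by Step 2, and half of those departures --- first departures from distinct fresh sites, hence with fair i.i.d.\ signs --- go up), and each such upper excursion ends at a site of $L_{m+1}$ that was fresh just before the walk stepped back down. The plan is to extract from these excursions a recurrent induced walk that is forced to return to a fixed site of $L_{m+1}$ infinitely often. Concretely, one conditions on the trajectory up to the start of each upper excursion; the first-departure directions of the sites of $L_{m+1}$ discovered thereafter are then fair i.i.d.\ signs independent of that history, and combining this with the recurrence of simple random walk --- both for the horizontal motion along $L_{m+1}$ during an excursion and for the recurrent motion along $L_m$ that governs where the next excursion begins --- one produces infinitely many ``attempts'' to revisit a given site of $L_{m+1}$, each with conditionally non-negligible probability, so that a conditional Borel--Cantelli argument yields a site of $L_{m+1}$ visited infinitely often, contradicting the assumption. \textbf{This is the main obstacle}: the walk re-enters the upper half plane at columns dictated by its recurrent but complicated behaviour on and below $L_m$, so there is no clean Markov or renewal structure, and the delicate point is to make the ``infinitely many attempts with uniformly non-negligible probability'' claim precise despite the possible drift to infinity of the visited sites of $L_{m+1}$.

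\emph{Step 4 (conclusion).} Given $m\in B$, Step 3 gives a site of $L_{m+1}$ in $D$, and Step 1 applied to that site gives $L_{m+1}\subseteq D$, i.e.\ $m+1\in B$; by the reflection symmetry $m-1\in B$ too. Hence $B$ is a.s.\ closed under $m\mapsto m\pm1$, so $B\in\{\varnothing,\Z\}$ a.s., which is the assertion $\P(D\in\{\varnothing,\Z^2\})=1$.
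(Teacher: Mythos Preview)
Your Steps 1, 2 and 4 are fine (and Step 1 is essentially the same reduction the paper also uses: one recurrent site on a height forces that whole line to be recurrent). The gap is exactly where you flag it: Step 3 is not a proof but a hope. You want infinitely many ``attempts'' to revisit a fixed site of $L_{m+1}$, each with conditional probability bounded away from zero, but the location at which each upper excursion begins is dictated by the entire history on and below $L_m$; there is no renewal structure, no stationarity, and no reason the conditional success probabilities should not decay summably as the discovered part of $L_{m+1}$ drifts off. A conditional Borel--Cantelli argument needs a quantitative lower bound that your sketch does not supply, and it is genuinely unclear how to get one by looking at the single BERW trajectory alone.

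The paper does not attempt this. Instead it introduces an \emph{auxiliary multi-particle system}: one particle per site of $\Z\times\{m\}$, each performing an excursion driven by the same stack of instructions and stopped on return to level $m$. This system is stationary and ergodic under horizontal shifts, and an ergodic-averaging argument (comparing the density of vertical moves contributed per particle to the density of sites visited per level) shows that this family of excursions visits every site off level $m$ infinitely often. The connection to the BERW is then made via the \emph{Abelian property} of the instruction stacks: the set of instructions used does not depend on the order in which particles are moved. On the event that the BERW is recurrent on level $m$, one can interleave the BERW path with a finite collection of these auxiliary excursions so that the BERW necessarily uses every instruction those excursions use, and hence visits every site they visit. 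This bypasses entirely the missing uniform lower bound you were seeking; the ergodic argument on the stationary system substitutes for it.
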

Note that Theorem \ref{thm:dichotomy} does not rule out the possibility that $D$ is random.  In agreement with Benjamini et al.~we conjecture that the walk is recurrent, i.e.~$\P(D=\Z^2)=1$.  A less ambitious goal would be to prove the following 0-1 law.
\begin{open}
  For the balanced excited random walk in $2=1+1$ dimensions, either $\P(D=\varnothing)=1$ or $\P(D=\Z^2)=1$.
\end{open}

Benjamini et al.~also make a conjecture about the limiting shape and growth of the \emph{range}, $\mathcal{R}_n=\{Z_0,\dots,Z_{n}\}$.  One expects that to leading order $R_n:=\#\mathcal{R}_n$ grows like $n^\alpha$ for some $\alpha\in [1/2,1]$.  These two bounds on $\alpha$ are trivial since $R_n\le n+1$, and  the projection of the walk in the direction $(1,1)$ is a 1-dimensional simple random walk (similarly for $(-1,1)$).  

\begin{open}
Prove that $\alpha$ exists, e.g.~prove that there exists $\alpha\in [1/2,1]$ such that (convergence in probability) as $n \to \infty$,
\[\dfrac{\log(R_n)}{\log n} \overset{\P} {\to}\alpha.\]
\end{open}

Rudimentary numerical analysis suggests that $\alpha \approx 0.78$.

\begin{open}
Provide a better numerical estimate of $\alpha$.
\end{open}

The list of horizontal moves of $\bs{Z}$ forms a 1-dimensional simple random walk, as does the list of vertical moves, and these two lists are independent.  The number of horizontal steps by time $n$ must grow linearly with $n$ since e.g.~any two consecutive vertical steps of opposite sign must either have a horizontal step between them or be immediately followed by a horizontal step.  Thus the ``horizontal range'' ($\max_{k\le n} Z_k\cdot (1,0)-\min_{k\le n}Z_k \cdot(1,0)$) grows like $\sqrt{n}$.  Note that the range of the walk at time $n$ is exactly one plus the number of vertical steps taken (a vertical step is taken when and only when departing from a newly visited site).  Therefore, if the growth of $R_n$ is sublinear then the number of vertical steps taken up to time $n$ is sublinear (so the vertical range is $o(\sqrt{n})$) and the limiting shape of the range will be a horizontal line.  Benjamini et al.~\cite[Page 2]{BKS11} conjecture that this is the case. Our second main result provides non-trivial lower and upper bounds on the growth of the range and in particular verifies this conjecture.

\begin{theorem}
  \label{thm:range}
%\todo{  \Maaak{check the correctness of these statements compared to what we prove}}
  As $n \to \infty$,
  \begin{itemize}
 \item[\emph{(i)}] for any sequence $a_n \to \infty$, $\P\Big(R_n> \dfrac{n a_n}{\sqrt{\log\log n}}\Big)\to 0$, and 
 \item[\emph{(ii)}]  $\P\big(R_n\ge n^{4/7}/(\log^2n)\big)\to 1$.
 \end{itemize}
\end{theorem}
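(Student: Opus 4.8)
The plan is to analyse everything through $V_n$, the number of vertical steps up to time $n$: recall that $R_n=V_n+1$ since a vertical step is taken exactly on the first departure from each site, so both bounds are statements about $V_n$. The common tool is a decomposition of the trajectory into maximal \emph{runs} of consecutive vertical steps, separated by \emph{excursions} of consecutive horizontal steps; a run begins precisely when the walk lands on a fresh site. Two structural facts drive everything. (a) Each run is monotone: if the walk reverses direction during a run it revisits the site two steps back, which forces a horizontal step and terminates the run, and the run likewise ends if it steps onto any earlier-visited site; hence, conditionally on the trajectory up to the start of the $k$-th run (call this $\F_{k-1}$), every step of that run after the first terminates it with probability at least $\tfrac12$, so the $k$-th run length $\ell_k$ satisfies $\ell_k\ge 1$, is dominated uniformly in the past by a $\Geom(\tfrac12)$ variable, and obeys $\E[\ell_k\mid\F_{k-1}]\le c_0$ for an absolute constant $c_0$. (b) With $N$ the number of runs by time $n$, $V_n=\sum_{k\le N}\ell_k$, and since each excursion has at least one step, $N\le H_n+1=n-V_n+1$. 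A Freedman-type bound for the (sub-exponential) martingale $\sum_{k\le N}(\ell_k-\E[\ell_k\mid\F_{k-1}])$ then already yields, with high probability, $V_n\le c_0(n-V_n)+O(\sqrt n\log n)$, i.e.\ $R_n\le\tfrac{c_0}{c_0+1}\,n+o(n)$. This is non-trivial but falls short of either conclusion of Theorem~\ref{thm:range}; both refinements hinge on understanding the lengths of the horizontal excursions in terms of the geometry of the visited set.

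For (i) I would track, for each height $h$, the set $B_h$ of columns at which the walk has visited height $h$; then $\sum_h|B_h|=R_n$, while the set of visited heights is the range of the independent vertical simple random walk run for $V_n$ steps, which is of order $\sqrt{R_n}$ up to iterated-logarithm corrections, so on average $|B_h|\asymp\sqrt{R_n}$. An excursion taking place at height $h$ cannot end until the walk reaches a column not currently in $B_h$, so its length is at least the distance from its starting column to the nearest endpoint or hole of $B_h$; the aim is to show this forces the average excursion length to grow, hence $N=o(n-V_n)$ and $V_n=o(n)$. The plan is then to bootstrap: a bound "$R_n\le x$ with high probability" feeds into this excursion estimate to produce "$R_n\le x/g$" for some $g>1$ depending on $n/x$, and iterating of order $\log\log n$ times, with gains that degrade slowly towards $1$ so that their product is of order $(\log\log n)^{-1/2}$, gives $\P(V_n>na_n/\sqrt{\log\log n})\to0$ for every $a_n\to\infty$.

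For (ii) I would instead bound the time $\sigma_r$ needed to make $r:=n^{4/7}/\log^2 n$ vertical steps and show $\P(\sigma_r>n)\to0$, which is exactly the claim. Here $\sigma_r=r+\sum_k W_k$, the sum running over the $\le r$ excursions preceding the $r$-th vertical step and $W_k$ being the $k$-th excursion length; moreover, whenever $\sigma_r\le n$ the horizontal walk has taken at most $n$ steps, so its range, hence every $|B_h|$, is $O(\sqrt{n\log n})$ with high probability, and each $W_k$ is at most the exit time of the horizontal simple random walk from the interval $[\min B_h,\max B_h]$, which is of order $|B_h|^2$. The plan is to bound $\sum_k W_k$ by a second-moment/martingale argument in which the intervals $B_h$ can widen only as fast as the horizontal walk's range grows, and then to optimise over the competing scales --- roughly $\sqrt r$ occupied levels, a typical interval width, an excursion cost of order $(\text{width})^2$, a total time budget $n$ --- which is what pins down the exponent $4/7$ (the logarithm providing a safety margin).

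In both parts the same step is the main obstacle: controlling the excursion lengths as the visited set grows, and in particular understanding \emph{where within $B_h$ the walk re-enters level $h$}. For (i) the danger is that the sets $B_h$ stay so fragmented --- riddled with holes --- that on each return to a level the walk is within $O(1)$ of an unvisited column, every excursion has length $O(1)$, and the range stays linear; ruling this out requires the joint law of the configuration $B_h$ and the re-entry column. For (ii) the danger is the reverse: the $W_k$ are genuinely heavy-tailed, occasionally of order $(\text{width of }B_h)^2$, so $\sum_k W_k$ is dominated by its largest terms, and one must show the walk does not too often re-enter a level deep inside a wide interval of already-visited columns. I expect this quantitative control of the re-entry position to be the crux of the whole argument.
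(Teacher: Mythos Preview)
Your proposal is a plan with explicitly acknowledged gaps, not a proof; in both parts you correctly isolate the obstacle (control of the re-entry position within the visited interval $B_h$) but do not resolve it. Let me compare with what the paper actually does.

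\medskip

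\textbf{Part (i).} The paper's route is completely different from your bootstrap scheme and, crucially, it \emph{never needs to understand the sets $B_h$ or the re-entry positions at all}. Instead it works purely with the horizontal walk $\bs X$. The key observation is that for a one-dimensional SRW, any time $s\in[0,2^k]$ lies in some dyadic interval $J\subset[0,2^k]$ on which $\bs X$ is $\eps$-slow (range $\le \eps\sqrt{|J|}$), except with probability $(1-e^{-\delta/\eps^2})^{k-O(|\log\eps|)}$. On such an interval the BERW is trapped in a rectangle of width $\eps\sqrt{|J|}$, and a separate martingale lemma (the paper's Lemma~4.5) shows that the expected number of points visited during the corresponding $\bs Z$-time interval is at most $\gamma\eps$ times its length. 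Summing over the maximal slow dyadic intervals and bounding the uncovered set gives $\E[R_t]\le Ct\bigl(\eps + (1-e^{-\delta/\eps^2})^{j}\bigr)$ with $2^j\sim t$; choosing $\eps\sim 1/\sqrt{\log j}\sim 1/\sqrt{\log\log t}$ balances the two terms. This sidesteps your fragmentation worry entirely: the argument does not care whether $B_h$ has holes, because it never looks at $B_h$. Your bootstrap idea, by contrast, would need exactly the input you flag as missing --- a lower bound on typical excursion lengths given only a bound on $R_n$ --- and I do not see how to get that without something like the paper's slow-interval machinery or an independent structural result about $B_h$.

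\medskip

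\textbf{Part (ii).} Here your outline is much closer in spirit to the paper's, but it is missing the decisive idea, and without it the calculation collapses to the trivial bound. You propose to bound each excursion $W_k$ at level $h$ by the exit time from $[\min B_h,\max B_h]$, of order $|B_h|^2$. With $r$ excursions and $|B_h|$ as large as $\sqrt{R_t\log\log R_t}$ (Kesten's local-time LIL, which the paper also uses), this gives only $t\lesssim r\cdot R_t\lesssim R_t^2$, i.e.\ $R_t\gtrsim\sqrt t$, which is the trivial bound. The paper's extra ingredient is a \emph{surplus} argument: each vertex $(x,y)$ can be an entry point to level $y$ at most $U_x^y\in\{0,1,2\}$ times (once from each of $y\pm1$), and the partial sums $\sum_{x\in I}(U_x^y-1)$ are centred binomials. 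A short combinatorial lemma (the paper's Lemma~5.4) then shows that if the entry points are listed in increasing order, the $k$-th one is within distance $M_n^y=\max_I S_I^y\sim\sqrt{n\log n}$ of an unvisited vertex when it starts. Hence the $k$-th excursion is dominated by the exit time of SRW from an interval of length $\sim|B_h|\le N_t$, started at distance $\sim\sqrt{N_t\log N_t}$ from the boundary, which has mean $\sim N_t^{3/2}\sqrt{\log N_t}$ rather than $N_t^2$. That improvement from exponent $2$ to $3/2$ is exactly what turns $R_t\gtrsim t^{1/2}$ into $R_t\gtrsim t^{4/7}$: one gets $t\lesssim R_t\cdot N_t^{3/2}\sqrt{\log N_t}\lesssim R_t^{7/4}(\log t)^{1/2}(\log\log t)^{3/4}$. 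Your proposal anticipates that ``the walk does not too often re-enter a level deep inside a wide interval'', which is precisely what the surplus bound delivers, but you have not supplied the mechanism; the Abelian reordering (so that one may assume the entries are increasing) together with the binomial fluctuation bound on $U_x^y$ is the missing step.
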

Roughly speaking, the first claim says that the range does not grow faster than $n/\sqrt{\log\log n}$, while the second says that it grows at least as fast as $n^{4/7}/\log^2 n$.  This result gives rise to the following Corollary (see also Figures \ref{fig:common_scale} and \ref{fig:components}).

\begin{corollary}
  \label{cor:BM}
  For every $T>0$ we have that $(n^{-1/2}Z_{\lfloor nt\rfloor})_{t \in [0,T]}$ converges weakly to $((B_t,0_t))_{t \in [0,T]}$ as $n \to \infty$, where $B_t$ is a standard 1-dimensional Brownian motion and $0_t=0$ for all $t$.
\end{corollary}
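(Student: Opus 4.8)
The corollary is a fairly soft consequence of Theorem~\ref{thm:range}(i) combined with Donsker's invariance principle, and I would prove it as a random time-change statement. Write $V_m$ and $H_m=m-V_m$ for the numbers of vertical and horizontal steps among the first $m$ steps of $\bs Z$; as noted above $V_m$ differs from $R_m$ by a bounded amount, and in particular $V_m\le R_m$. Reading off the increments of $\bs Z$ in order, the vertical ones form a simple random walk $(Y_j)_{j\ge0}$ on $\Z$ and the horizontal ones a simple random walk $(X_j)_{j\ge0}$, and $Z_m\cdot(1,0)=X_{H_m}$, $Z_m\cdot(0,1)=Y_{V_m}$. The first step is to feed $a_n=\eps\sqrt{\log\log n}$ (for each fixed $\eps>0$) into Theorem~\ref{thm:range}(i) to get $R_n/n\overset{\P}{\to}0$; since $m\mapsto V_m$ is nondecreasing, monotonicity over $m\le nT$ then upgrades this to
\[
\sup_{t\in[0,T]}\frac{V_{\floor{nt}}}{n}\overset{\P}{\to}0
\qquad\text{and}\qquad
\sup_{t\in[0,T]}\Big|\frac{H_{\floor{nt}}}{n}-t\Big|\overset{\P}{\to}0 .
\]

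Next I would show the vertical coordinate is negligible. By monotonicity of $V$, $\sup_{t\le T}|Y_{V_{\floor{nt}}}-Y_0|\le\max_{0\le j\le V_{\floor{nT}}}|Y_j-Y_0|$; on the event $\{V_{\floor{nT}}\le\delta n\}$, whose probability tends to $1$ for each fixed $\delta>0$, Doob's $L^2$-maximal inequality bounds $\P\big(\max_{j\le\delta n}|Y_j-Y_0|>\eta\sqrt n\big)$ by $\delta/\eta^2$. Letting first $n\to\infty$ and then $\delta\downarrow0$ gives $\sup_{t\in[0,T]}n^{-1/2}\big|Z_{\floor{nt}}\cdot(0,1)\big|\overset{\P}{\to}0$.

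For the horizontal coordinate I would use Donsker's theorem for $(X_j)$: the linear interpolation of $\big(n^{-1/2}(X_{\floor{ns}}-X_0)\big)_{s\in[0,T]}$ converges weakly in $C([0,T],\R)$ to a standard Brownian motion $W$. Evaluating it at the nondecreasing random clock $\Lambda_n(t):=H_{\floor{nt}}/n\in[0,T]$ reproduces $n^{-1/2}(X_{H_{\floor{nt}}}-X_0)$ exactly (as $H_{\floor{nt}}$ is an integer, $\Lambda_n(t)$ is an interpolation node). Since $\Lambda_n\to\mathrm{id}_{[0,T]}$ uniformly in probability, the pair (interpolated walk, $\Lambda_n$) converges weakly to $(W,\mathrm{id})$ --- weak convergence of the first coordinate together with convergence in probability of the second to a deterministic limit yields joint convergence --- and the composition map $(w,\lambda)\mapsto w\circ\lambda$ is continuous at every pair with $w$ continuous. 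The continuous mapping theorem then gives $n^{-1/2}(X_{H_{\floor{nt}}}-X_0)\cweak W$.

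It then remains to assemble the coordinates: $n^{-1/2}Z_{\floor{nt}}$ equals $\big(n^{-1/2}(X_{H_{\floor{nt}}}-X_0),\,n^{-1/2}(Y_{V_{\floor{nt}}}-Y_0)\big)$ plus the deterministic vanishing term $n^{-1/2}(X_0,Y_0)$; the first coordinate converges weakly to $B$ and the second to the constant path $0$ uniformly in probability, so the pair converges weakly to $((B_t,0))_{t\in[0,T]}$ in $D([0,T],\R^2)$, equivalently in $C([0,T],\R^2)$ since the limit is continuous. The only delicate point --- and the reason this is not purely formal --- is that the clock $H_{\floor{n\cdot}}$ is \emph{not} independent of $X$ (whether step $k$ is horizontal is determined by the past trajectory, hence by past increments of both $X$ and $Y$), so one cannot treat it as an independent time change; but this dependence is harmless once one knows $\Lambda_n$ converges to a deterministic limit, so essentially all the real work is already done in Theorem~\ref{thm:range}(i).
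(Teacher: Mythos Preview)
Your proof is correct and follows precisely the route the paper indicates: the paper leaves \cref{cor:BM} as an exercise, pointing to \cref{thm:range}(i) together with the observation (made just before that theorem) that the horizontal and vertical increment sequences are each simple random walks and that sublinear growth of $R_n$ forces the number of vertical steps to be $o(n)$. Your time-change argument via Donsker for the horizontal coordinate and the Doob bound for the vertical coordinate is exactly the intended fleshing-out, and your care about the non-independence of the clock $\Lambda_n$ and $\bs X$---resolved by the deterministic limit of $\Lambda_n$---addresses the only genuinely nontrivial point.
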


Neither of the bounds in Theorem \ref{thm:range} are very close to our crude numerical estimate $\alpha\approx 0.78$.  It would be of interest to improve either of these bounds, e.g., by proving resolving one or both of the following.
\begin{open}
Prove that for some $\overline{\alpha}<1$ one has:  
\[\P(R_n> n^{\overline{\alpha}})\to 0, \text{ as }n \to \infty.\]
\end{open}
\begin{open}
Prove that for some $\underline{\alpha}>4/7$ one has:  
\[\P\big(R_n\ge n^{\underline{\alpha}}
%/(\log^2n)
\big)\to 1, \text{ as }n \to \infty.\]
\end{open}

\begin{figure}
\begin{center}
\includegraphics[scale=0.2]{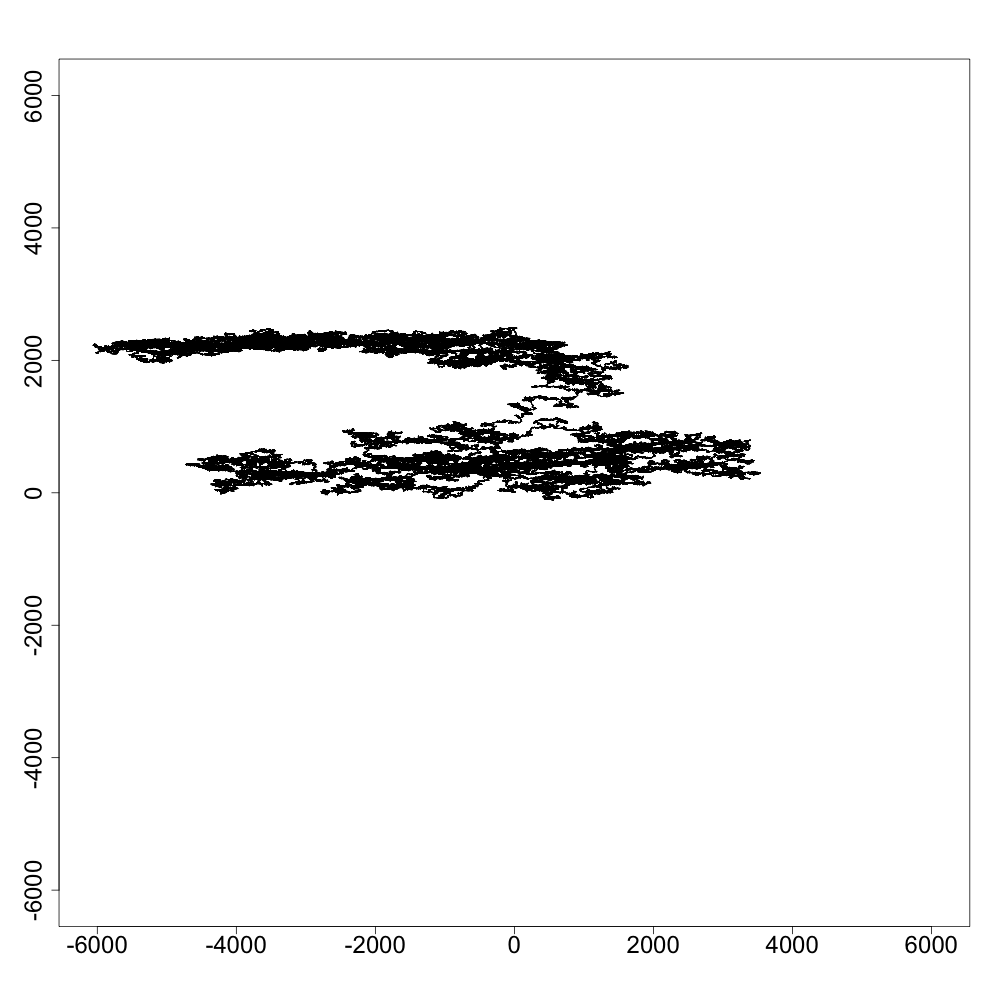}
\end{center}
%\vspace{-1cm}
\caption{A simulation of the path of the 2-dimensional balanced excited random walk, up to time $n=10^8$ with common horizontal and vertical scale.}
\label{fig:common_scale}
\end{figure}

\begin{figure}
\begin{center}
\includegraphics[scale=0.25]{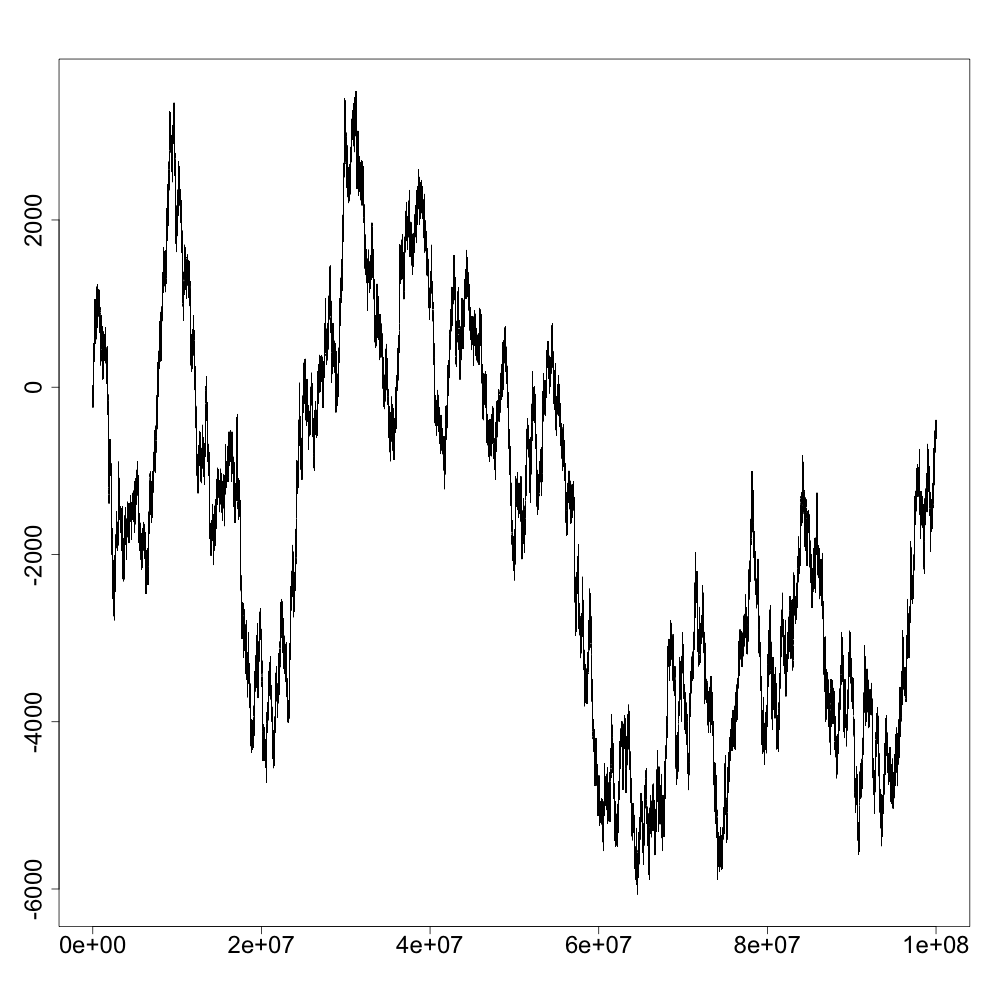}

\includegraphics[scale=0.25]{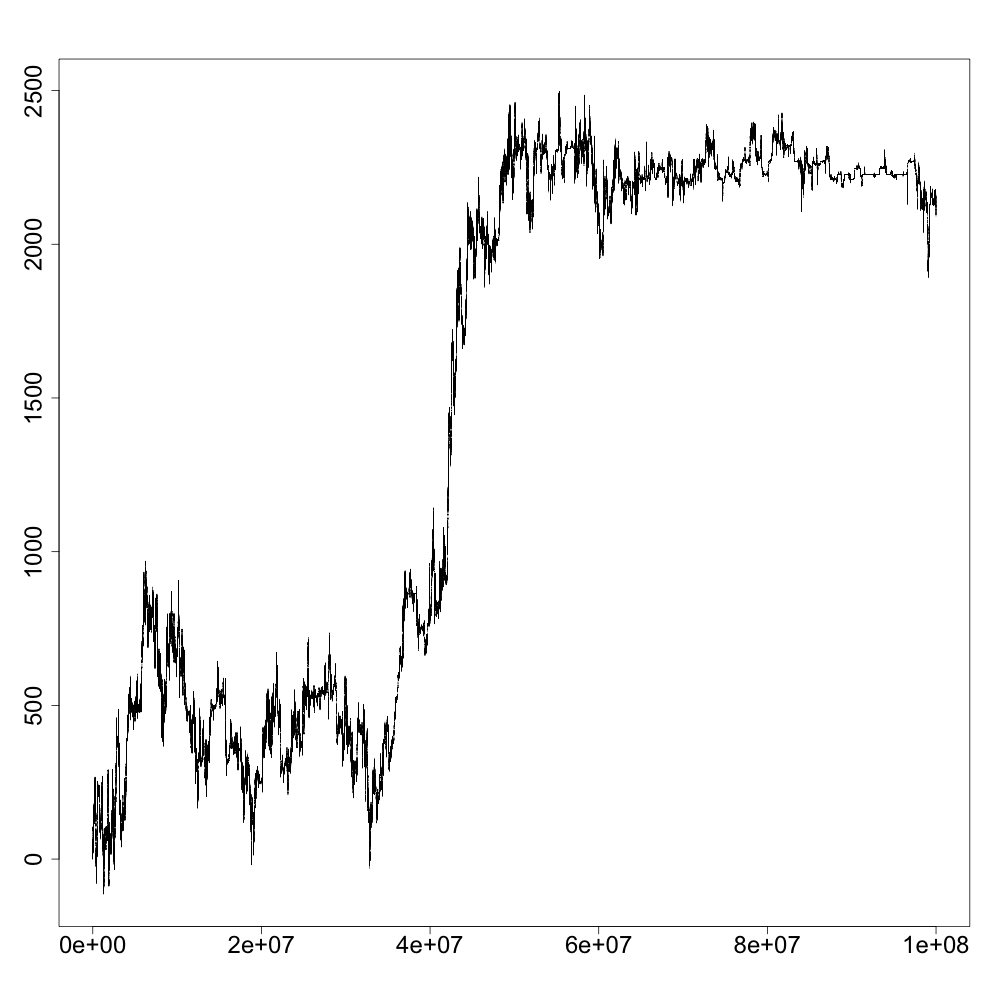}
\end{center}
%\vspace{-1cm}
\caption{A simulation of the horizontal (top) and vertical (bottom) displacement over time, up to time $n=10^8$.}
\label{fig:components}
\end{figure}

\subsection*{Organisation}
The remainder of this paper is organised as follows.  In Section \ref{sec:Abelian} we introduce an important tool for our analysis, namely that of stacks of instructions (one stack per site) that represent the moves that a walker would take from each site.  We show that multiple particles moving according to such a set of instructions satisfy a certain Abelian property -  the set of instructions used by the particles does not depend on the order in which the particles are moved.  In Section \ref{sec:recurrence} we use this Abelian property to verify Theorem \ref{thm:dichotomy}.  In Section \ref{sec:upper} we prove Theorem \ref{thm:range}(i).  In Section \ref{sec:LB} we prove Theorem \ref{thm:range}(ii).  The proof of Corollary \ref{cor:BM} is left as an exercise using Theorem \ref{thm:range}(i) and the approach indicated prior to that theorem.

\section{Abelian Property}
\label{sec:Abelian}
In what follows for every $m\ge 1$, we define the set $[m]=\{1,2,\ldots,m\}$.
A set of $\Z^2$ instructions is a collection $\bs{I}=(I(x,k))_{x \in \Z^2,k\in \N}$ where each $I(x,k)$ is either a unit coordinate vector $e\in \{\pm e_i:i \in \{1,2\}\}$ or a unit vector together with an instruction to cease movement.

Let $\bs{P}=(P_i)_{i \in J}$ be particles started at locations $(x_i)_{i \in J}$ (not necessarily unique), where $J$ is finite or countable.  A movement list $L=(L_i)_{i \in \N}$ for $\bs{P}$ is a sequence with each $L_i\in J$ such that every $j\in J$ appears infinitely often in the list. A movement list $L$ together with a set of instructions $\bs{I}$ defines a collection of paths associated to the particles as follows:

The first move is made by particle $P_{L_1}$.  It follows the instruction $I(x_{L_1},1)$.  Having made the first $k$ (attempted) moves of the particles, we next invite particle $P_{L_{k+1}}$ to move.  If it has not already been ceased by an instruction that it previously used, it follows the first instruction at its current location that has not already been followed.  If it has already been ceased then no instruction is used.  If a particle uses an instruction that consists of a unit vector together with a cease statement, then the particle first takes the step indicated by the unit vector and then ceases.

Let $\bs{I}(L)$ denote the collection consisting of the  instructions used by $L$.

\begin{prop}
  \label{prop:abelian1}
  Let $\bs{I}$ be a set of $\Z^2$ instructions and let $L$ and $L'$ be two movement lists for $\bs{P}$.  Then $\bs{I}(L)=\bs{I}(L')$.
\end{prop}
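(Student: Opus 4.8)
The plan is to prove this by a standard "diamond lemma" / local-confluence argument for abstract rewriting, adapted to the setting of particles consuming instructions from stacks. The key observation is that the state of the system after any finite number of moves can be encoded by a pair: a function $c\colon \Z^2 \to \Z_+$ recording how many instructions have been consumed at each site (equivalently, which instructions have been used so far), together with the current positions and cease-status of all particles in $\bs P$. Call such a pair a \emph{configuration}. A single legal move (inviting some particle $P_j$ that has not ceased, having it consume the next unused instruction at its location, move, and possibly cease) is a transition between configurations; I will write $\kappa \to_j \kappa'$. The goal is to show that all maximal (infinite, fair) sequences of such transitions starting from the initial configuration $\kappa_0$ consume exactly the same set of instructions, i.e.\ yield the same limiting $c_\infty$.

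First I would establish a \textbf{local commutation (diamond) property}: if $P_i$ and $P_j$ are both legal to move in configuration $\kappa$ with $i \ne j$, and $\kappa \to_i \kappa_1$ and $\kappa \to_j \kappa_2$, then there is a common configuration $\kappa_3$ with $\kappa_1 \to_j \kappa_3$ and $\kappa_2 \to_i \kappa_3$. This is immediate when $P_i$ and $P_j$ sit at different sites, since then the two moves touch disjoint stacks and independent particle states. When they sit at the same site $x$, the two moves consume the two instructions $I(x,c(x)+1)$ and $I(x,c(x)+2)$ in the two possible orders, and in either order the resulting $c$ agrees ($c(x) \mapsto c(x)+2$) and each of $P_i,P_j$ ends up having followed one of those two instructions — the same one regardless of order — so the particle states also agree. (If one of these instructions carries a cease, the affected particle ceases in both orders.) The only subtlety is that after $P_i$ moves, $P_j$ might no longer be "legal" — but it cannot have ceased, because $P_j$ did not move in the step $\kappa \to_i \kappa_1$; so $\kappa_1 \to_j \kappa_3$ is still a valid transition. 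Note this single-step diamond is all we need; there is no issue of a move creating \emph{new} legal moves that destroy confluence, since the set of particles is fixed.

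Next I would promote the one-step diamond to a statement about finite prefixes: if $\sigma$ and $\tau$ are two finite legal transition sequences from $\kappa_0$ to configurations $\kappa,\kappa'$ respectively, and if every particle that is legal to move in $\kappa$ (resp.\ $\kappa'$) has been invited at least as often along $\tau$ (resp.\ $\sigma$)... — more cleanly, I would argue as follows. Fix the two movement lists $L,L'$. For each $N$, let $\kappa_N$ be the configuration after the first $N$ moves of $L$. Using the diamond property repeatedly (a routine induction on the number of "disagreements", exactly as in the proof that two reductions to normal form in a locally confluent terminating system agree — except here there is no termination, so one argues at the level of finite prefixes), I would show: for every $N$ there exists $M=M(N)$ such that the set of instructions used in the first $N$ moves of $L$ is contained in the set used in the first $M$ moves of $L'$. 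The input to this is the \emph{fairness} of $L'$: since every $j \in J$ appears infinitely often in $L'$, one can always extend a prefix of $L'$ to "catch up" with any particular finite collection of moves dictated by $L$. Taking $N \to \infty$ gives $\bs I(L) \subseteq \bs I(L')$, and the reverse inclusion is symmetric, so $\bs I(L) = \bs I(L')$.

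The step I expect to be the main obstacle is this last bookkeeping step — making precise the induction by which one "rearranges" a prefix of $L'$ so that it performs, among others, all the moves in a given prefix of $L$. The difficulty is that a prefix of $L'$ is not literally a permutation of a prefix of $L$ (particles consume their instructions at different rates along the two lists), so one must track not permutations of move-sequences but the partial order on instruction-consumption events that the diamond property induces, and check that fairness lets one realize any down-closed finite set of such events as an initial segment (up to reordering) of $L'$. Once the right statement is isolated, the proof is a clean induction on $N$; the care is entirely in the formulation. Everything else — the single-step diamond, and the passage to the limit — is routine.
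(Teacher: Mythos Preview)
Your diamond step is incorrect when two distinct non-ceased particles $P_i,P_j$ occupy the same site $x$. If you move $P_i$ first, it consumes $I(x,c(x)+1)$ and $P_j$ then consumes $I(x,c(x)+2)$; if you move $P_j$ first, the assignment swaps. The resulting $c$ agrees, and the \emph{multiset} of (position, cease-status) pairs agrees, but the labelled configuration does not: $P_i$ and $P_j$ have exchanged destinations. Your sentence ``the same one regardless of order --- so the particle states also agree'' is simply false. Once the labelled configurations differ, the subsequent moves dictated by $L$ and $L'$ (which refer to particles by name) diverge, and the induction you sketch does not go through. Passing to unlabelled configurations does not obviously help either, since the movement lists are phrased in terms of particle labels, not sites.

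The paper avoids this entirely with a two-line minimal-counterexample argument that ignores particle identities. Assume $\bs I(L)\not\subset\bs I(L')$ and take the chronologically first instruction $I(x,k)$ used by $L$ but never by $L'$. Every instruction used by $L$ strictly before this one is used by $L'$; in particular, every instruction that brought a non-ceased particle into $x$ (from a neighbour, or as an initial position) is used by $L'$. Hence under $L'$ at least $k$ non-ceased particles arrive at $x$, so $I(x,k)$ is used by $L'$ after all --- contradiction. The point is that the number of arrivals at $x$ is a function of the \emph{set} of instructions used so far (at neighbours of $x$ and at initial positions), not of which particle used which instruction, so the whole particle-tracking apparatus is unnecessary.
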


\begin{proof}
Suppose not.  Wolog $\bs{I}(L) \nsubseteq \bs{I}(L')$, and we let $I(x,k)$ be the first (in chronological order) instruction used by $L$ that is never used by $L'$.  Then $I(x,k)\ne I(x_i,1)$ for any $i$ since the first instruction at each $x_i$ is automatically followed for any $L$.  Let $(I(y_i,j_i))_{i \in A}$ denote the set of instructions used by $L$ up to but not including the instruction $I(x,k)$.  This includes all non-ceasing  instructions used by $L$ to move from neighbours of $x$ to $x$, up to but not including the time of the $k$th departure from $x$ for $L$.  Since these instructions all appear in $L'$ it follows that $L'$ must also have non-stopped particles visit $x$ at least $k$ times, so $I(x,k)$ must also be used by $L'$, which gives a contradiction.
\end{proof}

\begin{lemma}
\label{lem:mono0}
Suppose that $L$ is a movement list for a set of particles $\bs{P}$, and that $L'$ is a movement list for a set of particles $\bs{P}'\supset \bs{P}$ (i.e.~add some extra particles).  Then $\bs{I}(L)\subset \bs{I}(L')$.
\end{lemma}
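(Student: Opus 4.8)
The plan is to reduce the statement to Proposition \ref{prop:abelian1} by constructing a single movement list that we can analyze from two perspectives. The key observation is that $\bs{I}(L)$ does not depend on which movement list $L$ we use for $\bs{P}$, so we are free to choose a convenient one. Concretely, I would build a movement list $L''$ for the larger particle system $\bs{P}'$ with the property that, when restricted to the moves of particles in $\bs{P}\subset\bs{P}'$, it induces a valid movement list for $\bs{P}$, and moreover the extra particles in $\bs{P}'\setminus\bs{P}$ do all of their (infinitely many) moves \emph{after} a stage at which every instruction that $L$ would ever use has already been used.

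Here are the steps. First, fix any movement list $L$ for $\bs{P}$ and any movement list $L'$ for $\bs{P}'$; by Proposition \ref{prop:abelian1} it suffices to show $\bs{I}(L)\subset \bs{I}(L')$ for these particular choices, or for any convenient ones. Second, interleave as follows: run the particles of $\bs{P}$ according to $L$, but after each step also allow each extra particle in $\bs{P}'\setminus\bs{P}$ to be invited to move only on a sparse subsequence of times — e.g.\ give the extra particles their $m$th round of moves only after the first $f(m)$ entries of $L$ have been processed, for a sufficiently fast-growing $f$; this guarantees every particle of $\bs{P}'$ appears infinitely often, so $L''$ is a genuine movement list for $\bs{P}'$. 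Third, observe that the moves made by the $\bs{P}$-particles under $L''$ are \emph{exactly} the moves they make under $L$: when a $\bs{P}$-particle is invited to move in $L''$, it is at the same location and has the same history as under $L$, because the extra particles, even though they may have altered the stacks at some sites by using instructions there, only ever \emph{consume} instructions — and the key point is that a $\bs{P}$-particle at site $x$ always follows ``the first instruction at $x$ not yet used,'' so the set of instructions it uses is determined purely by how many times $\bs{P}$-particles themselves have departed from $x$, which is the same count in both $L$ and $L''$. Hence $\bs{I}(L)\subset \bs{I}(L'')$, since $\bs{I}(L'')$ additionally contains whatever instructions the extra particles use. Fourth, apply Proposition \ref{prop:abelian1} to $\bs{P}'$: $\bs{I}(L'')=\bs{I}(L')$. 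Combining, $\bs{I}(L)\subset\bs{I}(L'')=\bs{I}(L')$.

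I expect the main obstacle to be making the third step fully rigorous, i.e.\ justifying that the extra particles do not disturb the trajectories of the original particles. The subtlety is that an extra particle passing through site $x$ consumes instructions from the stack at $x$, so when a $\bs{P}$-particle next departs from $x$ it does \emph{not} literally follow the same instruction vector as it would under $L$ — but it does make the same \emph{number} of visits to each site, and it is the count of departures (not the labels of the consumed instructions) that matters for deciding where the walk goes next, once we track things at the level of ``which visit is this.'' The cleanest way to handle this is probably the monotonicity/comparison argument already implicit in the proof of Proposition \ref{prop:abelian1}: run $L''$, let $I(x,k)$ be the chronologically first instruction used by $L$ but (suppose for contradiction) never used by $L''$; all earlier-used instructions of $L$ appear in $L''$, so under $L''$ the $\bs{P}$-particles still visit $x$ at least $k$ times, forcing some non-ceased particle to make a $k$th departure from $x$ in $L''$ and hence $I(x,k)\in\bs{I}(L'')$ — a contradiction. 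With this framing, Step 3 becomes a verbatim repetition of the argument for Proposition \ref{prop:abelian1}, now applied to the mixed list $L''$, and adding extra particles can only \emph{increase} visit counts, never decrease them, which is exactly the direction we need.
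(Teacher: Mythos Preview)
Your proposal is correct in the end, but it takes a more laborious route than the paper, and you miss a simplifying observation that makes the whole interference issue disappear.

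The paper's proof works one instruction at a time. Given any $I(x,n)\in\bs{I}(L)$, it is used at some \emph{finite} position $j$ in $L$. One then builds a movement list for $\bs{P}'$ whose first $j$ entries are exactly $L_1,\dots,L_j$ (valid, since these are all labels of particles in $\bs{P}\subset\bs{P}'$), extended arbitrarily thereafter so that every particle of $\bs{P}'$ appears infinitely often. During those first $j$ steps no extra particle moves at all, so the $\bs{P}$-particles behave identically to how they do under $L$, and $I(x,n)$ is used. Proposition~\ref{prop:abelian1} then identifies $\bs{I}$ of this tailor-made list with $\bs{I}(L')$.

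Your approach instead tries to build a single global interleaving $L''$ that handles all instructions simultaneously; this forces you to confront the interference problem you correctly diagnose, and you then fall back on rerunning the contradiction argument from Proposition~\ref{prop:abelian1}. That fallback works (and in fact would apply directly to compare $L$ with $L'$, making $L''$ unnecessary), but note a small imprecision: in your Step~3 conclusion it is not that ``the $\bs{P}$-particles still visit $x$ at least $k$ times'' under $L''$, but rather that \emph{some} non-ceased particles of $\bs{P}'$ do --- the instructions bringing particles to $x$ may now be consumed by extra particles. This does not affect the conclusion, since the $k$th departure from $x$ still occurs. The paper's per-instruction trick avoids this entire discussion by simply not moving the extra particles until after the instruction in question has been used.
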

\begin{proof}
Let $I(x,n)$ be an instruction used by $L$.  Then it is used at some finite position $j$ in that list.  By Proposition \ref{prop:abelian1} we can realise $\bs{I}(L')$ by moving the particles in $\bs{P}'$ in any order.  So consider the ordering that first uses $L_1,L_2,\dots, L_j$ and then continues in any order thereafter.  The instruction $I(x,n)$ is clearly used by this list.
\end{proof}
\blank{
Given a set of particles $\bp$, a set of instructions $\bs{I}$ and a movement list $L$ for $\bp$, let $M_{\bp}(x,L,\bs{I})$ denote the number of instructions ever used at $x$ by these particles.  It is immediate from Lemma \ref{lem:mono0} that 
\begin{align}
M_{\bp}(x,L,\bs{I}) \quad \text{ does not depend on $L$}. \label{rem:not_depend}
\end{align}   }
Consider a set $\bs{\Lambda}=(\Lambda_z)_{z \in \Z^2}$ of independent rate-one Poisson clocks.  A sequence of firings $(V_i,T_i)_{i \in I}$ where $I=\N$ or $I=[n]$,  and for each $i$, $(V_i,T_i) \in \Z^2\times \R_+$, $T_{i+1}\le T_i$ and $|V_{i+1}-V_i|=1$, is called a \emph{descending chain}.  If $I=[n]$ then the (finite) sequence is called a \emph{descending chain of length $n$}.   Let $B(z,r)$ denote the ball of radius $r$ (graph distance) centred at $z$.   
\begin{lemma}
\label{lem:chains}
For each $z\in \Z^2$ and $t\ge 0$ there exists $R=R(z,t)$ such that there are no infinite or finite descending chains $(V_i,T_i)_{i \in [n]}$ with $n \in \N$ such that  $V_1\sim z$, $T_1\le t$ and $V_n\notin B(z,R)$.
\end{lemma}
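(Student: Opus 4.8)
The plan is to show that long descending chains are very unlikely, so unlikely that by Borel--Cantelli only finitely many exist that start near $z$ before time $t$, which forces the existence of a (random but a.s.\ finite) radius $R(z,t)$ containing all of them. First I would fix $z$ and $t$ and, for $r\in\N$, consider the event $A_r$ that there exists a descending chain $(V_i,T_i)_{i\in[r]}$ of length $r$ with $V_1\sim z$ and $T_1\le t$. (A chain reaching outside $B(z,R)$ must, in particular, contain a sub-chain of length $R$ starting adjacent to $z$, since $|V_{i+1}-V_i|=1$ and $V_1\sim z$ means it takes at least $R-1$ further steps to exit $B(z,R)$; so it suffices to bound $\P(A_r)$.) The key estimate is that $\P(A_r)$ decays superexponentially in $r$.

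The main step is the combinatorial/probabilistic bound on $\P(A_r)$. A descending chain of length $r$ is specified by a nearest-neighbour path $z\sim V_1, V_2,\dots, V_r$ in $\Z^2$ (at most $4\cdot 3^{r-1}$ choices, since $V_1$ has $4$ choices as a neighbour of $z$ and each subsequent vertex has at most $3$ new neighbours — actually $4$, but the precise constant is irrelevant) together with a choice, for each $i$, of a firing time $T_i$ of the Poisson clock $\Lambda_{V_i}$, subject to $t\ge T_1\ge T_2\ge\dots\ge T_r$. Conditioning on the clock rings, the probability that independent rate-one Poisson processes on the sites $V_1,\dots,V_r$ admit one ring each arranged in decreasing order within $[0,t]$ is bounded, after summing over the Poisson counts, by something like $t^r/r!$ (the volume of the ordered simplex), up to the contribution of having at least one ring at each site. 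More carefully: the expected number of decreasing $r$-tuples of rings $(T_1,\dots,T_r)$ with $T_1\le t$, one from each of $r$ fixed independent rate-one Poisson processes, equals $t^r/r!$ by the multivariate Mecke/Campbell formula. Hence
\[
\P(A_r)\;\le\; \E[\#\text{length-}r\text{ descending chains from a neighbour of }z\text{ with }T_1\le t]\;\le\; 4\cdot 3^{r-1}\cdot\frac{t^r}{r!}.
\]
Since $\sum_r 3^{r-1}t^r/r!<\infty$, Borel--Cantelli gives that a.s.\ only finitely many of the events $A_r$ occur; equivalently, there is a (random) $r_0(z,t)<\infty$ such that no descending chain of length $\ge r_0$ with $V_1\sim z$, $T_1\le t$ exists. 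Taking $R(z,t)=r_0(z,t)$ then does the job: any chain with $V_n\notin B(z,R)$ would have length $\ge R\ge r_0$, contradiction.

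I expect the only real subtlety to be the clean passage from ``expected number of decreasing tuples of rings from $r$ fixed Poisson processes on $[0,t]$'' to the bound $t^r/r!$, i.e.\ correctly applying the Mecke formula (or simply integrating the joint density of the ordered clock rings over the simplex $\{t\ge s_1\ge\dots\ge s_r\ge 0\}$ and summing the independent site contributions), and making sure the union over the exponentially many nearest-neighbour paths is handled by linearity of expectation rather than a crude union bound that loses the factorial. Everything else — the reduction from ``exits $B(z,R)$'' to ``has length $\ge R$'', and the Borel--Cantelli conclusion — is routine. A remark worth including is that the same argument shows $R(z,t)$ can be taken monotone in $t$ and that $\E[e^{c R(z,t)}]<\infty$ for suitable $c$, which may be convenient later, though it is not needed for the statement as phrased.
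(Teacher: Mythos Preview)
Your proposal is correct and is essentially the paper's own argument: bound the expected number of length-$r$ descending chains starting adjacent to $z$ before time $t$ by $(Ct)^r/r!$ and conclude by Borel--Cantelli. The paper obtains the $(4t)^r/r!$ bound by citing \cite[Lemma~4]{compass} after first reducing to chains with \emph{distinct} vertices (so that the $r$ Poisson clocks really are independent, cleaning up the one step you flagged as the main subtlety); otherwise the structure is identical.
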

\begin{proof}
Fix $z_1\sim z$ and suppose that there is such a descending chain, then there is one (for some $n'\le n$) where all $V_i$ are distinct, and therefore the chain itself is of length at least $R-1$.
The proof of \cite[Lemma 4]{compass} shows that the expected number of descending chains of length $n$ started from $z$ with all vertices distinct is at most $(4t)^n/(n!)$.  Since any chain of length $k$ contains a chain of length $\ell<k$ the probability that there exists a descending chain of length $\ge n$, starting from $(z,t)$ is at most $(4t)^n/(n!)$.  Thus letting $A_n$ be the event that there is such a descending chain of length $n$ we have 
\begin{align}
\P(\cap_{m=1}^\infty \cup_{n=m}^\infty A_n)\le \P( \cup_{n=m}^\infty A_n)=\P(A_m)\to 0.
\end{align}
In other words $\P(\cup_{r=1}^\infty \cap_{n=r}^\infty A_n^c)=1$, which completes the proof.
\end{proof}
\blank{
Let $\bs{P}$ be a set of particles, with initial locations in  $\Z^2$.  Let $\bs{I}$ be a set of instructions, and $\bs{\Lambda}$ a Poisson point process as above.
%, and let $(U_{x,i})_{x\in \Z^2,i \in \N}$ be i.i.d.~standard uniform random variables.  
Within this setup the particles will undergo continuous-time walks (that may or may not be stopped at some point).  If a clock at $z$ rings for the $k$th time at time $t$ and there is at least one non-ceased particle at $z$ at time $t-$, then one such particle follows the instruction $I(z,k)$.  Sometimes we will wish to follow the positions of individual particles (rather than just the number of particles per site) and in this case we also suppose that we are given a tie-breaking rule that tells us which particle moves whenever a clock rings at a site containing more than one non-ceased particle.  

For a subset of particles $\bq\subset \bp$ let $N_{\bq}(z,t,\bs{I})$ denote the number of instructions read/used at $z$ up to time $t$ by the same process (i.e.~same Poisson process and same $\bs{I}$) but with just the subset of particles $\bq$.  Note that the quantities $N_{\bq}(z,t,\bs{I})$ do not depend on any tie-breaking rule.  We will often drop the $\bs{I}$ from the notation when there is no ambiguity.

\begin{lemma}
\label{lem:mono1}
Fix $\bs{I}$ and let $\bq\subset \bp$ be finite.  If instruction $I(y,n)$ is read at time $T$ by the $\bq$ process then it is read at some time $t\le T$ by the  $\bp$ process.  Consequently $N_{\bq}(z,t)\le N_{\bp}(z,t)$ for every $t,z$.
\end{lemma}
\begin{proof}
Suppose not.  Let $(X_0,T)$ and $I(X_0,n)$ be the firing and instruction corresponding to the first time for the $\bq$ process that this fails (since $\bq$ is finite this is well defined).  Since all previous instructions followed by the $\bq$ process have also been followed by the $\bp$ process up until this time, the $\bp$ process has already used $I(X_0,m)$ for all $m<n$ and either it has used $I(X_0,n)$ (which gives a contradiction), or it still has a non-ceased particle there at time $T-$ in which case it reads this instruction at time $T$. Either way this contradicts the definition of $(X_0,T,n)$.

The second claim follows from the fact that every instruction used by the $\bq$ process up to time $t$ is also used by the $\bp$ process up to time $t$.
\end{proof}

If $\bp$ are particles started from points in $A\subset \Z^2$, and $B\subset \Z^2$, let $N_B(x,t)=N_{\bp_B}(x,t)$, where $\bp_B$ is the subset of $\bp$-particles started from points in $B$. 
\begin{lemma}
\label{lem:Radius}
Fix $\bs{I}$.  Let $x\in \Z^2$, $t\ge 0$ and $R=R(x,t)$ as in Lemma \ref{lem:chains}.  Then $N_A(x,t)=N_{A \cap B(x,R)}(x,t)$.
\end{lemma}
\begin{proof}
Consider the two processes with particles $\bp$ started from points in $A$ and $\bp_B$ the subset of those started from points in $B=A\cap B(x,R)$ respectively.  We already know that $N_A(x,t)\ge N_{B}(x,t)$.  Suppose that $N_A(x,t)>N_{B}(x,t)$.  Then an extra instruction $I(x,n)$ 
%$I(x_1,n_1)$ at some $x_1\sim x_0$ 
was used by the $\bp_A$ process at some time $T_1< t$.  Let $x_0=x$. 

Having identified that an extra instruction $I(x_i,n_i)$ was used at some $x_i$ 
%\sim x_{i-1}$
 at some time $T_i$ it must be that $x_i$ was visited an extra time by the $\bp_A$ process, and therefore the $\bp_A$ process  must have used an extra instruction  $I(x_{i+1},n_{i+1})$ from one of the neighbours $x_{i+1}\sim x_i$ at some time   %It follows that some extra instruction $I(x_{i+1},n_{i+1})$ was used by the $A$ process at some time 
 $T_{i+1}<T_i$.  Let $i_0$ be the largest $i$ such that $x_{i}\in B(x,R)$.  Since there are only finitely many firings in $B(x,R)$ up to time $t$ we have that $i_0$ is finite and $T_{i_0}>0$.  The same argument above says that an extra instruction was used at some $x_{i_0+1}\sim x_{i_0}$ at some time $T_{i_0+1}<T_{i_0}$ and by assumption $x_{i_0+1}\notin B(x,R)$.  This means we have a descending chain from $x_1\sim x$ to a vertex outside of $B(x,R)$, which contradicts the definition of $R$.
\end{proof}
\begin{lemma}
The conclusion of Lemma \ref{lem:mono1} holds also if $\bq\subset \bp$ is infinite.
\end{lemma}
\begin{proof}
Let $R=R(x,T)$ be determined by the Poisson process $\bs{\Lambda}$.
Since $\bq$ is finite the set $B\subset A$ of starting points of these $\bq$ particles is also finite.  
Suppose that instruction $I(x,n)$ is read at a time $T$ by the $\bq$ process.  From Lemmas \ref{lem:Radius} and \ref{lem:mono1} we have 
\[N_{B}(x,t)=N_{B\cap B(x,R)}(x,t)\le N_{A\cap B(x,R)}(x,t)=N_A(x,t).\]
\end{proof}

We have not shown for $\bs{\Lambda}$ and a given tie-breaker rule, that the ordered list of movers in the continuous-time process described above is a movement list (in principle one could end up with a list in which some particles appear only finitely often).  Nevertheless, we have the following:
\begin{lemma}
Fix $\bs{I}$.  Consider a set of particles $\bp$ starting from points in $A\subset \Z^2$ (finitely many particles per site).   Then for any movement list $L$ for $\bp$, almost surely (with respect to $\bs{\Lambda}$) we have 
\[M_A(x,L,\bs{I})=N_A(x,\infty,\bs{I}).\]
\end{lemma}
\begin{proof}
Let $I(x,n)$ denote an instruction used by the continuous-time process of $\bp$ particles.  Then it is used by some finite time $t$.  By Lemma \ref{lem:chains} and the fact that there is a finite number of firings at each site up to time $t$ a.s.~we have that the number of instructions $N_A(x,t)$ used at $x$ by the continuous-time process up to time $t$ is finite, and by Lemma \ref{lem:Radius} we have that $N_{A}(x,t)=N_{A'}(x,t)$, where $A'=A\cap B(x,R)$ (and $R=R(x,t)$ depends on $\bs{\Lambda}$) contains only a finite number $k$ of particles.  
%Also $N_{A'}(x,t)\le N_{A'}(x,\infty)\le N_A(x,\infty)$ by Lemma \ref{lem:mono1}.  

Let $L'$ be any  movement list defined from the $\bp_{A'}$ process, as determined by the Poisson firings (and a tie-breaker rule that guarantees that we create a movement list, e.g.~when we have a choice, always choose to move the particle whose time since previous move is longest).  Since $L$ is a list of moves for all particles in $A$ we have by Lemma \ref{lem:mono0} that every instruction used by the $\bp_{A'}$ process appears in $\bs{I}(L)$.   In other words, $N_{A'}(x,\infty)\le M_A(x,L)$.  This shows that for any $x,t$,
\[N_A(x,t)=N_{A'}(x,t)\le N_{A'}(x,\infty)\le M_A(x,L),\]
and therefore $N_A(x,\infty)\le M_A(x,L)$.

Now let $I(x,n)$ be an instruction read by $L$ at some position $j$ in the list.  Let $P_1,\dots, P_m$ denote the set of particles that appear in the finite portion $L_1,\dots, L_j$ of the list $L$.  Note that any movement list $L^*$ for $\bp^*=\{P_1,\dots, P_m\}$  uses the instruction $I(x,n)$ because (by Proposition \ref{prop:abelian1}) it in particular uses the same instructions as any such  movement list $\hat{L}$ whose first $j$ entries match $L$.  Thus if we let $L^*$ be the list of moves defined by the Poisson process $\bs{\Lambda}$ (and a fixed tie-breaker rule as above) with the set of particles $\bp^*$, then $I(x,n)\in \bs{I}(L^*)$.  On the other hand, by Lemma \ref{lem:mono1} the instruction $I(x,n)$ must therefore also be used by the full process of walks started from $A$.  In other words, $M_A(x,L)\le N_A(x,\infty)$.
\end{proof}
}

\section{Proof of Theorem \ref{thm:dichotomy}}
%\section{Recurrence Dichotomy}
\label{sec:recurrence}
We begin by considering a collection of particles indexed by the sites of $\Z\times \{0\}$ (one particle per site), each walking in continuous time in the lattice $\Z^2$.  

Consider a probability space $(\Omega, \mc{F},\P)$ on which $(U(y,n))_{y\in \Z^2,n \in \N}$ are i.i.d.~$\sim U(0,1]$ random variables, $(\Lambda_y)_{y\in \Z^2}$ are i.i.d.~rate 1 Poisson processes, and $\bs{I}=(I(y,n))_{n\in \N, y \in \Z^2}$ are i.i.d.~(over $y$) variables that are Rademacher vertical steps for $n=1$ and Rademacher horizontal steps for $n>1$.  Also on this space let $(W^x_n)_{n \in \N,x \in \Z}$ be i.i.d.~Rademacher random variables.  All of the above random variables are mutually independent.

We have a set of particles $P$ indexed by $\Z$, with particle $x$ starting at position $(x,0)$ at time 0.  For each $x\in \Z$ we also have an associated ordering $\ord{x}$ of $\Z$, defined by 
\[x\ord{x} x+1\ord{x} x-1\ord{x} x+2\ord{x} x-2\dots.\]
The particles move in continuous time according to the following rules:

\medskip

{\bf \noindent Version A:}  
When the clock at $y \in \Z^2$ fires for the $n$-th time at some time $t$, we choose a particle from among those at $y=(u,v)$ at time $t-$ (if there are any, if not we do nothing) to read the next instruction, using $U(y,n)$ and the ordering $\ord{u}$.  To be precise, if the particles at $y$ at this time are $x_1\ord{u} \dots \ord{u} x_m$ then we choose particle $x_i$ if $U(y,n)\in ((i-1)/m, i/m]$.  This is well defined since there are no infinite descending chains (and hence there are only finitely many particles at any given site at any time, a.s.)  Every particle stops as soon as it returns to $\Z\times \{0\}$.  This system is invariant with respect to horizontal shifts, and ergodic since the environment (instructions and Poisson processes) is i.i.d.~over the sites.

\medskip

{\bf \noindent Version B:}  As for Version A, except that now on the $k$-th occasion when the $x$-particle is chosen to read an instruction of the form $I(y,1)$ for some $y \in \Z^2$, it instead takes a vertical step with sign determined by $W^x_k$.  This version is also stationary (horizontal shift) and ergodic.  The resulting collection of walks has the same law as in Version A.

\begin{lemma}
\label{lem:AB}
For the above collection of particles (Version A, or Version B), almost surely:
\begin{itemize}
\item[(a)]  Every particle returns to $\Z \times \{0\}$,
\item[(b)] Every vertex in $\Z^2 \setminus \{\Z\times \{0\}\}$ is visited infinitely often by the collection of particles.
\end{itemize}
\end{lemma}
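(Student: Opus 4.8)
The plan is to treat (a) and (b) by different means. By the symmetry under $v\mapsto -v$ and under horizontal translation it suffices to consider particles whose first step is upward and vertices $(w,v)$ with $v\ge 1$. I will use one elementary observation repeatedly: whenever a particle in the collection moves it reads the first not-yet-followed instruction $I(y,k)$ at its current site $y$, and the $\pm$ sign of that instruction is uniform and independent of everything read earlier. Hence each particle's trajectory is a (stopped) martingale and its inner product with $(1,1)$ is a simple random walk; in particular, on the event that a particle never returns to the axis it takes infinitely many steps, so its diagonal coordinate is an unbounded simple random walk and it visits infinitely many distinct sites, while its height --- a nonnegative martingale with integer values and increments in $\{-1,0,1\}$ --- converges and is therefore eventually constant, so the particle eventually takes only horizontal steps along a single line $L=\Z\times\{h\}$ with $h\ge1$.

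I would prove (a) first for a finite collection $\bs P_S$, by moving the particles one at a time (legitimate by Proposition \ref{prop:abelian1}). Processing them in some order, when a particle $P$ is started all particles processed before it have ceased and have, between them, advanced the instruction counter at only finitely many sites (by induction: the first particle is an ordinary excited walk absorbed on the axis). If $P$ never returned to the axis it would eventually take only horizontal steps along some line $L$ and visit infinitely many distinct sites of $L$; but $P$'s first departure from a site is horizontal only if that site's counter was already advanced, necessarily by one of the finitely many sites touched before $P$ started --- a contradiction. So in a finite collection every particle returns to the axis almost surely, and hence the collection uses only finitely many instructions. To pass to the infinite collection one uses Lemma \ref{lem:chains}: a descending-chain argument shows that, up to any fixed time $t$, the trajectory of a fixed particle is unchanged by deleting the particles started outside a (random, finite) ball, so it agrees with the trajectory in a finite sub-collection, where the particle returns to the axis; letting $t\to\infty$ gives (a).

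For (b), fix $z=(0,h)$ with $h\ge1$; by (a) and horizontal-shift symmetry it is equivalent to show that infinitely many particles visit $z$. The first ingredient is an ergodic $0$--$1$ law. If a vertex $(w,h)$ is visited by infinitely many particles, then (by (a)) these are infinitely many distinct particles, all but finitely many of which leave $(w,h)$ by a horizontal step --- the one vertical first departure happening only once --- and by the independence and symmetry of the horizontal signs infinitely many of them step to $(w-1,h)$ and infinitely many step to $(w+1,h)$. Hence the set of level-$h$ vertices visited by infinitely many particles is either empty or all of level $h$; the latter event being horizontal-shift invariant, ergodicity of Version A (or B) makes its probability $p_h$ equal to $0$ or to $1$. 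It thus remains to rule out $p_h=0$.

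The main obstacle is precisely this: showing $p_h=1$, i.e.\ that the particles genuinely accumulate on level $h$. This is delicate because a single particle started far from $z$ returns to the axis before reaching $z$ with overwhelming probability (for simple random walk absorbed on a line the hitting probability decays like the inverse square of the distance, whose sum over the particles converges), so one cannot simply sum over the many available particles; rather, the particles must be shown to assist one another --- the trail of advanced instruction counters that a particle leaves on a horizontal line acting as a conduit along which later particles travel horizontally toward $z$ --- and this effect must be made quantitative, for instance by a renormalisation argument or by identifying a monotone/FKG structure making the relevant events positively correlated. I expect to use (a) again here, to guarantee that particles which climb to level $h$ subsequently descend and so keep feeding the lower levels; together with the $0$--$1$ law this would complete the proof.
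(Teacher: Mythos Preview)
Your treatment of (a) for a \emph{finite} collection is sound: processing particles one at a time, the height of the active particle is a nonnegative integer martingale, so a non-returning particle would eventually move only horizontally on some line $L_h$; since only finitely many sites of $L_h$ have had their vertical instruction consumed, the horizontal simple random walk must hit a fresh site and take a vertical step, a contradiction. The gap is in the passage to the infinite collection. The descending-chain lemma controls the \emph{set} of instructions used near a site up to time $t$, and even if one upgrades this to ``the trajectory of $P_0$ up to time $t$ agrees with that in a finite sub-collection $B_t$'', it only says the trajectories match up to time $t$; in the $B_t$-system $P_0$ does return, but perhaps only after time $t$, and as $t\to\infty$ the sub-collection grows and the return time may diverge. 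The deeper obstruction is that in the infinite system other particles continue to consume vertical instructions on $L_h$ \emph{concurrently} with $P_0$'s horizontal walk, so there is no fixed finite stale set that $P_0$ must escape --- your ``only finitely many sites touched before $P$ started'' step has no analogue. The Abelian property pins down the set of instructions used, not which particle uses which, so the one-at-a-time picture does not transfer.

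The paper avoids this by an ergodic double count in Version~B. If particle $0$ gets stuck at level $i$ with positive probability, then by ergodicity a positive density of particles do, and (being stuck and moving horizontally) they force every vertex of level $i$ to be visited. On the other hand, getting stuck at or above level $i$ means the particle makes strictly fewer vertical departures from level $i$ than a full simple-random-walk excursion would, so $\E[L^0_{\tau^{*0},i}]<1$; another ergodic average then gives that the density of visited sites on level $i$ is strictly below~$1$. These two conclusions contradict each other.

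For (b) you correctly isolate the $0$--$1$ dichotomy but then essentially stop: ``renormalisation or FKG'' and ``particles assist one another'' name the difficulty rather than resolve it. The paper again uses ergodicity. Once (a) holds, each particle makes exactly $\tau^x$ vertical moves, so $\E[L^0_{\tau^0,i}]=1$; since each visited site of level $i$ contributes exactly one vertical departure, two ergodic averages force the density of visited sites on level $i$ to equal $1$, hence every site is visited at least once. For infinite recurrence the paper uses a flow identity: with $\hat a_n\sim\mathrm{Bin}(2,1/2)$ the number of vertical entries to $(n,i)$ (now i.i.d.\ since levels $i\pm1$ are fully visited) and $m_n$ the net signed horizontal flux across the edge from $(n,i)$ to $(n+1,i)$, one has $\hat a_n-1=m_n-m_{n-1}$, so $\sum_{n=1}^t(\hat a_n-1)=m_t-m_0$. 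The left side is a lazy simple random walk in $t$; if every $T_n$ were finite then $(m_n)$ would be an ergodic finite process, bounded on a set of positive density, which a simple random walk cannot accommodate. Neither of these two mechanisms appears in your sketch.
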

\begin{proof}
It is sufficient to prove the result for version $B$, since the joint law of the particles in version $A$ is the same.  Therefore we proceed with Version B.

The $x$-particle will a.s.~only use finitely many of the $W^x_i$, but
we will define $S^x_0=0$ and $S^x_n=\sum_{k=1}^n W^{x}_k$ for all $n
\in \N$ anyway.   Let $\tau^x=\inf\{n\in
2\N:N^x_{n,+}=N^x_{n,-}\}=\inf\{n \in 2\N:S^x_n=0\}$, where
$N^x_{n,+}$ denotes the number of $+1$-valued random variables among
$\{W^x_1,\dots, W^x_n\}$ and $N^x_{n,-}=n-N^x_{n,+}$.  Then $\tau^x<\infty$ almost surely.  Let $\tau^{*x}$ denote the number of vertical moves actually made by the $x$-particle.  Then $1\le \tau^{*x}\le \tau^x$ almost surely (it is a priori possible that the movements of other particles prevent this particle from returning to $\Z\times \{0\}$, in which case $\tau^{*x}<\tau$).  
  Note that $\tau^x$ is $\sigma(W^x)$-measurable (where $W_x=(W^x_n)_{n \in \N}$) but $\tau^{*x}$ is not (it depends on the moves of other particles).

For $i,n \in \N$, let $L^x_{n,i}=\#\{k<
n:N^x_{k,+}=N^x_{k,-}+i\}=\#\{k<n:S^x_k=i\}$, which is the number of
vertical moves from level $i$ appearing in the list of moves
$(W^x_1,\dots, W^x_n)$.  Then $\E[L^x_{\tau^x,i}]=1$ for each $i$,
since $L^x_{\tau^x,i}$ is the number of visits to (departures from)
$i$ by a simple random walk excursion from 0.  These quantities are
$\sigma(W^x)$ measurable.  On the event $J^x_i$ that the particle at $(x,0)$ gets stuck at level $i$ or above (this event is not $\sigma(W^x)$ measurable) we have $L^x_{\tau^{*x},i}\le L^x_{\tau^x,i}-1$.  This shows that if $\P(J^x_i)>0$ then $\E[L^x_{\tau^{*x},i}]<1$.

For $z\in \Z^2$, let $p_z$ be the probability that $z$ is hit by the family of continuous-time excursions.  By (horizontal)  stationarity, $p_{(k,i)}=p_{(0,i)}$ for every $i$.  Let $K^x_{i}$ be the event that the walk at site $(x,0)$  gets stuck on level $i$.  If $\P(K^0_{i})>0$ then by ergodicity a.s.~there exist (infinitely many) particles that get stuck on level $i$, and then almost surely every vertex in level $i$ is visited (infinitely often).   Indeed, let  $t$ be the unit shift in the horizontal
  direction acting on $\Omega$, so that $t(\omega)=(\omega_{z+(1,0)})_{z\in\mathbb{Z}^2}$.
  Note that  $\{t^n:n\in\mathbb Z\}$ is an ergodic group
  of transformations acting on the space $(\Omega,\mathcal B(\Omega), \mathbb P)$.
  Therefore, if $N_{n,i}$ is the  number of particles within the sites
  $(-n,0), (-n+1,0),\ldots, (n,0)$ that get stuck at level $i$, we have that
a.s.
  $$
\lim_{n\to\infty}\frac{1}{2n+1}N_{n,i}=\P(K^0_{i}),
$$
which implies that the total number $N_i$ of particles that get stuck at level $i$, then 
$N_i=\lim_{n\to\infty}N_{n,i}=\infty$. In particular, if $\P(K^0_{i})>0$ then $p_{(k,i)}=1$ for each $k\in \Z$.  On the other hand, if $\P(K^0_{i})>0$ then we know that $\E[L^0_{\tau^{*0},i}]<1-\vep_i$, and again by ergodicity we have that the proportion $p_i$ of vertices at level $i$
that are visited is smaller than $1-\vep_i$: 
%by ergodicity
%  of the group $\{t^n:n\in\mathbb Z\}$ on $\mathbb P$ we have that
  $$
p_i=\lim_{n\to\infty}\frac{1}{2n+1}\sum_{x=-n}^n L_{\tau^{*x},i}^{x}=\E[L^0_{\tau^{*0},i}]<1-\vep_i.
$$
 This gives a contradiction.  Therefore $\P(K^0_i)=0$ for every $i\in \N$, and we conclude that every particle almost surely returns to level 0, or in other words, $\tau^{*x}=\tau^x$ almost surely.   This proves (a).

  Now note that we have from (a) that for each $i\in \N$ and $x \in \Z$, $\E[L^x_{\tau^{x},i}]=1$. We claim
    that this implies that $p_{(0,i)}=1$. To show this, define for each
    $z\in\mathbb Z^2$,
    $\mathcal L_z$ as the number of vertical moves made (by any of the particles) from site $z$.
    Note that since $\mathcal L_z=0$ when site $z$ is never visited and
    $\mathcal L_z=1$ whenever site $z$ is visited, we have that

    $$
\E[\mathcal L_z]=p_z.
$$
By ergodicity the density of vertical moves per unit length in level $i\in \N$ is
$$
\lim_{n\to\infty}\frac{1}{2n+1}\sum_{j=-n}^n \mathcal L_{(j,i)}=p_{(0,i)}.
$$
The same density can be computed again by ergodicity as
$$
\lim_{n\to\infty}\frac{1}{2n+1}\sum_{x=-n}^n  L^x_{\tau^{x},i}=\E[\mathcal L^0_{\tau^{0},i}]=1.
$$
It follows that $p_{(0,i)}=1$.

    This proves that every vertex is visited by the collection of excursions.  We now show that every vertex in level $i\in \N$  is in fact visited infinitely often by this family of excursions.  Fix $i$, and let $\hat a_n$ 
    %\todo{MH: I added hats to the $a_n$ to avoid confusion with the $a_n$ in the last section} 
    be the number of times level $i$ is entered from above or below at $(n,i)$.  Since $(n,i-1)$ is visited almost surely, and $(n,i+1)$ is visited almost surely, and each site has an i.i.d.~$\pm$ vertical step as the first move in the stack at that site, the numbers $(\hat a_n)_{n \in \Z}$ are i.i.d. $\Bin(2,1/2)$ random variables.

The first visit to $(n,i)$ results in a vertical exit from that site.  All subsequent visits result in a horizontal departure.  Let $h_{n}$ denote the number of horizontal departures from $(n,i)$ and $T_n$ denote the total number of visits to $(n,i)$.  Then we have the relationship
\[h_n=T_n-1.\]
We wish to show that $T_n=\infty$ for each $n$ (a.s.).  If $T_n=\infty$ for some $n$ with positive probability then by ergodicity $T_n=\infty$ for some $n$ and therefore $T_k=\infty$ for all $k$ (since all but the first move from $(n,i)$ is horizontal).  We shall therefore suppose that $T_n<\infty$  for every $n$ and look for a contradiction.
Now the number $T_n$ of visits to $(n,i)$ consists of the number $a_n$ of vertical visits to $(n,i)$ plus the number of visits from $(n-1,i)$ (call this $\ell_{n,+}$), plus the number of visits from $(n+1,i)$ (call this $r_{n,-}$).  Similarly, the number $h_n$ of horizontal departures from $(n,i)$ is the number of departures from $(n,i)$ to $(n-1,i)$ 
%{\color{blue} ( (Alejandro) I guess it is $(n-1,i)$ and $(n+1,i)$ instead of $(n,i-1)$ and $(n,i+1)$)}
(call this $\ell_{n,-}$), plus the number of departures from $(n,i)$ to $(n+1,i)$ (call this $r_{n,+}$).  It follows that we have
\[\ell_{n,-}+r_{n,+}=\hat a_n+\ell_{n,+}+r_{n,-}-1.\]
If $T_n$ is finite then all of these quantities are finite and we can write
\[\hat a_n-1=\ell_{n,-}+r_{n,+}-\ell_{n,+}-r_{n,-}.\]
  Let $m_n$ be the number of signed crossings from $(n,i)$ to $(n+1,i)$ (with reverse crossings counted negatively).  Then $m_n$ is an ergodic process, and $m_n=r_{n,+}-r_{n,-}$ and $m_{n-1}=\ell_{n,+}-\ell_{n,-}$.  Thus we have
 \[\hat a_n-1=m_n-m_{n-1}.\]
 Summing over $n$ from $1$ to $t$ we get
 \[\sum_{n=1}^t(\hat a_n-1)=m_t-m_0.\]
 By assumption the $m_n$ are a.s. finite, so there is some $K$ so that $|m_n|\le K$ with positive probability and by ergodicity the set $S = \{n : |m_n| \le K\}$ has positive density.  However, $\sum_{n=1}^{t}(\hat a_n-1)$ is a (lazy) simple random walk (recall that $(\hat a_n)_{n \in \Z}$ are i.i.d.~Bin$(2,1/2)$), and so it is impossible for it to be bounded by $K+|m_0|$ at all points of a set with positive density. This gives a contradiction, thus $T_n$ must be infinite, a.s.
\end{proof}

\subsection{Connection with the BERW}
Let $D_y=\{y \text{ is visited infinitely often by the walk}\}$.
\begin{lemma}
\label{lem:lev0}
For the balanced BERW in 2 dimensions, a.s.
\[D_{(u,0)} \text{ for some }u \in \Z \Rightarrow D_y \text{ for every }y \in \Z^2.\]
%\[\P\Big((\cup_{u \in \Z}D_{(u,0)})\setminus (\cap_{y \in \Z^2}D_y)\Big)=0.\]
\end{lemma}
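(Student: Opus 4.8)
The plan is to couple the walk with the continuous-time particle system of Section~\ref{sec:recurrence} (Version~A), having both read their instructions from the same stacks $\bs{I}$, and then to transport the conclusion of Lemma~\ref{lem:AB}(b) to the walk.

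\emph{First step: $D_{(u,0)}$ forces every site of level $0$ to be recurrent.} On $D_{(u,0)}$ the walk departs from $(u,0)$ infinitely often; its first departure from $(u,0)$ is vertical, while its $m$-th departure for $m\ge2$ uses the instruction $I((u,0),m)$, and the $(I((u,0),m))_{m\ge2}$ are i.i.d.\ Rademacher horizontal steps. Hence $D_{(u,0)}\setminus\big(D_{(u+1,0)}\cap D_{(u-1,0)}\big)$ is contained in the null event that $(I((u,0),m))_{m\ge2}$ is eventually constant, so $D_{(u,0)}\Rightarrow D_{(u\pm1,0)}$ almost surely, and iterating, $D_{(u,0)}\Rightarrow\bigcap_{k\in\Z}D_{(k,0)}$ almost surely.

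\emph{Main step: match the walk's excursions off level $0$ with the particle system.} Work on the full-measure event of the first step intersected with the conclusion of Lemma~\ref{lem:AB}. Every site of $\Z\times\{0\}$ is then visited, hence first visited at a finite time, and — since the first departure from any site is vertical — the walk at that moment leaves level $0$ and begins an \emph{excursion}; since $(u,0)$ is visited infinitely often the walk returns to level $0$ after any given time, so every excursion ends. Thus the walk performs exactly one excursion from each site of $\Z\times\{0\}$, in some order $k_0,k_1,\dots$ that enumerates $\Z$, separated by horizontal steps along level $0$. These horizontal steps consume only the instructions $I((k,0),m)$ with $m\ge2$ and never touch an instruction at a site off level $0$. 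Consequently, running the Version~A particles under the movement list that moves particle $k_0$ to completion of its excursion (it then ceases), then $k_1$, then $k_2$, and so on, an easy induction shows that the excursion of particle $k_j$ uses exactly the same instructions as the walk's $j$-th excursion, because both begin with the same stacks and see the same state of the off-level-$0$ stacks. By the Abelian property (Proposition~\ref{prop:abelian1}), together with the fact that the set of instructions consumed by the particle system is the same whether one uses the Poisson dynamics or a discrete movement list, the set of instructions consumed by the walk at sites off level $0$ equals the set consumed there by the Version~A system. By Lemma~\ref{lem:AB}(b) the latter consumes infinitely many instructions at every site off level $0$; hence so does the walk, i.e.\ the walk visits every off-level-$0$ site infinitely often. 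Together with the first step this gives $D_y$ for every $y\in\Z^2$.

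The part I expect to require the most care is the matching argument: one must check that interleaving the walk's excursions with its horizontal level-$0$ steps leaves unchanged which instructions are consumed off level $0$, that the excursion decomposition of the walk is well defined and genuinely indexed by all of $\Z\times\{0\}$ (this is precisely where the ``vertical first step'' rule enters), and that the discrete movement-list bookkeeping is compatible with the continuous-time set-up of Lemma~\ref{lem:AB}. The first step is a routine ``eventually-constant-is-null'' argument, and the final deduction is immediate.
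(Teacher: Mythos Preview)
Your strategy matches the paper's: propagate recurrence along level $0$, couple the off-level-$0$ behaviour of the BERW to the Version~A particle system via the shared stacks $\bs I$, and import Lemma~\ref{lem:AB}(b). Your direct excursion-by-excursion matching (the $j$-th BERW excursion coincides with particle $k_j$'s excursion when the Version~A particles are run sequentially in the BERW's visiting order) is if anything tidier than the paper's explicit construction.

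The one place your sketch is thinner than the paper is the step you yourself flag: transferring the conclusion of Lemma~\ref{lem:AB}(b) from the continuous-time Poisson dynamics to your discrete movement list. Proposition~\ref{prop:abelian1} only compares two discrete movement lists, and the Poisson dynamics on infinitely many particles does not directly define such a list (the firing times are dense in $[0,\infty)$, so there is no first move to begin the enumeration). The paper does not prove a general equivalence here; instead it uses a finite truncation. For fixed $y$ off level $0$ and fixed $n$, pick a finite time $T_0$ by which the Poisson system has visited $y$ at least $n$ times, and (via Lemma~\ref{lem:chains}) a \emph{finite} set $\bs P_0=\{P_1,\dots,P_k\}$ of particles responsible for those visits. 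For finitely many particles the Poisson ordering up to $T_0$ does give the initial segment of a movement list $L'$ for $\bs P_0$. A second movement list $L$ for the same $\bs P_0$ is built from the BERW by inserting cease commands into $\bs I$ (yielding $\bs I^*$) at the instructions the BERW uses to first reach each starting point $x_1,\dots,x_k$; with these ceases the BERW path up to the time $N_0$ it has launched and completed all $k$ excursions is exactly the concatenation of the $\bs P_0$-particle moves under $L$. Now Proposition~\ref{prop:abelian1} applied to this finite system gives $\bs I^*(L)=\bs I^*(L')\supset \bs I_0$, so the BERW has used every instruction that produced the $n$ Poisson visits to $y$. Your global matching would work once the Poisson-versus-movement-list equivalence is established for infinitely many particles; the finite truncation is precisely the paper's device for sidestepping that.
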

\begin{proof}
It is sufficient to show that if $\P(D_{(u,0)})>0$ then almost surely on $D_{(u,0)}$ we have that every vertex is visited infinitely often.   In order to show this, note first that if $D_{(u,0)}$ occurs then a.s.~$D_{(x,0)}$ occurs for every $x\in \Z$ since e.g.~on every visit to $(u,0)$ (except the first) there is probability $1/2$ of then visiting $(u-1,0)$ (so $(u-1,0)$ is also visited i.o.).  Therefore it remains to show that every vertex in every other level is visited i.o.  To show this, it is sufficient to prove that for any $y$ in level $i\ne 0$, and any $n \in \N$, $y$ is visited at least $n$ times a.s.  Note that the BERW can be realised as the movement of a particle (started at the origin) in an i.i.d.~instruction environment $\bs{I}$ as used in Version A.  

So let $\bs{I}$ be given, and suppose that $D_{(u,0)}$ occurs (and hence that $D_{(x,0)}$ occurs for every $x\in \Z$) for the BERW using the instruction environment $\bs{I}$.  Fix $y$ and $n$.  We know that $y$ is visited i.o. a.s.~by Version A particles (using the same $\bs{I}$), hence there is a.s. some finite time $T_0$ at which $y$ has been visited  $n$ times by time $T_0$.  These visits occurred by particles using some instructions $\bs{I}_0\subset \bs{I}$ prior to time $T_0$.  By Lemma \ref{lem:chains} $\bs{I}_0$ is finite and this set of instructions was used by a finite set of particles $\bs{P}_0=\{P_1,\dots, P_k\}$ with starting locations $\{x_1,\dots, x_k\}\ni 0$ for some $k$. 

Since the BERW in $\bs{I}$ visits every vertex in $\Z \times \{0\}$ infinitely often as above, 
there is some smallest time $N_0<\infty$ at which the BERW has departed from each of the sites $x_1,\dots, x_k$ above and returned to level $0$.  We will show that by time $N_0$ the BERW has used every instruction in $\bs{I}_0$, which implies that it has also visited $y$ at least $n$ times.

To achieve this, we will define a movement list $L$ for the particles $\bs{P}_0$ that will match the behaviour of the BERW until time $N_0$:  let $\rho_0=0$, and $v_0=0$.  For $j=1, \dots, k-1$ let $\rho_j=\inf\{n>\rho_{j-1}: X_n\in \{x_1,\dots, x_k\}\setminus \{v_0,\dots, v_{j-1}\}\}$ be the first time that the BERW has visited $j$ distinct elements of $x_1,\dots,x_k$, and $v_j$ be such that $(v_j,0)=X_{\rho_j}$.  Let $\rho_{k}=\inf\{n>\rho_{k-1}:X_n\in \Z\times \{0\}\}$.  Note that $\rho_k=N_0$.

The first $\rho_{k}$ of the elements of the list $L$ are defined so that the $r$-th element of $L$ is $v_j$ if and only if $\rho_{j}<r\le \rho_{j+1}$.  Thus, the first $\rho_1$ of the elements of $L$ are $0$ etc.  Subsequent elements of $L$ just cycle through the particles in $\bs{P}_0$ (one element each) thereafter.  Modify the instructions $\bs{I}$ to get a new instruction set $\bs{I}^*$ with the property that any instruction $I(y,n)$ used at time $\rho_j-1$ for some $j\ge 1$ is replaced with the same instruction plus a cease instruction (all other instructions remain the same).  In other words, we have constructed a particle system with particles $\bs{P}_0$, a movement list for those particles, and an instruction set such that the sequence of particle movements in this construction exactly matches the original BERW until time $N_0$ (after this time we don't care what happens).   Another way of phrasing this is that the BERW walks as particle $v_0=0$ until time $\rho_1$, when it continues to walk as particle $v_1$, until time $\rho_2$ etc.  

Now note that the Version A system of excursions remains unchanged if we replace the instruction set $\bs{I}$ with the modified $\bs{I}^*$.  This is because instructions $I((x,0),1)$ remain unchanged $I((x,0),1)=I^*((x,0),1)$ for every $x \in \Z$, and each excursion in Version A was stopped upon re-entry into $\Z \times \{0\}$.  

Now define a movement list $L'$ for the particles $\bs{P}_0$ for Version A  with instruction set $\bs{I}^*$ according to the order of movement of these particles in Version $A$ up to time $T_0$, followed by cycling through $\bs{P}_0$ (one element each) thereafter.  Thus, $\bs{I}^*_0=\bs{I}_0\subset \bs{I}^*(L')$.  This $\bs{P}_0$-particle process defined from Version A has moves that are identical to the moves of the particles $\bs{P}_0$ of Version $A$ until at least time $t$, so in particular this particle process visits $y$ at least $n$ times up to time $t$.

By Proposition \ref{prop:abelian1}, $\bs{I}^*(L')=\bs{I}^*(L)$.  Thus the particles in the BERW particle process use all of the instructions in $\bs{I}^*_0$.  Since each particle has been stopped by time $N_0$, this means that the BERW uses every instruction in $\bs{I}_0$ by time $N_0$ (and hence visits $y$ at least $n$ times by time $N_0$), as required.
\end{proof}

\begin{proof}[Proof of Theorem \ref{thm:dichotomy}]
In view of Lemma \ref{lem:lev0} it is sufficient to show that for any level $i \ne 0$, on the event $D_z$ that $z$ (in level $i$) is visited infinitely often by the BERW, almost surely every site is visited infinitely often.  Since the proof is similar to that of Lemma \ref{lem:lev0} we will be brief.  

Firstly note that it is trivially true that on the event $D_z$, almost
surely every 
site  in the same level as $z$ is visited infinitely often by the BERW.  Therefore it is sufficient to show that for any $y$ not in level $i$, $y$ is visited at least $n$ times almost surely (on the event $D_z$).

Now consider Version A, but with particles started from points in $\Z\times \{i\}$.  These particles visit every vertex outside level $i$ infinitely often, and hence there is some finite time $T_0$ by which they have visited $y$ at least $n$ times.  These visits used some set of instructions $\bs{I}_0$ that is finite, and these instructions were used by a finite set of particles, say $\bs{P}_0=\{P_1,\dots, P_k\}$, started at points  $\{(x_1,i), \dots, (x_k,i)\}$.  

Run the BERW with the same set of instructions, and note that by assumption each $(x_j,i)$ for $j=1,\dots, k$ is eventually hit by the BERW, and after departing from each of these points at least once, it will return to level $i$ at some finite time $N_0$.

Define a movement list $L$ for the set of particles $\bs{P}_1=\bs{P}_0\cup \{P_0\}$ where $P_0$ is started at $\{(0,0)\}$ by modifying instructions (to get new instructions $\bs{I}^*$) used by the BERW to hit each starting point (of a particle in $\bs{P}_0$) for the first time (modify such an instruction $I(w,n)$ to do the same move but cease the particle when it makes this move and reaches the relevant starting point of particles in $\bs{P}_0$), then start moving the particle just reached etc.  This defines the first $N_0$ moves in the movement list.  Now cycle between the particles $\bs{P}_1$ in some fixed order to complete the movements list $L$. Let $L^*$ be the movement list $L$ with all occurrences of $P_0$ deleted (so $L^*$ is a movements list for $\bs{P}_0$, rather than $\bs{P}_1$).  

Similarly, define a movement list $L'$ from Version A with the particles $\bs{P}_0$ as usual until time $T_0$, and then extend to an infinite movement list by cycling through $\bs{P}_0$.  
Modifying the instructions from $\bs{I}$ to $\bs{I}^*$ as above does not affect the behaviour of these particles up to time $T_0$, since they are each stopped when they enter level $i$ from outside.

Then we have by Proposition \ref{prop:abelian1} and Lemma \ref{lem:mono0} that $\bs{I}_0=\bs{I}^*_0\subset \bs{I}^*(L')=\bs{I}^*(L^*)\subset \bs{I}^*(L)$, but since all of the particles are stopped by time $N_0$ for the BERW construction, this means that all instructions in $\bs{I}^*_0$ have been used in the BERW up to time $N_0$, so all instructions in $\bs{I}_0$ have been used by the BERW up to time $N_0$.  Thus also $y$ has been visited at least $n$ times by the BERW up to time $N_0$.
\end{proof}

\section{Upper bound on the range}
\label{sec:upper}

In this Section we prove the non-trivial upper bound on the range given in \cref{thm:range}(i). 

We will actually consider a more general problem, in which we construct a 2-dimensional walk $\bs{Z}=(Z_t)_{t\ge 0}$ from a pair of independent one-dimensional simple random walks, say $\bs{X}=(X_n)_{n \ge 0}$ and $\bs{Y}=(Y_n)_{n \ge 0}$.
The horizontal steps of $\bs{Z}$ are given exactly by the steps of $\bs{X}$ and the vertical steps of $\bs{Z}$ are given exactly by the steps of $\bs{Y}$.
What we are allowed to choose at each time $n \in \N=\{0,1,2,\dots\}$ is whether to take the next horizontal step or the next vertical step.
For example, we use a vertical step if $\bs{Z}$ is at a new vertex and a horizontal step otherwise, this process is exactly the BERW.
Our objective is to maximize the size of the range (i.e.~trace) of the resulting 2-dimensional walk $\bs{Z}$.
%We can make a total of $n$ steps int the two walks, but are allowed to choose which walk will move at each step.
%Note that the choice to move $X$ does not change the subsequent steps of $Y$, and vice versa.

If we were to only ever use horizontal (resp.~vertical) steps, the range of the two-dimensional random walk $\bs Z$ would just be the range of the 1-dimensional walk $\bs{X}$ (resp.~$\bs{Y}$), which is typically of order $\sqrt{t}$ at time $t$ (with some exceptional times when the range is slightly larger, as given by the law of the iterated logarithm).
It is possible to get a range of order $\sqrt{t}$ also if we must use both directions, for example by making steps in only one of the coordinates in blocks of quickly growing lengths.
On the other hand if we choose to take a horizontal or vertical step
based on an independent sequence of coin tosses then the random walk
$\bs{Z}$ will just be a standard simple two-dimensional random walk,
with a range whose cardinality grows like  $\pi t/\log t$,  at time $t$ (see for example \cite{DE51}).
The main question is whether knowledge of the steps of the two walks in part or in their entirety allows for a significantly larger range.

Let us state a more precise version of this question. 
We call a sequence $\bs{x}=(x_n)_{n\ge 0}$ on $\Z$, a  \textbf{nearest neighbour path} if $|x_{n+1}-x_n|=1$ for all $n\ge 0$.
Similarly, we call a sequence $\bs{z}=(z_n^{\sss [1]},z^{\sss[2]}_n)_{n\ge 0}$ in $\Z^2$ a \textbf{nearest neighbor path} if $\|(z^{\sss[1]}_{n+1},z^{\sss[2]}_{n+1})-(z^{\sss[1]}_n,z^{\sss[2]}_n)\|_1=1$ for all $n\ge 0$, where $\|\cdot\|_1$ denotes the $l^1$ norm.
A \textbf{timing sequence} is a monotone increasing (north/east) nearest neighbour sequence in $\Z^2$ starting at $(0,0)$.
More explicitly, these are sequences $\bs{q}=(n_t,m_t)_{t\in \Z_+}$ with values in $\Z_+\times\Z_+$, with $n_0=m_0=0$ and such that for each $t\ge 0$ we have that $(n_{t+1},m_{t+1})$ is either equal to $(n_t+1,m_t)$ or to  $(n_t,m_t+1)$.
Note that $n_t+m_t=t$ for any timing sequence.  
We denote the set of timing sequences by $\cS$, and the set of restrictions of timing sequences to the first $t$ elements $\cS_t$.
Two infinite nearest neighbor paths $\bs{x}$ and $\bs{y}$ on $\Z$, and a timing sequence $\bs{q}\in\cS$ give rise to a nearest neighbor path $\bs{z}(\bs{x},\bs{y},\bs{q})=(z_t)_{t\ge 0}$ on $\Z^2$ defined by $z_t=(x_{n_t},y_{m_t})_{t\ge 0}$. 

Now consider two independent simple random walk paths $\bs{X}$ and $\bs{Y}$, and an independent source of randomness $\bs{U}$
%collection of i.i.d.~standard uniform random variables $\bs{U}=(U_i)_{i \in \N}$
on some probability space $(\Omega,\F,\P)$.  We write $\F^{\bs{X}}$ (resp. $\F^{\bs{Y}}$, $\F^{\bs{U}}$) for the $\sigma$-algebra generated by the entire sequence $\bs{X}$ (resp.~$\bs{Y}$, $\bs{U}$).  
The natural filtrations of $\bs{X}$ and $\bs{Y}$ are denoted by $(\F^{\bs{X}}_n)_{n\ge 0}$ and $(\F^{\bs{Y}}_n)_{n\ge 0}$ respectively. %and $(\mc{F}^U_n)_{n \ge 0}$
% given by $\mathcal F^X_n=\sigma(X_1,\ldots, X_n)$ etc.
%and $\mathcal F^Y_n=\sigma(Y_1,\ldots,Y_n)$.

A \textbf{timing rule} is any element $\bs{Q}=(N_t,M_t)_{t \in \Z_+}\in\cS$, measurable with respect to $\F^{\bs{X}} \otimes \F^{\bs{Y}} \otimes \F^{\bs{U}}$.
% %Let also $\mathcal F^X_\infty=\sigma\left(\cup_{n=1}^\infty\mathcal F^X_n    \right)$,  etc., 
% % $\mathcal F^Y_\infty=\sigma\left(\cup_{n=1}^\infty\mathcal F^Y_n  \right)$
%    and $\mathcal F_\infty=\sigma(\mathcal F^X_\infty,\mathcal
%    F^Y_\infty,\mc{F}^U_\infty)$. 
%    We call any  $\mathcal F_\infty$-measurable random element $\bs{N}=(N_t,M_t)_{t\ge 0}$ of $\mathcal S$
%    a {\it rule}.  
   
%\todo[inline]{The way that we have defined this, it doesn't even include 2-dimensional simple random walk.  Strictly speaking we need to add extra randomness to the space for this....}   
 
Each triple $\bs{X},\bs{Y},\bs{Q}$ defines a random nearest neighbor path $\bs{Z}=(Z_t)_{t\ge 0}=\bs{z}(\bs{X},\bs{Y},\bs{Q})$ satisfying $Z_t=(X_{N_t},Y_{M_t})$.
We call this random walk the \textbf{two-dimensional random walk
  generated by $(\bs{X}$, $\bs{Y})$, and the rule $\bs Q$}.
Let $\mc{R}_t^{\sss{\bs{Z}}}=\{Z_0,Z_1,\dots, Z_{t-1}\}$ be its range up to time $t$, and $R_t^{\sss {\bs{Z}}}=\#\mc{R}_t^{\sss {\bs{Z}}}$.

\blank{{\color{blue} Let $\mathcal G_s$ be the smallest $\sigma$-field
  generated
  by the sets $\{N_s=k\}\cap\{M_s=s-k\}\cap N\cap M\cap A\cap B$, with
   $A\in\mathcal F^{\bs 
     X}_k$, $B\in\mathcal F^{\bs Y}_{s-k}$, $k\ge 0$,
   and $N=\{N_1=k_1,\ldots, N_{s-1}=k_{s-1}\}$,
   $M=\{M_1=k'_1,\ldots, M_{s-1}=k'_{s-1}\}$, with $0\le
   k_1\le\cdots\le k_{s-1}<k$
   and $0\le k'_1\le\cdots\le k'_{s-1}<s-k$.
}}

\begin{open}
\label{open:rules}
  Is there a timing rule such that with positive $\P$-probability 
  % one can find a rule $\bs{N}$ such that 
  % Is there a rule $\bs{N}$ such that
  \[
    % \text{there exists a rule $\bs{N}$ such that }\quad
    \lim_{t\to\infty}\frac{R^{\sss{\bs{Z}}}_t}{t/\log t}=\infty?
  \]
  If yes, how close to $t$ can the range $R_t^{\bs{Z}}$ be?
\end{open}

%\todo[inline]{I guess it's clearer if the above problem is stated as:  Is it the case that a.s.~there exists some rule $\bs{N}$ such that... because we don't want to fix a rule without looking at the walks.  In fact it shouldn't be hard to show that for any deterministic rule (or any rule that is independent of the two 1-dimensional walks) this fails, and perhaps we should mention this.}

\medskip

We study an easier variant of the above question, which restricts to rules that do not depend on the entire random walk trajectories, and in particular includes the BERW.
We say that a rule $\bs{Q}$ is \textbf{$\bs{Y}$-adapted}, if the decision of the next move at time $t$ is determined by $(Z_i)_{i\le t}$, together with the entire walk $\bs{X}$ and the extra randomness $\bs{U}$, but does not depend on the steps of $\bs{Y}$ that have not yet been used.
Formally, let $\mc{G}_s$ be the $\sigma$-field generated by the sets 
\[ \{N_1=k_1,\dots, N_s=k_s\}\cap A \cap B\cap C,\]
for $0\le k_1\le \dots \le k_s\le s$, $A\in\F^{\bs{X}}$,  $B\in\F^{\bs{Y}}_{s-k_s}$, and $C\in \F^{\bs{U}}$.
We say that a rule $\bs{Q}=(N_t,M_t)_{t\ge 0}$ is $\bs{Y}$-adapted if for every $t\ge 0$ and $\bs{q}\in \cS_{t+1}$, the event $(Q_i)_{i\leq t} = \bs{q}$ is $\mc{G}_t$ measurable.

Let $\sigma_0=0$ and for $i \in \N$ let $\sigma_i=\inf\{t>\sigma_{i-1}:N_t>N_{t-1}\}$.  These are the jump times of the first coordinate of $\bs{Z}$, so this coordinate is constant on $[\sigma_i,\sigma_{i+1}-1]$.
%{\color{blue} Let $0<s_1<s_2<\cdots,$ be the jump times of the ${\bs X}$
 % coordinate of the ${\bs Z}$ random walk. 
 %For $i \in \Z_+$ define
%\begin{equation}
%    \label{bi}
%  B_i:=[\sigma_i,\sigma_{i+1}-1].
%  \end{equation}
%for $i\ge 1$ and $B_0:=[0,s_1-1]$. }
%During the interval $B_i$ the first coordinate of $\bs{Z}$ is constant.  Let $|[a,b]|=b-a+1$ denote the number of points in the discrete interval $[a,b]$.
For each $i\ge 1$, define the $\sigma$-algebra $\mathcal
  G_{\sigma_i}$
  as the smallest $\sigma$-algebra generated by the sets

  $$
  \{\sigma_i=k\}\cap A,
  $$
where $A\in\mathcal G_k$.
%\medskip
%\todo[inline]{Is $|B|$ the number of elements in a discrete interval $B$ (so e.g.~$|[s_i,s_i]|=1$)?}

\begin{theorem}
  \label{upper-bound-theorem} 
  Let $\bs X$ and $\bs Y$ two independent one-dimensional simple random walks, independent of
  % i.i.d.~standard uniform random variable 
  $\bs{U}$.
  Let $\bs Q$ be a timing rule that is $\bs Y$-adapted, and $\bs{Z}=\bs{Z}(\bs{X},\bs{Y},\bs{Q})$ be the associated two-dimensional walk.
  Suppose that there is a constant $K$ such that for all $i\ge 0$,
  % if $N_t$ is the number of steps done by the $\bs{X}$ random walk up to time $t$, we have that
  \begin{equation}
    \label{eee}
    \E[\sigma_{i+1}-\sigma_i|\mathcal G_{\sigma_i}]\le K, \text{ a.s.}
  \end{equation}
  Then, 
  \[
    \limsup_{t\to\infty}\frac{\E[R^{\sss{\bs{Z}}}_t]}{t/\sqrt{\log\log t}}<\infty,
  \]
  and in particular $R^{\sss{\bs{Z}}}_t = o(t)$ in probability.
\end{theorem}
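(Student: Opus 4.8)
The plan is to control the range of $\bs{Z}$ by bounding, for each horizontal level occupied by $\bs{Z}$, the number of distinct vertical positions visited while at that level, and then summing over levels. More precisely, the condition \eqref{eee} forces the number of horizontal jumps $N_t$ up to time $t$ to be of order $t$ (in expectation), so the walk $\bs{X}$ is ``used up'' at linear speed; the number of distinct horizontal positions visited is then of order $\sqrt{t}$ by the usual $\sqrt{\cdot}$-range estimate for the 1D walk $\bs{X}$ (with a law-of-iterated-logarithm correction). The subtle and essential gain is a factor $\sqrt{\log\log t}$ coming from the fact that, while $\bs{Z}$ sits at a fixed horizontal position $x$, the $\bs{Y}$-adaptedness forces the vertical coordinate to explore a genuinely ``fresh'' piece of the simple random walk $\bs{Y}$, and the maximum over the (order $\sqrt{t}$ many) horizontal excursions of the lengths of these fresh pieces cannot all be large simultaneously — typically only $O(t/\sqrt t) = O(\sqrt t)$ vertical steps are available per horizontal position, but the number of \emph{distinct} vertical sites explored in a run of $m$ fresh SRW steps is $O(\sqrt m)$, and one squeezes an extra $\sqrt{\log\log}$ out of a second-moment/maximal argument.

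First I would set up the decomposition: write $R^{\sss{\bs{Z}}}_t \le \sum_{j} D_j$, where the sum is over the horizontal positions $x_j$ visited by $\bs{Z}$ up to time $t$ and $D_j$ is the number of distinct vertical values taken by $\bs{Z}$ during the (union of) time intervals it spends at horizontal position $x_j$. Using $\sigma_i$, the $i$-th horizontal jump time, the hypothesis \eqref{eee} and the optional-stopping/Wald-type bound give $\E[\sigma_i] \le Ki$, hence the number of horizontal jumps by time $t$ is at least $\epsilon t$ with high probability (and $\E$ of the number of such jumps is $\gtrsim t/K$). Second, between consecutive horizontal jumps the vertical coordinate moves along $\bs{Y}$: on the interval $[\sigma_i,\sigma_{i+1}-1]$ it consumes $\sigma_{i+1}-\sigma_i - 1$ fresh steps of $\bs{Y}$ (``fresh'' because $\bs{Q}$ is $\bs{Y}$-adapted, so these are steps of $\bs{Y}$ not previously revealed — this is where $\mc{G}_{\sigma_i}$-measurability is used to condition and treat the new $\bs{Y}$ increments as an independent SRW increment block). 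The number of distinct vertical sites visited in such a block of length $L_i := \sigma_{i+1}-\sigma_i$ is at most $1 + (\max - \min)$ of a SRW run of length $L_i$, which has conditional expectation $O(\sqrt{L_i})$ and, more importantly, a sub-Gaussian-type tail in $u/\sqrt{L_i}$.

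Third — and this is the main obstacle — one must assemble these per-block contributions across the possibly many visits to the same horizontal position and across all horizontal positions, extracting the $\sqrt{\log\log t}$ improvement over the trivial bound $\sum_i \sqrt{L_i}$, which by Cauchy–Schwarz and $\sum_i L_i \le t$ only gives $O(t/\sqrt{(\#\text{jumps})}) = O(\sqrt t)$ times $\sqrt{\#\text{jumps}} \sim \sqrt t$, i.e.\ $O(t)$ — not good enough. The gain must come from the observation that the distinct vertical sites visited at a given horizontal position, aggregated over \emph{all} returns to it, form (a subset of) the range of a single SRW path in $\bs{Y}$ of some random length $\ell_j = \sum_{i : x_{(i)} = x_j} L_i$, and $\sum_j \ell_j \le t$ while the number of distinct horizontal positions $j$ is $\Theta(\sqrt t)$ — so by concavity $\sum_j \sqrt{\ell_j} \lesssim \sqrt{\sqrt t \cdot t} = t^{3/4}$, which is already $o(t)$. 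The stated $t/\sqrt{\log\log t}$ is therefore a (deliberately weak) bound and the real work is only to make the ``distinct horizontal positions $= \Theta(\sqrt t)$'' step and the ``aggregate to one SRW range per column'' step rigorous, handling the dependence between which intervals land on which column (a stopping-time/optional-sampling argument with the filtration $\mc{G}_{\sigma_i}$) and the LIL-type fluctuations of the horizontal range (here a crude union bound costing a $\log\log$ factor suffices, which is presumably exactly why $\sqrt{\log\log t}$ appears). Finally, $\E[R^{\sss{\bs Z}}_t] = o(t)$ gives $R^{\sss{\bs Z}}_t = o(t)$ in probability by Markov's inequality, completing the proof; and since BERW satisfies \eqref{eee} (consecutive opposite vertical steps force a nearby horizontal step, so $\E[\sigma_{i+1}-\sigma_i \mid \mc{G}_{\sigma_i}]$ is bounded), \cref{thm:range}(i) follows.
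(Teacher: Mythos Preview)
There is a genuine gap at your third step. The claim that ``the distinct vertical sites visited at a given horizontal position $x_j$, aggregated over all returns to it, form a subset of the range of a single SRW path in $\bs{Y}$ of length $\ell_j$'' is false. The $\bs{Y}$-segments consumed during the various visits to column $x_j$ are \emph{non-contiguous} pieces of $\bs{Y}$: between two visits to $x_j$ the walk uses $\bs{Y}$-steps at other columns, so the vertical starting point of the $(k{+}1)$-st visit to $x_j$ can differ arbitrarily from the vertical endpoint of the $k$-th visit. The set of vertical sites occupied at column $x_j$ is a union of \emph{translates} of the ranges of the individual segments, not the range of their concatenation. A sanity check: for the ordinary 2D simple random walk (independent coin-flip timing rule, which is $\bs{Y}$-adapted and satisfies \eqref{eee} with $K=2$), the range is $\sim \pi t/\log t$ and the number of columns is $\sim\sqrt t$, so a typical $D_j$ is of order $\sqrt t/\log t$; meanwhile a typical $\ell_j$ is of order $\sqrt t$, giving $\sqrt{\ell_j}\sim t^{1/4}\ll D_j$. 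Your concavity step would then yield $R_t\lesssim t^{3/4}$, contradicting the known growth. So the per-column bound $D_j\lesssim\sqrt{\ell_j}$ cannot hold, and the bound $t/\sqrt{\log\log t}$ is not ``deliberately weak''.

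The paper's proof avoids this by never aggregating non-contiguous visits. It decomposes the $\bs{X}$-timeline into maximal dyadic $\eps$-\emph{slow} intervals $I$ (on which the range of $\bs{X}$ is at most $\eps\sqrt{|I|}$) plus a small complementary set. Each such $I$ maps to a single \emph{contiguous} $\bs{Z}$-time interval $\phi(I)$, on which the $\bs{Y}$-segment used really is one fresh SRW piece (this is where $\bs{Y}$-adaptedness enters). On $\phi(I)$ the walk lies in a rectangle of width $\le\eps\sqrt{|I|}$, and a supremum-over-stopping-times lemma (\cref{L:range_sup}) converts the product bound $R^{\bs{Z}}_{\phi(I)}\le \eps\sqrt{|I|}\cdot R^{\bs{Y}}_{I'}$ into $\E[R^{\bs{Z}}_{\phi(I)}]\le\gamma\eps\,\E[|\phi(I)|]$. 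Summing over slow intervals, bounding the complement via \cref{P:slow_diad} and condition \eqref{eee}, and then optimising $\eps\sim 1/\sqrt{\log\log t}$ gives the stated bound. The $\sqrt{\log\log t}$ is thus not a slack LIL correction but the genuine cost of balancing slow-interval coverage against the width parameter $\eps$.
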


This theorem falls short of answering Open Problem  \ref{open:rules} in 3 ways:
First, it requires that the timing rule is $\bs{Y}$-adapted,
second, it requires that the horizontal (first coordinate) steps are sufficiently frequent,
and lastly, it only shows $\E[R^{\sss{\bs{Z}}}_t]$ grows no faster than $\frac{t}{\sqrt{\log\log t}}$.
By Markov's inequality, this theorem implies that for every $\vep>0$ there exists $K_\vep>0$ such that 
\begin{align}
  \limsup_{t\to\infty}\P\Big(\dfrac{R^{\sss{\bs{Z}}}_t}{t/\sqrt{\log\log t}}>K_\vep\Big)<\vep.
\end{align}

We claim that the conditions of \Maaak{\cref{upper-bound-theorem} }
%\cref{thm:range}(i) 
apply to the balanced excited random walk.
To see this let $\bs{X}, \bs{Y}$ be independent one-dimensional simple random walks, and let $\bs{Q}$ be the timing rule that takes a step from $\bs{Y}$ on the first departure from any site in $\Z^2$ and otherwise a step from $\bs{X}$.
Now observe that any three consecutive steps include a horizontal step with probability at least 1/2, irrespective of the past, which implies that (\ref{eee}) is satisfied with $K=6$.
Indeed, if the first step in this interval is vertical then either the walker is now at a previously visited site (so must take a horizontal step) or it is at an unvisited site and has probability $1/2$ of stepping back to its previous location, whence the third step must be horizontal.
Hence, \cref{thm:range}(i) is an immediate Corollary of Theorem \ref{upper-bound-theorem}.

\subsection{Notation and preliminary lemmas}

We now introduce some important notations that will be used in the proof of 
\cref{upper-bound-theorem}, including the concept of slow intervals.
We furthermore prove here some preliminary lemmas.
Everything in this sub-section refers to a (nearest neighbour, symmetric) 1-dimensional discrete-time simple random walk $\bs{S}=(S_t)_{t\ge 0}$.
For a time interval $I=[s,s+t]$ (with $s,t\in \Z_+$) and $\vep>0$, we
say that  $\bs{S}$ is $\vep$-slow on $I$, or that $I$ is $\vep$-slow
for the walk $\bs{S}$, if the range $\{S_s,S_{s+1},\dots, S_{s+t}\}$
of the random walk during the time interval $I$ 
  is contained in some closed interval of length $\vep\sqrt{t}$.  For such an interval $I=[s,s+t]$, write $|I|=t$ for the length of the interval, and write $\mc{R}^{\bs{S}}_I$ for the range of $\bs{S}$ during the interval $I$ as above.  Let $R^{\bs{S}}_I$ denote the cardinality of $\mc{R}^{\bs{S}}_I$, and note that $R^{\bs{S}}_I\le |I|+1$.  Then $I$ is $\vep$-slow for $\bs{S}$ means that 
\begin{equation}
R^{\bs{S}}_I-1\le \vep\sqrt{|I|}.\label{eps_slow_def}
\end{equation}
Note that for $|I|\ge 2$ this can only occur if $t\ge\vep^{-2}$, since at least two points must be visited during such an interval. 

A key step in our argument is to show that for a simple random walk, the time interval $[0,n]$ can be almost covered by $\vep$-slow intervals.
Towards this we first estimate the probability that a given interval is $\vep$-slow.
Our first lemma is a fairly standard estimate of the probability that an interval is slow for a random walk.

\begin{lemma}
  \label{L:slow_pr}
  Let $\tau_m$ be the hitting time of $\pm \lfloor m\rfloor$ by a simple random walk started at $0$.
  Then there is a constant $c_1$ so that for any $m\ge 2$ and $k\geq 1$ we have $\P(\tau_m > km^2) \geq e^{-c_1k}$.
\end{lemma}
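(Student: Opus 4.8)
The statement is a lower bound on the probability that a simple random walk, started at $0$, stays inside $(-m,m)$ (roughly) for time $km^2$. Equivalently we want $\P(\tau_m>km^2)\ge e^{-c_1 k}$. The natural approach is to chop the time interval $[0,km^2]$ into $\lceil k\rceil$ blocks each of length $m^2$ (or $\lceil m^2\rceil$), and to force the walk, block by block, to avoid leaving a window of width $2m$ around $0$. The key sub-estimate is that there is a constant $p_0>0$, independent of $m$ (for $m\ge 2$), such that for a simple random walk started anywhere, the probability that it does not move by more than, say, $m/2$ over a time interval of length $m^2$ is at least $p_0$. Then one applies the Markov property at the endpoints of the blocks.

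First I would record the one-block estimate. By the invariance principle (or directly via the reflection principle / maximal inequality for the SRW), for a walk started at $0$, $\P\big(\max_{t\le m^2}|S_t|\le m/2\big)$ converges as $m\to\infty$ to $\P\big(\max_{t\le 1}|B_t|\le 1/2\big)>0$ for Brownian motion, and in particular is bounded below by some $p_0>0$ uniformly over $m\ge 2$ (after checking the finitely many small $m$ by hand, noting that for $m\ge 2$ the event in question is nonempty). Actually, to keep things self-contained one can avoid the invariance principle: by Kolmogorov's maximal inequality $\P(\max_{t\le m^2}|S_t|> m/2)\le 4\,\E[S_{m^2}^2]/m^2 = 4$, which is useless, so one instead uses the Etemadi or reflection bound $\P(\max_{t\le m^2}|S_t|>m)\le 2\P(|S_{m^2}|>m-1)$, and a Gaussian/Hoeffding tail bound for $|S_{m^2}|$; choosing the window a bit larger (say width $2\lambda m$ with $\lambda$ a fixed small constant, or handling the constant in $\tau_m$) gives $\P(\max_{t\le m^2}|S_t|\le \lambda m)\ge p_0>0$ for all $m\ge 1$. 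I would be a little careful about integer parts: replace $m^2$ by $\lfloor m^2\rfloor$ and $\lambda m$ by $\lfloor \lambda m\rfloor$, and note $m\ge2$ makes these positive.

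Next, the chaining step. Partition $[0,km^2]$ into $N:=\lceil k\rceil$ consecutive blocks $I_1,\dots,I_N$ each of length $\lceil m^2\rceil$ (the last may be shorter, which only helps). On the event that within each block the walk displaces by at most $\lambda m$ from the block's starting point, the total displacement from $0$ by the end of block $j$ is at most $j\lambda m$. That is too large once $j$ grows, so instead I would not track cumulative displacement but rather choose the geometry so that a single "good block" is enough: force the walk to return near $0$ and stay within distance $m$. Concretely, use blocks of length $\lceil m^2\rceil$, and on block $I_j$ demand the stronger event $G_j=\{$walk does not leave $(-m,m)$ during $I_j$ \emph{and} returns to the interval $[-1,1]$ at the end of $I_j\}$; by the same maximal-inequality-plus-local-CLT argument, $\P(G_j\mid S \text{ starts in } [-1,1])\ge p_1>0$ uniformly in $m\ge 2$ (the local CLT, or a direct combinatorial bound, gives that the endpoint lands in $[-1,1]$ with probability $\ge c/m$, but that costs a factor $m$ — so instead demand the endpoint lands in $[-\lambda m,\lambda m]$, which has probability bounded below, and then iterate with a window that grows additively). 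Since additive growth over $N$ blocks gives window $N\lambda m$, I would instead rescale: run the argument with window $2m$ fixed, blocks of length $\lceil \delta m^2\rceil$ for a small fixed $\delta$ so that $N'=\lceil k/\delta\rceil$ blocks are needed, and in each block demand displacement $\le \lambda m$ with the refined choice $N'\lambda\le 1$, i.e. $\lambda\sim \delta/k$ — but that makes $p_0$ depend on $k$.

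The clean fix, and the one I'd actually write: forget returning to $0$. Just demand that across each of the $\lceil k\rceil$ length-$\lceil m^2\rceil$ blocks the walk stays within a \emph{fixed} window of width $2m$ \emph{centered at its current position}, shrunk by a factor so cumulative drift is controlled — no; the truly clean statement is: $\{\tau_m>km^2\}\supset\bigcap_{j=1}^{N}\{$during $I_j$, $|S_t-S_{\text{start of }I_j}|<m/N\}$ is wrong too. Let me state the real plan. Set $N=\lceil k\rceil$ and require, for each block $I_j$ of length $\le\lceil m^2\rceil$, the event $A_j=\{$the walk during $I_j$ stays within distance $m/(2N)$ of its value at the start of $I_j\}$; then on $\bigcap_j A_j$ the walk never leaves $(-m,m)$. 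Now $A_j$ is a displacement-of-at-most-$m/(2N)$ over time $\le \lceil m^2\rceil=\lceil(m/(2N))^2\cdot 4N^2\rceil$ event, i.e. a "slow over $4N^2$ natural time units" event, and by Lemma~\ref{L:slow_pr}-type reasoning — or rather, precisely the estimate we are proving, used inductively in $k$ — this has probability $\ge e^{-c\cdot 4N^2}$... which is $e^{-O(k^2)}$, the wrong order.

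**So the correct plan (final).** The exponent must be linear in $k$, which means we cannot afford to shrink the per-block window. The right decomposition: blocks of length $\lceil m^2\rceil$, and on each block demand only that the walk stays in $(-m,m)$ — not relative to anything, absolutely — and additionally that at the \emph{end} of the block it sits in the sub-interval $[-m/2,m/2]$, so that the Markov property at block boundaries keeps us honest. Let $p_0:=\inf_{m\ge 2}\inf_{|y|\le m/2}\P_y\big(\max_{t\le \lceil m^2\rceil}|S_t|<m,\ |S_{\lceil m^2\rceil}|\le m/2\big)$. The hard part is showing $p_0>0$. This follows from the invariance principle: under diffusive scaling the event converges to $\P_{y'}(\sup_{t\le 1}|B_t|<1,\ |B_1|\le 1/2)$ with $|y'|\le 1/2$, which is bounded below by a positive constant uniformly over such $y'$ by continuity and strict positivity (the Brownian event is an open event of positive probability, and the map $y'\mapsto \P_{y'}(\cdot)$ is continuous on the compact set $[-1/2,1/2]$). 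Uniformity in $m$ down to $m=2$: the finitely many small cases have strictly positive probability since the relevant event is realizable (e.g. the walk can just oscillate). Then, by the Markov property applied successively at times $\lceil m^2\rceil, 2\lceil m^2\rceil,\dots,(N-1)\lceil m^2\rceil$ with $N=\lceil k\rceil$, and since $km^2\le N\lceil m^2\rceil$,
\[
\P(\tau_m>km^2)\ \ge\ \P\Big(\bigcap_{j=1}^{N}\{\text{block }j\text{ good}\}\Big)\ \ge\ p_0^{\,N}\ \ge\ p_0^{\,k+1}\ =\ e^{-c_1 k}
\]
with $c_1=-2\log p_0$ (absorbing the $+1$ using $k\ge 1$, so $p_0^{k+1}\ge p_0^{2k}=e^{-c_1 k}$). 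I expect the genuine obstacle to be exactly the uniform lower bound $p_0>0$ over all $m\ge 2$ and all starting points in $[-m/2,m/2]$ — the invariance-principle argument handles large $m$, and small $m$ is a finite check, but one must phrase it so both pieces combine cleanly; everything else (the partition into $\lceil k\rceil$ blocks, the Markov property, bookkeeping of integer parts) is routine. Conveniently, the paper's Lemma~\ref{L:slow_pr} as \emph{stated} is exactly this statement in the form $\P(\tau_m>km^2)\ge e^{-c_1k}$, so the above is a complete proposal for its proof.
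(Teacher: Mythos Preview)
Your final plan is correct and is essentially identical to the paper's proof: chop time into $\lceil k\rceil$ blocks of length $m^2$, require on each block that the walk stays in $(-m,m)$ and ends in $[-m/2,m/2]$, use Donsker's invariance principle (plus a finite check for small $m$) to get a uniform lower bound $p_0>0$ on the per-block probability from any start in $[-m/2,m/2]$, and iterate via the Markov property to obtain $\P(\tau_m>km^2)\ge p_0^{\lceil k\rceil}\ge p_0^{2k}=e^{-c_1 k}$. The several abandoned attempts preceding it are unnecessary detours, but the eventual argument matches the paper exactly.
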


Note that $m,k$ need not be integers.

\begin{proof}
  By Donsker's invariance principle there is a constant $c$ so that for any $m\ge 2$ and any $x\in[-m/2,m/2]$,
  \[ \P_x\big(\tau_m \geq m^2 , |S_{m^2}|\leq m/2\big) \geq c. \]
  It follows by induction and the Markov property that for any integer $k$,
  \[ \P_x(\tau_m \geq km^2) \geq c^k. \]
  Finally, if $k\ge 1$ is not an integer, we have
  \[
    \P_x(\tau_m \geq km^2)
    \geq \P_x(\tau_m \geq \lceil k\rceil m^2)
    \geq c^{\lceil k\rceil}
    \geq c^{2k}.   \qedhere
  \]
\end{proof}

\begin{lemma}\label{L:more_slow}
For $\vep\in (0,1)$ and $i \in \N$, let  $A_i=A_i(\vep)$ be the event that $[0,2^i]$ is $\vep$-slow for $\bs{S}$.
   There exists a constant $c_3$ and independent events $(\tilde A_i)_{i\in \N}$ such that 
for every $i\in \N$ with $2^i > 2^8\vep^{-2}$
\[   \P(\tilde A_i) \geq e^{-c_3\vep^{-2}} \text{ and  }\tilde A_i\subset A_i.\]
\end{lemma}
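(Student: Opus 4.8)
The plan has two parts: a one-scale confinement estimate that lower-bounds $\P(A_i)$, and a decorrelation step that thins the (correlated) family $(A_i)$ down to a genuinely independent family $(\tilde A_i)$ of sub-events. For the first part, note that if $\max_{0\le t\le 2^i}|S_t|<\lfloor\tfrac12\vep\sqrt{2^i}\rfloor$ then the range of $\bs S$ on $[0,2^i]$ lies in a closed interval of length $<\vep\sqrt{2^i}$, so this event is contained in $A_i$; and its probability is exactly $\P(\tau_m>2^i)$ with $m=\tfrac12\vep\sqrt{2^i}$. The hypothesis $2^i>2^8\vep^{-2}$ forces $m\ge 8\ge 2$, and $2^i=km^2$ with $k=4\vep^{-2}\ge 1$, so \cref{L:slow_pr} gives $\P(A_i)\ge e^{-4c_1\vep^{-2}}$. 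Fix $i_0=i_0(\vep):=\min\{i:2^i>2^8\vep^{-2}\}$ and $p:=e^{-c_3\vep^{-2}}$, where $c_3$ is a suitable (absolute) multiple of $c_1$ to be pinned down at the end.

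For the second part I would enlarge the probability space by an i.i.d.\ sequence $(V_i)_{i\ge 1}\sim\mathrm{Unif}(0,1]$ independent of $\bs S$, set $\tilde A_i:=\varnothing$ for $i<i_0$, and define $\tilde A_i$ for $i\ge i_0$ recursively: writing $\mathcal G_{i-1}:=\sigma(\tilde A_{i_0},\dots,\tilde A_{i-1})$, put
\[
  \tilde A_i:=A_i\cap\Big\{V_i\le \frac{p}{\P(A_i\mid\mathcal G_{i-1})}\Big\}.
\]
Since $V_i$ is independent of $\mathcal G_{i-1}\vee\sigma(\bs S)$, conditioning first on $\mathcal G_{i-1}\vee\sigma(\bs S)$ and then on $\mathcal G_{i-1}$ gives $\P(\tilde A_i\mid\mathcal G_{i-1})=p$ almost surely. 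Hence $\tilde A_i$ is independent of $\mathcal G_{i-1}$, so by induction $(\tilde A_i)_{i\ge 1}$ is an independent family; and by construction $\P(\tilde A_i)=p\ge e^{-c_3\vep^{-2}}$ and $\tilde A_i\subset A_i$ for $i\ge i_0$, which is precisely the claim.

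The step that requires real care — and the main obstacle — is that the thinning factor above is legitimate only if $\P(A_i\mid\mathcal G_{i-1})\ge p$ almost surely; with a careless choice of conditioning data this fails, because if $\mathcal G_{i-1}$ recorded the whole path up to time $2^{i-1}$, then on the positive-probability event that $\bs S$ has already left every interval of length $\vep\sqrt{2^i}$ by time $2^{i-1}$ one would have $\P(A_i\mid\mathcal G_{i-1})=0<p$. So the construction must be run with events that expose as little of $\bs S$ as possible: I would replace each $A_j$ in the recursion by a confinement sub-event $C_j\subset A_j$ at a slightly finer spatial scale (so that $C_{i-1}$ places $S_{2^{i-1}}$ well inside the level-$i$ window, by the geometric relation $\sqrt{2^{i-1}}=2^{-1/2}\sqrt{2^{i}}$ between consecutive scales), and verify that on every atom of the (finite) algebra $\mathcal G_{i-1}$ the conditional probability of $C_i$ is still at least $p$ — on the atom where $\tilde A_{i-1}$ occurred this follows by conditioning on $S_{2^{i-1}}$ and applying \cref{L:slow_pr} once more to the single block $[2^{i-1},2^i]$, and the other atoms are handled by organising the $\tilde A_j$'s so that their ``non-occurrence'' carries no usable information about $\bs S$ at time $2^{i-1}$. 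Once this uniform conditional lower bound is in place, the rest — checking $C_i\subset A_i$ and fixing $c_3$ as an absolute multiple of $c_1$ — is routine geometric-sum bookkeeping.
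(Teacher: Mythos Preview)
Your confinement estimate in the first paragraph is correct. The gap is exactly where you flag it: the thinning construction needs $\P(C_i\mid\mathcal G_{i-1})\ge p$ on \emph{every} atom of $\mathcal G_{i-1}$, and your last paragraph only handles atoms on which $\tilde A_{i-1}$ occurred. On atoms where $\tilde A_{i-1}$ fails, the event $\tilde A_{i-1}^c = C_{i-1}^c\cup\{V_{i-1}>\cdots\}$ genuinely biases the conditional law of the walk toward \emph{non}-confinement on $[0,2^{i-1}]$; combined with the earlier $\tilde A_j$'s there are $2^{i-i_0}$ atoms, some of probability as small as $p^{i-i_0}$, on which the conditional distribution of $\bs S$ is not under control and there is no reason $\P(C_i\mid\cdot)$ should stay above any fixed positive level. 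The phrase ``organising the $\tilde A_j$'s so that non-occurrence carries no usable information'' is a restatement of the obstacle, not a solution to it.

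The paper avoids conditioning altogether with a different construction. Set $T_0=0$ and
\[
  T_i \;=\; \inf\big\{t>T_{i-1} : |S_t - S_{T_{i-1}}| \ge \lfloor \vep\, 2^{i/2}\rfloor\big\},
  \qquad
  \tilde A_i := \{T_i - T_{i-1} > 2^i\}.
\]
By the strong Markov property the increments $T_i-T_{i-1}$ are independent, so the $\tilde A_i$ are independent with no external randomisation and no conditional lower bounds required; \cref{L:slow_pr} applied to the walk after $T_{i-1}$ with $m=\vep\, 2^{i/2}$ and $k=\vep^{-2}$ gives $\P(\tilde A_i)\ge e^{-c_1\vep^{-2}}$. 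On $\tilde A_i$ one has $2^i<T_i$, and for every $t\le T_i$ the telescoping bound $|S_t|\le \sum_{j\le i}\lfloor \vep\, 2^{j/2}\rfloor\le 4\vep\,2^{i/2}$ shows that $[0,2^i]$ is $(8\vep)$-slow; the factor $8$ is then absorbed into $c_3$ by replacing $\vep$ with $\vep/8$. The idea you are missing is to obtain independence \emph{structurally} from stopping-time increments rather than by thinning correlated events.
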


\begin{proof}
  Consider the stopping times $(T_i)_{i\ge 0}$ defined by $T_{0} = 0$ and 
  \[
    T_i = \inf \big\{ t > T_{i-1} : |S_t-S_{T_{i-1}}| \geq \lfloor \vep 2^{i/2} \rfloor \big\},
  \]
  that is the time since the previous stopping time in the sequence it takes the walk to move $\lfloor \vep 2^{i/2} \rfloor$.
  By the Markov property, the increments $T_i-T_{i-1}$ are independent.
  Let $i_0 =i_0(\vep)= \lfloor \log_2 (4\vep^{-2}) \rfloor$.
  Define the events
  \[
    \tilde A_i(\vep) = \{T_i - T_{i-1} > 2^i\}.
  \]
  Applying \cref{L:slow_pr} to the walk after time $T_{i-1}$ (i.e. $S_{T_{i-1}+n}-S_{T_{i-1}}$) with $m=\vep 2^{i/2}$ and $k=\vep^{-2}$,
  we have for every $i> i_0$ that $\P(\tilde A_i(\vep)) \geq e^{-c_1/\eps^2}$.
  Now, for $t\le T_i$, we must have
  \[
    |S_t| \le \sum_{j=1}^i \floor{\vep 2^{j/2}} \le 4\vep 2^{i/2}.
  \]
    On the event $\tilde A_i(\vep)$ we have $2^i< T_i$, and therefore $[0,2^i]$ is $(8 \vep)$-slow for $\bs{S}$.  This shows that $\tilde{A}_i(\vep)\subset A_i(8\vep)$ for $i\ge i_0(\vep)$.  Therefore $\tilde A_i(\vep/8)\subset  A_i(\vep)$ and $\P(\tilde A_i(\vep/8)) \geq e^{-c_1/(\eps/8)^2}$  for $i\ge i_0(\vep/8)$ and the claim follows.    
  % Changing $\vep$ and the constant yields the claim.
\end{proof}

\medskip

A \textbf{dyadic interval} is an interval $I\subset\N$, of the form $[j 2^i,(j+1)2^i]$ for some $i,j\in\N$.
The following states that any point is highly likely to be contained in some slow dyadic interval.

\begin{prop}
  \label{P:slow_diad}

  There exists a constant $\delta$ such that for any $\vep\in (0,1)$, any $k \in \N$, and any $0\le s\le 2^k$ the probability that $s$ is not contained in any $\vep$-slow dyadic interval (for $\bs{S}$) in $[0,2^k]$ is at most
  \[
    \left(1-e^{-\frac{\delta}{\vep^2}} \right)^{k-9 -2|\log_2(\vep/4)|}.
  % {\frac{\log t}{2}-2-\nu(\varepsilon)}.
  \]
\end{prop}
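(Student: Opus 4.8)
The plan is to anchor a renewal construction at the point $s$ and run it in \emph{both} time directions, using the ordinary Markov property at time $s$ to decouple the two directions and the strong Markov property at a sequence of stopping times to decouple the successive dyadic scales. The geometric fact that makes this work is simple: if $I=[a,a+2^i]$ is the dyadic interval of length $2^i$ containing $s$ (pick either one if $s$ is a boundary point), then $I\subseteq[0,2^k]$ for every $i\le k$, since the level-$i$ interval containing $s$ is contained in the level-$k$ one, which is $[0,2^k]$; and moreover $I\subseteq[s-2^i,s+2^i]$ because $a\le s\le a+2^i$. Hence it suffices to produce, for as many scales $i$ as possible, \emph{independent} events of probability at least $e^{-\delta/\vep^2}$ on each of which the range of $\bs S$ over the symmetric window $[s-2^i,s+2^i]$ is at most $\vep\sqrt{2^i}$: on such an event the smaller interval $I$ is $\vep$-slow and contains $s$.

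Concretely, I would mimic the proof of \cref{L:more_slow} but started from $s$. Fix $c=8$, and define forward stopping times $T^+_0=s$, $T^+_i=\inf\{t>T^+_{i-1}:|S_t-S_{T^+_{i-1}}|\ge\lfloor(\vep/c)2^{i/2}\rfloor\}$, and backward stopping times $T^-_0=s$, $T^-_i=\sup\{t<T^-_{i-1}:|S_t-S_{T^-_{i-1}}|\ge\lfloor(\vep/c)2^{i/2}\rfloor\}$, halting the backward recursion at time $0$. By the strong Markov property the increments $T^+_i-T^+_{i-1}$ are independent over $i$, as are the backward increments, and since they are (negated) disjoint blocks of the i.i.d.\ steps of $\bs S$ the two families are independent of each other; hence the events $\tilde A_i:=\{T^+_i-T^+_{i-1}>2^i\}\cap\{T^-_{i-1}-T^-_i>2^i\}$ are mutually independent. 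Just as in \cref{L:more_slow}, on $\tilde A_i$ one has $|S_t-S_s|\le\sum_{j\le i}\lfloor(\vep/c)2^{j/2}\rfloor<4(\vep/c)2^{i/2}=(\vep/2)2^{i/2}$ for every $t\in[s-2^i,s+2^i]$, so the range of $\bs S$ over that window is $<\vep\sqrt{2^i}$ and $I$ is $\vep$-slow. Applying \cref{L:slow_pr} to the fresh walk after $T^{\pm}_{i-1}$, with $m=\lfloor(\vep/c)2^{i/2}\rfloor$ and $k$ of order $\vep^{-2}$, gives $\P(\tilde A_i)\ge e^{-\delta/\vep^2}$ for a suitable $\delta$, valid whenever $m\ge 2$, i.e.\ $2^i\gtrsim\vep^{-2}$; this threshold produces the lower cutoff $2|\log_2(\vep/4)|$ on the usable scales, while a further bounded number of scales (the constant $9$) is absorbed into floor-function slack, the first couple of renewal steps, and the treatment of $s$ near the ends of $[0,2^k]$, where the backward recursion runs out of room but then $I$ is forced to equal $[0,2^i]$ and \cref{L:more_slow} applies directly. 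Since the event that $s$ lies in no $\vep$-slow dyadic interval inside $[0,2^k]$ is contained in $\bigcap_i\tilde A_i^c$, the intersection running over at least $k-9-2|\log_2(\vep/4)|$ usable scales, independence gives the bound $\prod_i\bigl(1-\P(\tilde A_i)\bigr)\le\bigl(1-e^{-\delta/\vep^2}\bigr)^{k-9-2|\log_2(\vep/4)|}$.

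The step I expect to be the genuine obstacle, and the one that dictates the shape of the whole argument, is securing true independence across scales. The dyadic ancestors of $s$ are \emph{nested}, so the naive events ``$I$ is $\vep$-slow at scale $i$'' are strongly positively correlated, and a straightforward chaining estimate only yields a bound of the form $1-e^{-N\delta/\vep^2}$, which is far too weak; the exponential-in-$N$ improvement to $(1-e^{-\delta/\vep^2})^N$ comes precisely from replacing these events by the renewal events $\tilde A_i$, which probe the \emph{symmetric} window $[s-2^i,s+2^i]$ rather than $I$ itself and are therefore amenable to the double use of the Markov property. Once that device is in place, the remaining work --- the independence bookkeeping, the boundary cases when $s$ is close to $0$ or $2^k$, and tracking the constants --- is routine.
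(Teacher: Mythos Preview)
Your approach is the paper's: two-sided renewal anchored at $s$, independence across dyadic scales via the strong Markov property in each time direction, and the observation that the level-$i$ dyadic ancestor of $s$ lies in $[s-2^i,s+2^i]\cap[0,2^k]$. The one place where the paper is cleaner is the boundary: halting the backward recursion at $0$ makes the backward increments dependent (once $T^-_{i-1}=0$ all subsequent backward events are trivially empty, and the scale at which this first happens is random), and patching with \cref{L:more_slow} at those scales reintroduces correlations, since its events are anchored at $0$ rather than at $s$ and thus overlap with your forward renewal blocks. The paper sidesteps this entirely by extending $\bs S$ to a two-sided simple random walk before defining the $T^-_i$; then the full family $\{\tilde A_i\}_{i_0<i\le k}$ is genuinely independent with the uniform lower bound on $\P(\tilde A_i)$, and since the conclusion only concerns the range of $\bs S$ over the dyadic interval $I\subset[0,2^k]$, the extension is harmless.
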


\begin{proof}
Fix $\vep\in (0,1)$ and let $k
%\Maaak{The claim is trivial for $k\le 9+2|\log_2 \vep|$, so consider $k
>9+2|\log_2\vep|$.
  While the event we are interested in depends only on $(S_i)_{0\le i\le 2^k}$, it is convenient to extend the random walk $\bs{S}$ to a doubly infinite random walk, so that we can talk about intervals $[a,b]$ with $a<0$.
Let $i_0=i_0(\vep) = \lfloor \log_2 (2^8\vep^{-2}) \rfloor$, and consider the intervals $[s,s+2^i]$ for $i>i_0$.
  By \cref{L:more_slow} there are independent events $\tilde A_i$ (depending only on the steps of the walk after time $s$), with $\P(\tilde A_i) \geq e^{-c_3\vep^{-2}}$ such that on $\tilde A_i$ the interval $[s,s+2^i]$ is $\vep$-slow.
  Similarly, there are independent events $\tilde A'_i$ (depending only on the steps of the walk up to time $s$) with the same probability bound, on which the intervals $[s-2^i,s]$ are $\vep$-slow.  The collection $\tilde A_i$ is also independent of the collection $\tilde A'_i$ since they depend on disjoint sets of steps of the walk.
  
  Let $B$ be the event that there is no $i\in (i_0,k]$ so that $[s,s+2^i]$ and $[s-2^i,s]$ are both $\vep$-slow.
  Then 
  %the stochastic domination above implies that
  \[
    B \subset \bigcap_{i=i_0+1}^{k} \left( \tilde A_i \cap \tilde A'_i \right)^c.
  \]
  From the independence and bounds above we deduce
\begin{equation}
    \P(B) \le \left(1-e^{-2c_3\vep^{-2}} \right)^{k-i_0}\le \left(1-e^{-2c_3\vep^{-2}} \right)^{k-9-2|\log_2\vep|}.\label{Bbound}
\end{equation}

  Now, on the event $B^c$, there is some $i\in(i_0,k]$ so that $I_+:=[s,s+2^i]$ and $I_-:=[s-2^i,s]$ are both $\vep$-slow.
  This implies that $R^{\bs{S}}_{I_+}\le 1+\vep 2^{i/2}\le 2\cdot 2^{i/2}\vep$ and $R^{\bs{S}}_{I_-}\le 1+\vep 2^{i/2}\le 2\cdot 2^{i/2}\vep$.
  %  |\bs{S}_{[s,s+2^i]}| \leq \vep 2^{i/2}$ and also $|\bs{S}_{[s-2^i,s]}| \leq \vep 2^{i/2}$.\todo{actually it implies this with +1 on the right hand sides ,if $|\bs{S}_I|$ really means the size of the range as a set of vertices}
  It follows that $R^{\bs{S}}_{I_+\cup I_-}\le 4\vep 2^{i/2}$.
 % $|\bs{S}_{[s-2^i,s+2^i]}| \leq 2\vep 2^{i/2}$.
  Now, there is some dyadic interval $I\subset I_+\cup I_-$ of length $2^i$, with $s \in I\subset [0,2^k]$, and clearly $R^{\bs{S}}_{I}\le 4\vep 2^{i/2}$ as well.
  Thus $I$ is $(4\vep)$-slow for $\bs{S}$.
 % Now, if $s\in[0,2^k]$ then any dyadic interval of length at most $2^k$ including $s$ (and in particular $I$) is also contained in $[0,2^k]$, and thus the extension of $\bs{S}$ to a doubly infinite walk does not matter.
 
This shows that for any $\vep>0$ etc., the probability that $s$ is not contained in any $4\vep$-slow dyadic interval in $[0,2^k]$ is at most \eqref{Bbound}.  Applying this result to $\vep/4$ instead, yields the desired conclusion for $k>9+2|\log_2(\vep/4)|$.  Since the claim is trivial for $k\le 9+2|\log_2(\vep/4)|$ this completes the proof.
\end{proof}

\medskip

We shall rely on the following calculus exercise.

\begin{lemma}
  \label{L:g_property}
  Fix $a\in (0,1)$, and consider the function $g:\R\to\R$ defined by
  \[ g(\theta) := \theta a-\log(\cosh(\theta)). \]
  Then, the following statements are satisfied.
  \begin{itemize}[nosep]
  \item[(i)] The equation $g(\theta)=0$ has exactly two roots, $0$ and $\theta_a>0$.
  \item[(ii)] For $\theta>\theta_a$ we have that
    $g(\theta)<0$.
  \item[(iii)] The root $\theta_a$ satisfies the bounds
    \[ 2a \le \theta_a \le 2a\left(1+\frac{1}{(1-a)^2}\right). \]
  \end{itemize}
\end{lemma}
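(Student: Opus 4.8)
The plan is to treat $g$ by elementary one‑variable calculus. Since $g'(\theta)=a-\tanh\theta$ is strictly decreasing and $g'(0)=a>0$, the function $g$ is strictly increasing on $(-\infty,\theta^\ast]$ and strictly decreasing on $[\theta^\ast,\infty)$, where $\theta^\ast:=\operatorname{arctanh}(a)>0$ is the unique critical point. For (i), note $g(0)=-\log\cosh 0=0$, so $0$ is a root, and it is the only root in $(-\infty,\theta^\ast]$ by strict monotonicity there (and $0<\theta^\ast$). On $[\theta^\ast,\infty)$ we have $g(\theta^\ast)>g(0)=0$, while $g(\theta)=-(1-a)\theta+\log 2-\log(1+e^{-2\theta})\to-\infty$ as $\theta\to\infty$ because $a<1$; hence, by strict decrease and the intermediate value theorem, there is exactly one further root $\theta_a$, and it satisfies $\theta_a>\theta^\ast>0$. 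Part (ii) is then immediate: for $\theta>\theta_a$ we are strictly past the maximum, so $g(\theta)<g(\theta_a)=0$.

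For the bounds in (iii) I would exploit the sign information from (i): $g>0$ on $(0,\theta_a)$ and $g<0$ on $(\theta_a,\infty)$, so it suffices to check $g(2a)\ge 0$ (which gives $2a\le\theta_a$) and $g(\theta_1)\le 0$ for a suitable $\theta_1$ (which gives $\theta_a\le\theta_1$). For the lower bound, use the elementary inequality $\cosh x\le e^{x^2/2}$ --- immediate from comparing the power series of the two sides term by term, since $(2n)!\ge 2^n n!$ --- so $\log\cosh(2a)\le 2a^2$ and therefore $g(2a)=2a^2-\log\cosh(2a)\ge 0$. For the upper bound, I would establish the auxiliary inequality $\log\cosh\theta\ge\dfrac{\theta^2}{\theta+2}$ for all $\theta\ge 0$; granting it, $g(\theta)\le a\theta-\dfrac{\theta^2}{\theta+2}=\dfrac{\theta\,(2a-(1-a)\theta)}{\theta+2}$, which is $\le 0$ as soon as $\theta\ge\dfrac{2a}{1-a}$. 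Hence $\theta_a\le\dfrac{2a}{1-a}$, and since $1-a\le(1-a)^2+1$ we get $\dfrac{2a}{1-a}\le 2a\Big(1+\dfrac{1}{(1-a)^2}\Big)$, which is in fact stronger than the claimed bound, so there is room to spare.

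The one genuinely computational point, which I expect to be the main (minor) obstacle, is the Pad\'e‑type inequality $\log\cosh\theta\ge\frac{\theta^2}{\theta+2}$ on $[0,\infty)$. Setting $f(\theta)=\log\cosh\theta-\frac{\theta^2}{\theta+2}$, we have $f(0)=0$ and $f'(\theta)=\tanh\theta-1+\frac{4}{(\theta+2)^2}=-\frac{2}{e^{2\theta}+1}+\frac{4}{(\theta+2)^2}$; thus $f'(\theta)\ge 0$ is equivalent to $2e^{2\theta}\ge\theta^2+4\theta+2$, which in turn follows from $e^{2\theta}\ge 1+2\theta+2\theta^2$ (Taylor expansion with nonnegative remainder) since then $2e^{2\theta}\ge 2+4\theta+4\theta^2\ge\theta^2+4\theta+2$. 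So $f$ is nondecreasing on $[0,\infty)$ and $f\ge f(0)=0$. The reason for this particular choice of bound: a crude Taylor bound $\log\cosh\theta\ge\frac{\theta^2}{2}-\frac{\theta^4}{12}$ would suffice for small $a$ but degrades for large $\theta$, whereas $\log\cosh\theta\ge\theta-\log 2$ suffices for large $a$ but not for small $a$; the bound $\frac{\theta^2}{\theta+2}$ interpolates correctly between the quadratic and linear regimes and handles all $a\in(0,1)$ at once. The rest is bookkeeping.
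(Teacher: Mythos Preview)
Your proof is correct. Parts (i)--(ii) and the lower bound in (iii) are essentially the paper's argument: the paper uses strict concavity $g''(\theta)=-\cosh^{-2}\theta<0$ where you use strict monotonicity of $g'$, and both derive the lower bound from $\cosh\theta\le e^{\theta^2/2}$ (the paper applies this at $\theta_a$ to get $a\theta_a\le\theta_a^2/2$, you evaluate $g$ at $2a$, which is the same computation rearranged).

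The upper bound in (iii), however, is a genuinely different and somewhat sharper route. The paper proceeds in two stages: it first uses $\cosh\theta\ge e^\theta/2$ to obtain the crude bound $\theta_a\le\frac{\log 2}{1-a}$, then combines $\cosh\theta\ge 1+\theta^2/2$ with $\log(1+x)\ge\frac{x}{1+x}$ to get $\frac{\theta_a}{2+\theta_a^2}\le a$, and finally bootstraps by substituting the crude bound into the denominator. Your single Pad\'e-type inequality $\log\cosh\theta\ge\frac{\theta^2}{\theta+2}$ bypasses the bootstrap entirely and yields $\theta_a\le\frac{2a}{1-a}$, which is strictly stronger than the stated bound $2a\big(1+\frac{1}{(1-a)^2}\big)$. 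The tradeoff is that your auxiliary inequality requires a short derivative computation to verify, whereas the paper's ingredients are all off-the-shelf; but your approach is more direct and gives the better constant.
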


\begin{proof}
  {\it Parts $(i)$ and $(ii)$.} 
  Note that $g(0)=0$, $g'(0)=a>0$. Hence, there is a $\theta_1>0$ such that 
$g(\theta_1)>0$. Furthermore, since $a<1$ we also have that 
$\lim_{\theta\to\infty}g(\theta)=-\infty$. Hence there is a 
$\theta_2>\theta_1$
such that $g(\theta_2)<0$. By the intermediate value 
theorem it 
follows that there is a $\theta_a\in (\theta_1,\theta_2)$ (in 
particular $\theta_a>0$) such that 
$$
g(\theta_a)=\theta_a a -f(\theta_a)=0. 
$$
Furthermore, since $g''(\theta)=-(\cosh^2\theta)^{-1}$, it follows 
that $0$ and $\theta_a$ are the only roots of $g(\theta)=0$ and that 
$$
g(\theta)<0\quad{\rm for} \quad \theta>\theta_a. 
$$

\medskip

\noindent {\it Part $(iii)$.}
Note that for $\theta\in\mathbb R$ we have that $\cosh\theta\ge 
\frac{e^{\theta}}{2}$. Hence, since  $\theta_a 
a=\log(\cosh\theta_a)$ we see that

$$
\theta_a-\log 2\le\theta_a a. 
$$
From here we conclude that 

\begin{equation}
  \label{upperboundtheta}
\theta_a\le\frac{\log 2}{1-a}. 
\end{equation}
On the other hand 
we also have 
for $\theta\in\mathbb R$ the 
bounds $1+\frac{\theta^2}{2}\le\cosh(\theta)\le 
e^{\theta^2/2}$. Hence again using that  $\theta_a 
a=\log(\cosh\theta_a)$, we conclude that

$$
\log\left(1+\frac{\theta_a^2}{2}\right)\le\theta_a a\le\frac{\theta^2_a}{2}. 
$$
Now, since we also have for $x>-1$ that $\log(1+x)\ge\frac{x}{1+x}$, 
we get 
$$
\frac{\theta_a^2}{2+\theta_a^2}\le\theta_a a\le\frac{\theta^2_a}{2}. 
$$
Substituting the upper bound (\ref{upperboundtheta}) into the denominator of the left-hand side of the above inequality we conclude that 

$$
\frac{\theta_a}{2+\frac{2}{(1-a)^2}}
\le \frac{\theta_a}{2+\frac{(\log 2)^2}{(1-a)^2}}
\le a\le\frac{\theta_a}{2}. 
$$
Inverting these bounds we finish the proof.
\end{proof}

Our next lemma gives a bound on certain functional of the range of a simple random walk. 

\begin{lemma}
  \label{L:range_sup}
  There is a constant $\gamma > 0$ such that for all $l\ge 0$, 
  \[
    \E\left[\sup_{k\ge 0} \left\{R_k^{\sss{\bs{S}}}\sqrt{l} - \gamma(k+l)\right\}\right]\le 0. 
  \]
\end{lemma}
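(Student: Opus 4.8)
I want to show that $\E\big[\sup_{k\ge 0}\{R_k^{\sss\bs S}\sqrt l-\gamma(k+l)\}\big]\le 0$ for a suitable constant $\gamma$, uniformly in $l\ge 0$. The essential input is the large-deviation estimate behind Lemma \ref{L:slow_pr}: the range $R_k^{\sss\bs S}$ is at most $\tau_m$-type quantities, and more usefully, $\P(R_k^{\sss\bs S}\ge r)$ decays exponentially in $r^2/k$. Concretely, if $R_k^{\sss\bs S}\ge r$ then the walk has left some interval of length $r$ within time $k$, so by a standard reflection/Hoeffding bound $\P(R_k^{\sss\bs S}> r)\le \P(\max_{j\le k}|S_j|\ge r/2)\le 4e^{-r^2/(8k)}$. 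This is the only probabilistic fact I need; everything else is bookkeeping to turn it into the stated expectation bound.

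\textbf{Reduction to a single exponential moment.} Write $G:=\sup_{k\ge 0}\{R_k^{\sss\bs S}\sqrt l-\gamma(k+l)\}$. Since $G\ge R_0^{\sss\bs S}\sqrt l-\gamma l = \sqrt l-\gamma l$, the statement is only nontrivial once $l$ is not too small; for $l\le l_0$ (a constant depending on $\gamma$) one checks directly using $R_k^{\sss\bs S}\le k+1$ that $R_k^{\sss\bs S}\sqrt l-\gamma(k+l)\le (k+1)\sqrt{l_0}-\gamma k-\gamma l$ is bounded and its positive part has small expectation — actually the clean way is to handle all $l$ uniformly via the exponential tail below, so I will not split cases. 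The key observation is that $G>u$ forces, for some $k$, $R_k^{\sss\bs S}>\frac{\gamma(k+l)+u}{\sqrt l}\ge \frac{2\sqrt{\gamma(k+l)}\sqrt{\gamma l}+u}{\sqrt l}\ge 2\sqrt{\gamma k}+ 2\sqrt{\gamma l}\cdot\frac{\sqrt{\gamma l}}{\sqrt l}$, using $a+b\ge 2\sqrt{ab}$; so in particular $R_k^{\sss\bs S}\ge 2\sqrt{\gamma k}$ and $R_k^{\sss\bs S}\sqrt l\ge \gamma(k+l)$. Plugging $r=R_k^{\sss\bs S}$ into the tail bound, $\P(R_k^{\sss\bs S}> r)\le 4e^{-r^2/(8k)}$, and on the relevant event $r^2\ge 4\gamma k$ (so the exponent is $\le -\gamma/2$) while also $r\ge (\gamma(k+l)+u)/\sqrt l$. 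The plan is to sum over $k$: for each $k$, $\P(R_k^{\sss\bs S}\sqrt l-\gamma(k+l)>u)\le 4\exp\!\big(-\frac{(\gamma(k+l)+u)^2}{8k l}\big)$; choosing $\gamma$ large makes the prefactor $\gamma^2/(8l)\cdot k$ (from the $(\gamma k)^2$ term, relative to $8kl$... ) — this is where I must be careful, see below — dominate and give a geometric series in $k$, and the $u$-dependence contributes $e^{-cu/\sqrt l}\cdot(\dots)$, integrable in $u$.

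\textbf{The main obstacle.} The delicate point is the interplay of the three scales $k$, $l$, and $u$ in the exponent $(\gamma(k+l)+u)^2/(8kl)$, and making the bound on $\E[G^+]=\int_0^\infty \P(G>u)\,du$ come out $\le 0$ once we subtract — wait, $G$ can be negative, so really I want $\E[G]=\E[G^+]-\E[G^-]\le 0$, i.e. $\E[G^+]\le \E[G^-]$, and the cheapest route is $\E[G^+]\le$ (something that $\to 0$ as $\gamma\to\infty$) while $\E[G^-]\ge \gamma l - \sqrt l\ge $ a positive constant for $l\ge l_0$, or for small $l$ handle separately. Expanding $(\gamma(k+l)+u)^2\ge \gamma^2k^2+\gamma^2 l^2 + u^2 + 2\gamma^2 kl + 2\gamma ku+2\gamma lu$; dividing by $8kl$, the $2\gamma^2kl/(8kl)=\gamma^2/4$ term gives a uniform factor $e^{-\gamma^2/4}$ pulled out of everything, the $\gamma^2 k^2/(8kl)=\gamma^2 k/(8l)$ term gives geometric decay in $k$ with ratio $e^{-\gamma^2/(8l)}$ — problematic when $l$ is large! — but then the $2\gamma ku/(8kl)=\gamma u/(4l)$ and $\gamma^2 l^2/(8kl)=\gamma^2 l/(8k)$ terms must be used instead. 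The honest approach: treat $k\le l$ and $k> l$ separately. For $k\le l$ use the $\gamma^2 l^2/(8kl)=\gamma^2l/(8k)\ge\gamma^2/8$ term (since $k\le l$) together with $u^2/(8kl)\ge u^2/(8l^2)$; for $k>l$ use $\gamma^2k^2/(8kl)=\gamma^2k/(8l)$, and here since $k>l$ this is $\ge\gamma^2/8$, giving geometric decay. In both regimes, after pulling out $e^{-c\gamma^2}$, summing over $k$ and integrating over $u$ produces $C(l)e^{-c\gamma^2}$ with $C(l)$ polynomial in $l$, and then — once more — for large $l$ we compare against $\E[G^-]\gtrsim \gamma l$. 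I expect the cleanest packaging is: first show $\E[G^+]\le C\,l\,e^{-c\gamma^2}$ for an absolute constant $C$ and all $l\ge 1$ (and a trivial bound for $l<1$), then choose $\gamma$ large enough that $C\,l\,e^{-c\gamma^2}\le \gamma l -\sqrt l$ for all $l\ge 1$ simultaneously, which is possible since the left side has an extra $e^{-c\gamma^2}$ factor. The hard part is purely this quantitative juggling of the exponent across the regimes $k\lessgtr l$; the probabilistic content is a one-line Hoeffding bound.
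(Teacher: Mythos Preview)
Your approach has a genuine gap: the union bound over $k$ is too wasteful, and the accounting for $\E[G^-]$ is incorrect.

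\textbf{The union bound loses a factor of $l$.} Write $G=\sqrt{l}\,\bar S_a-\gamma l$ with $a=\gamma/\sqrt l$ and $\bar S_a=\sup_{k\ge 0}\{R_k^{\sss{\bs S}}-ak\}\ge 0$, so that $\E[G]\le 0$ is equivalent to $\E[\bar S_a]\le \gamma\sqrt l=\gamma^2/a$. Your route is $\P(\bar S_a>t)\le \sum_{k\ge 1} 4e^{-(ak+t)^2/(8k)}$, and however you slice the exponent the sum over $k$ contributes an extra factor of order $1/a^2$ (there are $\asymp 1/a^2$ values of $k$ near the minimum $k\sim t/a$), yielding $\E[\bar S_a]=O(1/a^3)$. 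In the $G$-scaling this is $\E[G^+]\le C\,l^{2}\,e^{-c\gamma^2}$, not the $C\,l\,e^{-c\gamma^2}$ you assert. Since $\gamma$ must be an absolute constant, an $l^2$ bound cannot be beaten by anything of order $l$, so the argument collapses for large $l$.

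\textbf{The comparison with $\E[G^-]$ is backwards.} From $k=0$ one has $G\ge \sqrt l-\gamma l$ deterministically, hence $G^-\le \gamma l-\sqrt l$ and $\E[G^-]\le \gamma l-\sqrt l$; your claimed inequality $\E[G^-]\ge \gamma l-\sqrt l$ goes the wrong way. Lower-bounding $\E[G^-]$ would require showing $G$ is typically very negative, i.e.\ that $R_k\le ak+b'$ for all $k$ with high probability --- essentially the original problem again.

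\textbf{What the paper does instead.} The fix is to avoid the union bound entirely: the paper bounds the \emph{line-crossing} event $\{\exists k:S_k\ge ak+b\}$ in one shot via optional stopping for the exponential martingale $M_n=e^{\theta_a(S_n-an)}$ (with $\theta_a$ the positive root of $a\theta=\log\cosh\theta$, so $\theta_a\sim 2a$), obtaining $\P(\tau_{a,b}<\infty)\le e^{-\theta_a b}$. This gives $\P(\bar S_a>t)\le 2e^{-\theta_{a/2}t/2}$ and hence $\E[\bar S_a]=O(1/a)$ --- exactly the factor of $1/a^2$ better than your union bound. Then $\E[G]=\sqrt l\,(\E[\bar S_a]-\gamma\sqrt l)\le \sqrt l\,(C\sqrt l/\gamma-\gamma\sqrt l)\le 0$ once $\gamma$ exceeds an absolute constant, with no need to split $G=G^+-G^-$.
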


\begin{proof}
  The proof has 4 steps:
  \begin{enumerate}[nosep]
  \item Estimate the probability that the walk ever exceeds $at+b$.
  \item Estimate $\sup \left\{R^{\sss{\bs{S}}}_k-(ak+b)\right\}$.
  \item Prove the claim for some $\gamma$ and large $l$.
  \item Upgrade to some (other) $\gamma$ and all $l$.
  \end{enumerate}
  
  \paragraph{Step 1.}
  Let $a\in (0,1)$ and $\theta_a$ be the  positive root of $a\theta=\log\cosh(\theta)$ from \cref{L:g_property}.  Define 
   \[ M_n(a) = e^{\theta_a S_n-n \log\cosh(\theta_a)}=e^{\theta_a(S_n-na )+n g(\theta_a)}=e^{\theta_a(S_n-na)}. \]
  Then $(M_n(a))_{n \ge 0}$ is a Martingale. 
%  Let $b>0$ and $a\in(0,1)$, and let $\theta=\theta_a$ be the positive root of $a\theta=\log\cosh(\theta)$ from \cref{L:g_property}.
%  Let $f(\theta) = \log(\cosh(\theta))$.
%  Denote
 % \[ M_n(\theta) = e^{\theta S_n-n f(\theta)}=e^{\theta(S_n-na \theta)+n g(\theta)}. \]
%  Then $(M_n(\theta))_{n \ge 0}$ is a Martingale.
%  Define 
  Let $b>0$ and define the (possibly infinite) stopping time $\tau = \tau_{a,b}$ by 
  \[ \tau_{a,b} = \inf\{k\ge 0: S_k\ge ak+b\}. \]
  Up to and (when $\tau$ is finite) including at $\tau$ we have $S_n < an+b+1$.
  It follows that the stopped martingale $M_{n\wedge \tau}(a)$ is bounded by $e^{\theta_a}(b+1)$.
  We can thus apply the optional stopping theorem to $M_n(a)$ at the stopping time $\tau$.
  
  On the event that $\tau$ is finite we have 
  \begin{equation}
    a\tau + b \le S_{\tau}\le a\tau+b+1, \label{Ytau}
  \end{equation}
  so on this event we have
  \[ e^{\theta_a b} \leq M_\tau{(a)} \leq e^{{\theta_a} (b+1)}. \]
  Since $S_n$ grows sub-linearly, we also have that $M_n{(a)}\to 0$ as $n\to\infty$ almost surely.
  The optional stopping theorem now implies that
  \[ 1 = E [M_\tau(a)]
    = %\P(\tau < \infty) 
    E[M_\tau(a)\indic{\tau<\infty}]. \]
  The bounds on $M_\tau(a)$ imply that
  \begin{equation}
    \label{oo}
    e^{-\theta_a (b+1)}\le P(\tau_{a,b}<\infty)\le e^{-\theta_a b}.
  \end{equation}

  \paragraph{Step 2.}
  Let $a\in (0,1)$ and 
  \begin{equation}
    \label{es}
    \bar{S} := \sup_{k\ge 0} \left\{R^{\sss{\bs{S}}}_k-ak\right\}.
  \end{equation}
  Note that the supremum in (\ref{es}) is attained since $a>0$ and the range grows sublinearly (e.g.~by the law of the interated logarithm),
  and that $\bar{S} \geq 0$ (from $k=0$).
  %\todo{My understanding is
    %that $R^{\bs{S}}_0=1$, so I guess $\bar{S}$ is at least 1}
  Now, for each $j\in\N$, consider the event
  \[ A_j := \left\{\bar{S}\ge j\right\}. \]
  Since the supremum in the definition of $\bar{S}$ is attained, then for $j\ge 0$, $\bar{S}\ge j$ implies that for some $n$ we have $|S_n| \ge (an+j)/2$.
  Thus
  \[
    \{\tau_{a,j}<\infty\} \subset A_j
    \subset \{\tau_{a/2,j/2}<\infty\} \cup \{\tau'_{a/2,j/2}<\infty\},
  \]
  where $\tau'$ is the same stopping time for $-\bs{S}$.
  It follows that for $j\ge 0$,
  \[ P(\bar{S}\ge j)\le 2 P(\tau_{a/2,{j/2}}<\infty). \]

  We then have from the upper bound in (\ref{oo}) that $\bar{S}$ satisfies
  \begin{align*}
    E\big[\bar{S}\big]
    &= \int_0^\infty P(\bar{S}\ge t)dt \le 2 \int_0^\infty e^{-\theta_{a/2}(t-1)/2}dt= \frac{4}{\theta_{a/2}} e^{(\theta_{a/2})/2}.
  \end{align*}
  Translating by $b$, it follows that
  \begin{equation}
    \label{thetaab}
    E\left[\sup_{k\ge 0}\{R^{\sss{\bs{S}}}_k-(ak+b)\}\right]
    \le -b + \frac{4 e^{(\theta_{a/2})/2}}{\theta_{a/2}} .
  \end{equation}

  \paragraph{Step 3.}  
We claim that there is an $L>0$ such that 
%for all $l\ge L$ we have that
  \begin{equation}
    \label{int2}
 {\sup_{l\ge L}}   E \left[\sup_{k\ge0} \left\{R_k^{\sss{\bs{S}}}\sqrt{l}-{4}(k+l)\right\}\right] \le 0. 
  \end{equation}
  To see this, divide by $\sqrt{l}$, and note that it suffices to prove that {for $l\ge L$}
  \[
    E \left[\sup_{k\ge0} \left\{R_k^{\sss{\bs{S}}}-({4}/\sqrt{l})k+{4}\sqrt{l})\right\}\right] \le 0.
  \]
  This is bounded in (\ref{thetaab}) with $a={4}/\sqrt{l}$ and $b={4}\sqrt{l}$ by
  \[
    - {4}\sqrt{l} + \frac{{4} e^{(\theta_{{2}/\sqrt{l}})/2}}{\theta_{{2}/\sqrt{l}}} .
  \]
  From \cref{L:g_property}(iii), for $l$ large enough we have ${4}/\sqrt{l} \le \theta_{{2}/\sqrt{l}} \le {10}/\sqrt{l}$, so the bound becomes
{  \[-4\sqrt{l}+\frac{4e^{5/\sqrt{l}}}{4/\sqrt{l}}=\sqrt{l}[-4+e^{5/\sqrt{l}}].\]}
%  \[
%    - 2\sqrt{l} + \frac{2 e^{2/\sqrt{l}}}{2/\sqrt{l}} =
%    \sqrt{l} (-2 + e^{2/\sqrt{l}}),
%  \]
This is clearly negative for $l$ large enough, which verifies the claim.
%   we have that this expectation is 
%   notice 
%   that to prove (\ref{int2}), it is enough to find 
%   $c'$ and $L>0$, choosing $a=c'/\sqrt{l}$ and $b=c'\sqrt{l}$  in (\ref{thetaab}), such that
%   \begin{equation}
%     \label{oi}
%     c'\sqrt{l}-    \frac{4}{\theta_{c'/2\sqrt{l}}}e^{\theta_{c'/2\sqrt{l}}/2}\ge 0,\quad \text{ for all }l\ge L.
%   \end{equation}
% Let $c'>2$. By part $(iii)$ of \cref{L:g_property},  there is an $L$  such that for $l\ge L$ we
% have that
% $$\frac{c'}{\sqrt{l}}\le\theta_{c'/(2\sqrt{l})}\le\frac{2}{\sqrt{l}}c'.
% $$
% Hence, for $l\ge L$,
% $$
% c'\sqrt{l}-    \frac{4}{\theta_{c'/2\sqrt{l}}}e^{\theta_{c'/2\sqrt{l}}/2}
% \ge\sqrt{l}\left(c'-\frac{4}{c'}e^{\frac{c'}{\sqrt{l}}}\right).
% $$
% From the above inequality we see that since $c'>2$, we will be able
% to find $L$ such that (\ref{oi}) is satisfied as long as $l\ge L$,
% which proves (\ref{int2}).

% \medskip

  \paragraph{Step 4.}
  Applying the bound \eqref{thetaab} with $a=4/\sqrt{l}$ and $b=4\sqrt{l}$ shows that for any $L$ there exists a constant $C=C(L)$ such that 
 % From the bound (\ref{int2}) of Step 3, note that there is a constant $C\ge 0$ such that
  \begin{equation}
    \nonumber
    \sup_{1\le l\le L}
    \E \left[\sup_{k\ge0} \left\{R_k^{\sss{\bs{S}}}\sqrt{l}-{4}(k+l)\right\}\right]\le C. 
  \end{equation}
  From here we can see that, with {$\gamma:=4+C$}
  \[
    \sup_{1\le l\le L}E\left[\sup_{k\ge0}
      \left\{R_k^{\sss{\bs{S}}}\sqrt{l}-{\gamma}(k+l)\right\}\right]\le 0. 
  \]
  Since the quantity $R_k^{\sss{\bs{S}}}\sqrt{l}-x(k+l)$ is monotone decreasing in $x$, we conclude from  \eqref{int2} that  
\[
    \sup_{l\ge L}E\left[\sup_{k\ge0}
      \left\{R_k^{\sss{\bs{S}}}\sqrt{l}-{\gamma}(k+l)\right\}\right]\le 0,
  \]  
  which completes the proof.
\end{proof}

\subsection{Proof of Theorem \ref{upper-bound-theorem}}

Consider two independent $1$-dimensional simple random walks $\bs{X}$ and $\bs{Y}$ and a timing rule $\bs Q$ that is $\bs Y$-adapted.
Consider the two-dimensional random walk $\bs Z$ given by $\bs{X}$, $\bs{Y}$, and the rule ${\bs Q}$.
Recall that for each $t \in \Z_+$, $N_t$ denotes the number of $\bs{X}$-steps that $\bs Z$ takes by time $t$.
Since there will be multiple time scales (for 1-dimensional and 2-dimensional walks respectively) in what follows, we will distinguish between intervals on these time scales by using the subscript notation 
$[a,b]_{\sss{\bs{X}}}$ or $[a,b]_{\sss{\bs{Z}}}$.
The $\bs{X}$ walk takes $b-a$ steps during the interval $[a,b]_{\sss{\bs{X}}}$ and $N_v-N_u$ steps during the interval $[u,v]_{\sss{\bs{Z}}}$.
The range of a walk $\bs{X}$ over an interval $I = [a,b]_\ssX$ is denoted $\mc{R}^{\bs{X}}_I$, and its cardinality is $R^{\bs{X}}_I$.
  (The trivial bound for this is $R^{\bs{X}}_I \leq |I|+1$.)  Similarly the range of the walk $\bs{Z}$ over the interval $I=[a,b]_{\bs{Z}}$ is denoted by $\mc{R}^{\bs{Z}}_I$ and its cardinality by $R^{\bs{Z}}_I$.

Recall that a 1-dimensional simple random walk $\bs{X}$  is $\vep$-slow on an interval  $J_{\sss{\bs{X}}}$
if $R^{\bs{X}}_{J_{\bs{X}}}-1\le \vep \sqrt{|J_{\bs{X}}|}$.

The main steps in the proof are as follows. 
\begin{enumerate}[nosep]
\item Decompose the interval $[0,N_t]_{\sss{\bs{X}}}$ into $\vep$-slow intervals and a complementary set, and derive from these a certain decomposition of $[0,t]_\ssZ$.
\item Estimate the range of $\bs{Z}$ during the slow intervals.
\item Estimate the range of $\bs{Z}$ in the remaining times.
\item Estimate the size of the complementary set.
\item Combine the bounds, set the parameters and deduce the result.
\end{enumerate}

\paragraph{Step 1.}
For $\vep>0$ and an interval $I_\ssX$ 
%be some interval in the time scale of the ${\bs X}$-random walk. 
%Let us 
define the collections of intervals
\[
  \mathcal D_{\vep, I_{\sss{\bs{X}}}}
  :=\{J_{\sss{\bs{X}}}\subset I_{\sss{\bs{X}}}
  : J_{\sss{\bs{X}}} \text{ is a dyadic $\vep$-slow interval for $\bs{X}$}\}.
\]
Also define the subcollection of ``maximal'' intervals
\[
  \mathcal A_{\vep, I_{\sss{\bs{X}}}}
  := \{
  J_{\sss{\bs{X}}}\in\mathcal D_{\vep, I_{\sss{\bs{X}}}}: \text{ there is no }
  J'_{\sss{\bs{X}}}\in\mathcal D_{\vep, I_{\sss{\bs{X}}}} \text{ such that } J_{\sss{\bs{X}}}\subsetneq J'_{\sss{\bs{X}}}\}.
\]

Any two dyadic intervals are either disjoint (except possibly their endpoints) or one is contained in the other.
%Since every element of $\mc{A}_{\vep, I_{\sss{\bs{X}}}}$ is a
%\emph{dyadic} interval, it is easy to check
It follows that the intervals in $\mathcal A_{\vep, I_{\sss{\bs X}}}$ are step-disjoint (i.e. two intervals in $\mc{A}_{\vep, I_{\sss{\bs{X}}}}$ may share an endpoint, but no more). 
The union of the intervals in $\mc{A}_{\vep, I_{\sss{\bs{X}}}}$ is denoted by 
\[
  A_{\vep, I_{\sss{\bs{X}}}}
  := \bigcup_{J_{\sss{\bs{X}}}\in\mathcal A_{\vep, I_{\sss{\bs{X}}}}} J_{\sss{\bs{X}}}.
\]
Note that whenever $I_{\sss{\bs{X}}}\subset I'_{\sss{\bs{X}}}$, we have that
\begin{equation}
  \label{mon}
  A_{\vep, I_{\sss{\bs{X}}}}\subset A_{\vep, I'_{\sss{\bs{X}}}}.
\end{equation}

In what follows we will work with the collection $\mathcal A_{\vep, [0,N_t]_\ssX}$.
In order to simplify notation, we will write $\mathcal A_{\vep, N_t}$ and $A_{\vep, N_t}$ instead of $\mathcal A_{\vep, [0,N_t]_{\sss{\bs{X}}}}$ and $A_{\vep, [0,N_t]_{\sss{\bs{X}}}}$ respectively.

The walk $\bs{Z}$ interlaces the steps of $\bs{X}$ with those of $\bs{Y}$.
In light of this, there is a natural correspondence between intervals of $\bs{X}$, intervals of $\bs{Y}$, and intervals of $\bs{Z}$.
For an interval $I=[a,b]_\ssZ$ we let $N_I := [N_{a},N_{b}]_\ssX$, and let $M_I=[M_a,M_b]_\ssY$.
The converse mapping is a bit more delicate, since there is some flexibility in how to handle the endpoints of the intervals.
%To make this precise, each step of $\bs{X}$ is followed by some (possibly $0$) steps of $\bs{Y}$.
%An interval of $\bs{X}$ will correspond to the interval of $\bs{Z}$ including all of these (and nothing else).

% Fix $t>0$.
Formally, for  each interval $J\in[0,\infty)_\ssX$ %$J_{\sss{\bs{X}}}\in\mathcal A_{\vep,N_t}$
we associate an interval $I \in [0,\infty)_\ssZ$ corresponding to the time
passed in the timeline of ${\bs Z}$.
%\todo[inline]{This is not right.  If we want $t$ to be the endpoint of the $\bs{Z}$-time interval, then the intervals for $\bs{X}$ should be $[0,N_{t}]_{\sss{\bs{X}}}$ rather than $[0,t]_{\sss{\bs{X}}}$.  Then we require $N_t>\vep^{-2}$... I think perhaps we want to put a subscript $t$ on $\mc{D}_\vep$ and $\mc{A}$, and then the relevant sets are $\mc{D}_{\vep, N_t}$, and $\mc{A}_{\vep, N_t}$?}
Suppose that $J_\ssX=[a,b]_\ssX$, for some  $a\le b \le 2^j$.
Let $U_-=\inf\{u\ge 0: N_u=a\}$ and $U_+=\sup\{u\ge 0: N_u=b\}$.
We now define the map $\phi$ by $\phi(J_\ssX)=[U_-,U_+]_\ssZ$.
Note that $\phi(J_\ssX)$ is a random interval.
 
We are in interested in particular in the images of slow dyadic intervals.
For each $j\ge 0$, let
\[\cB_{j}=\big\{ \phi(J) : J \in \mathcal{A}_{\vep,[0,2^j]_\ssX} \big\} \qquad \text{ and } \qquad
  B_{j} = \bigcup_{I\in\cB_{j}} I.
\]
The decomposition of the timeline of $\bs Z$ is into the intervals of $\cB_j$ and the complement of those.

\paragraph{Step 2.}

We will prove that for the constant $\gamma$ from \cref{L:range_sup},
for every $j\ge 0$ and  $I=[a,b]_\ssX$ we have 
\begin{equation}
  \label{ss}
  \E \Big[ \big(R^{\bs{Z}}_{\phi(I)} - \gamma \vep |\phi(I)| \big) \indic{I\in\cA_{\eps,[0,2^j]_\ssX}} \Big] \le 0.
\end{equation}
%where $|\bs{Z}_{\phi(I)}|$ denotes the size of the range of $\bs{Z}$ during the interval $\phi(I)\subset [0,t]_\ssZ$, and $|\phi(I)|=b-a$ denotes the length of $I$.
%\note{why two different notations for range?}
Roughly speaking this says that for a slow interval for $\bs{X}$, the expected range of $\bs{Z}$ in the corresponding time interval is comparably small.  Note that if $b>2^j$ the indicator is 0 so \eqref{ss} is trivial.
Otherwise, let $\phi(I) = [U_-,U_+]_\ssZ$, so that $a=N_{U_-}$ and $b=N_{U_+}$.
For $u_-\le u_+$ define $I'_{u_-,u_+}=
\left[M_{u_-},M_{u_+}\right]_\ssY$. 
%The corresponding interval of $\bs{Y}$, given
%will be denoted by $I'_{U_-,U_+}$. 
Note that $I'_{U_-,U_+}$ is a
random interval.
To simplify notation we will in what follows sometimes drop the
subindices
and write $I'$ instead of $I'_{U_-,U_+}$.
At any time $i\in[U_-,U_+]_\ssZ$ the $X$ coordinate of $Z_i$ is some value of $\bs{X}$ taken during $I$, and the $Y$ coordinate is a value of $\bs{Y}$ during $I'$.
Consequently, $\bs{Z}$ is constrained to a rectangle and we have the bound
\[
R^{\bs{Z}}_{\phi(I)}\le R^{\bs{X}}_{I} \cdot R^{\bs{Y}}_{I'}.
 % |\bs{Z}_{\phi(I)}| \leq |\bs{X}_I| \cdot |\bs{Y}_{I'}|.
\]
Suppose that 
%$I\in \mc{A}_{\vep,N_t}$ so that 
$I$ 
%\subset [0,N_t]_{\bs{X}}$ 
is $\vep$-slow  for $\bs{X}$.  
%is $\vep$-slow for $\bs{X}$.
Then 
\[
R^{\bs{Z}}_{\phi(I)}
  \leq \eps \sqrt{|I|} \cdot R^{\bs{Y}}_{I'}.
\]
Note that $|\phi(I)| = |I| + |I'|$.
%Let $R^\ssY_k$ be the size of the range of $k$ steps of $\bs{Y}$, starting from $M_u$.
%Then if $k=|I'|$ 
Then  with $\gamma$ from \cref{L:range_sup} we have 
\begin{align*}
 R^{\bs{Z}}_{\phi(I)} - \gamma \vep |\phi(I)|
  &\leq \eps( \sqrt{|I|} \cdot R^{\bs{Y}}_{I'}- \gamma (|I|+|I'|)).
 % \vep \sqrt{|I|} \cdot |\bs{Y}_{I'}| - \gamma \vep(|I|+|I'|)\\
%  &= \vep \left( \sqrt{|I|} \cdot R^\ssY_k - \gamma \vep(|I|+k) \right) \\
%  &\leq \vep \sup_k \left( \sqrt{|I|} \cdot R^\ssY_k - \gamma\vep(|I|+k) \right).
\end{align*}
Taking expectations we have that 
\begin{align*}
&\E\big[ (R^{\bs{Z}}_{\phi(I)} - \gamma \vep |\phi(I)|)\indic{I
                 \in \mc{A}_{\vep,[0,2^j]_{\ssX}}}\big]\\&  \le \sum_{u_-=0}^\infty \vep \E \Big[\Big(\sqrt{|I|} \cdot R^{\bs{Y}}_{I'_{U_-,U_+}}- \gamma (|I|+|I'_{U_-,U_+}|) \Big)\indic{I \in \mc{A}_{\vep,[0,2^j]_\ssX}}\indic{U_-=u_-}\Big]\\
&  \le \sum_{ u_-=0}^\infty\vep \E \Big[\E \big[\sqrt{|I|} \cdot R^{\bs{Y}}_{I'_{u_-,U_+}}- \gamma (|I|+|I'_{u_-,U_+}|) \big|\mc{G}_{u_-} \big]\indic{I \in \mc{A}_{\vep,[0,2^j]_\ssX}}\indic{U_-=u_-}\Big],
\end{align*}
where we have used the fact that both indicator functions are $\mc{G}_{u_-}$-measurable.  
Since the timing rule is $\bs{Y}$-adapted, conditioned on $\mc{G}_{u_-}$ the subsequent steps of $\bs{Y}$ are still an independent simple random walk.  Thus, using \cref{L:range_sup} we have a.s.,
\[\E \Big[\sqrt{|I|} \cdot R^{\bs{Y}}_{I'_{u_-,U_+}}- \gamma (|I|+|I'_{u_-,U_+}|) \Big|\mc{G}_{u_-} \Big]
  \leq 0.
\]
This completes the proof of \eqref{ss}.

\paragraph{Step 3.}

Given a subset $A\subset [0,\infty)_{\bs{Z}}$, denote by
\[
  \mathcal R^{\bs{Z}}_A=\{x\in\mathbb Z^2: Z_n=x, \text{ for some }\ n\in
  A\}
\]
the range of the random walk at times in $A$, and its cardinality by $R^{\bs{Z}}_A$ (thus generalizing this notation from intervals to arbitrary sets).
If we sum \eqref{ss} over all dyadic intervals in $[0,2^j]_\ssX$ (with contributions only from intervals in $\cA_{\vep,[0,2^j]_\ssX}$), we get the following:
\begin{align}
  \E[R^{\bs{Z}}_{B_{j}}]\le 
  \E\Big[ \sum_{I\in \cA_{\vep,[0,2^j]_\ssX}} R^{\bs{Z}}_{\phi(I)}\Big]
  &\leq  \gamma\varepsilon\E\Big[\sum_{I\in \cA_{\vep,[0,2^j]_\ssX}}
  |\phi(I)|\Big]\\
  &
  \le \gamma\vep\sum_{i=0}^{2^j}\E\Big[\sigma_{i+1}-\sigma_i\Big]\le 2K \gamma \vep 2^j.
  \label{eq:slow_range}
\end{align}
Here, the third inequality holds since the intervals of $\cA_{\vep,[0,2^j]_\ssX}$ are step-disjoint within $[0,2^j]_\ssX$,
while the last inequality uses condition (\ref{eee}).

\paragraph{Step 4.}  
To bound the range of $\bs Z$ in the complement of the slow intervals, first note that
\begin{align}
\E \left[\#([0,\sigma_{2^j}]_\ssZ\setminus B_j)\right]&\le \sum_{i=0}^{2^j}\E\left[\sum_{s=\sigma_i}^{\sigma_{i+1}-1}\indic{s \notin B_j}\right].
\end{align}
Now $s \in B_j$ if and only if $s \in \phi(J)$ for some $J \in \mc{A}_{\vep,[0,2^j]_{\ssX}}$, and  from the definition of $\phi$ we have for $s\in [\sigma_i,\sigma_{i+1})$ that $s\in \phi(J)$ if and only if $\sigma_i \in \phi(J)$.  It follows that
\begin{align}
  \nonumber
  \E \left[\#([0,\sigma_{2^j}]_\ssZ\setminus B_j)\right]
  &\le \sum_{i=0}^{2^j}\E\left[ \indic{i\notin A_{\varepsilon,[0,2^j]_\ssX}} (\sigma_{i+1}-\sigma_i)\right]\\
  \label{step42}
  &= \sum_{i=0}^{2^j}\E\left[\indic{i\notin A_{\varepsilon,[0,2^j]_\ssX}}    \E\left[\sigma_{i+1}-\sigma_i|\mathcal G_{\sigma_i}\right]\right]\\
  & \leq K 2^{j+1} \left( 1 - e^{-\delta/\eps^2} \right)^{j-9-2|\log_2(\eps/4)|}.
\end{align}
In the second inequality we used the fact that the indicator 
is $\mathcal G_{\sigma_i}$-measurable (slow intervals are determined purely by the trajectory of $\bs X$), and in the last inequality we used \cref{P:slow_diad} and Condition (\ref{eee}).

\paragraph{Step 5.}
Combining the bounds of steps 3 and 4 we get that
\begin{align*}
  \E\left[
    R^{\bs{Z}}_{[0,\sigma_{2^j}]_{\bs{Z}}}
  \right]& \le
  \E\left[R^{\bs{Z}}_{B_j}\right]+\E \left[\#([0,\sigma_{2^j}]_\ssZ\setminus
    B_j)\right]\\
&  \le 2K\gamma\varepsilon 2^j+
    K 2^{j+1} \left( 1 - e^{-\delta/\eps^2} \right)^{j-9-2|\log_2(\eps/4)|}.
\end{align*}
Now, given $t\ge 0$, choose $j\ge 0$ such that
\begin{equation}
  \label{jt}
  2^{j-1} \le t < 2^j.
\end{equation}
By definition, the number of steps made by the ${\bs{X}}$ coordinate up to time $\sigma_{2^j}$ in the timeline of $\bs Z$ is $2^j$.
Thus we have that $t\le 2^j\le\sigma_{2^j}$.
Therefore,
\begin{align}
\nonumber
\E\left[R^{\bs{Z}}_{[0,t]_{\bs{Z}}}\right]&\le
\E\left[R^{\bs{Z}}_{[0,\sigma_{2^j}]_{\bs{Z}}} \right]\\
\label{s61}
&\le K 2^{j+1} \left(\gamma\varepsilon +
    \left(1 - e^{-\delta/\eps^2} \right)^{j-9-2|\log_2(\eps/4)|} \right).
\end{align}

Since $\gamma,\delta$, and $K$ are constants, all that remains is to find $\eps$ so as to minimize this bound.
Set $\eps = \sqrt{2\delta/\log j}$, so that $(1 - e^{-\delta/\eps^2}) = 1-1/\sqrt{j} \leq e^{-1/\sqrt{j}}$.
Thus $\eps \sim c/\sqrt{\log\log t}$ as $t\to\infty$.  
Since from (\ref{jt}) we see that $j\to\infty$ as $t\to\infty$, we see that for $t$ large enough, $j-9-2|\log_2(\eps/4)| \geq j/2$.
Thus so we find for $t$ sufficiently large
\[
  \E\left[R^{\bs{Z}}_{[0,t]_{\bs{Z}}}\right]
  \leq K 2^{j+1} \left( \gamma \vep + e^{-\sqrt{j}/2} \right)
  \leq K 2^{j+1} \left( \gamma+1 \right) \vep.
\]
Now, from (\ref{jt}) we see that $2^j\le 2t$ and $j\ge \frac{1}{\log
  2}\log t$, so that
\[
  \E\left[R^{\bs{Z}}_{[0,t]_{\bs{Z}}}\right]
  \le
  4K(\gamma+1)\vep t
  \le
  C\frac{t}{\sqrt{\log\log t}},
\]
for some constant $C>0$, for all sufficiently large $t$.

%%%%%%%%%%%%%%%%%%%%%%%%%%%%%%%%%%%%%%%%%%%%%%%%%%%%%%%%%%%%%%%%%%
\section{Lower bound on the range}
\label{sec:LB}

The main idea is to show that the initial visits to a level (a horizontal line) do not take too long.
If little time is spent in each level, then the process has visited many levels, hence made many vertical steps.
The number of vertical steps is equal to the range, which is therefore large.

Thus we start by focusing on a single level $\Z\times\{y\}$.
The process enters the level at some sequence of locations $ (a_i,y)$, which we will simply refer to as $a_i$.
Each time it enters the level it performs a simple random walk within the level until it reaches a previously unvisited vertex, at which time it exits to level $y\pm 1$.
The location of the next entry to the level is determined by the exit point and random events that occur while the process is outside the level.
To avoid dealing with those dependencies, we identify events that hold with high probability uniformly in this ``external'' randomness.

For this section we use the following construction of the balanced excited random walk via stacks of instructions (one stack per vertex): the first instruction at each vertex being a vertical step, and all subsequent ones are horizontal steps.

Fix $y \in \Z$.  We denote by $\F_y$ the $\sigma$-algebra generated by the stacks of all vertices in level $y$. 
In particular, the durations and exit nodes of the first $n$ visits to a level are determined by the entry points $a_1,\dots,a_n$ together with $\F_y$.
Let $\F^v_{y}$ denote the $\sigma$-algebra generated by the vertical instructions at levels $y\pm 1$, and $\F'_y$ the smallest $\sigma$-algebra containing $\F_y$ and $\F^v_y$.
(Note that -- unlike the $\F_y$ -- these $\sigma$-algebras are not independent of each other.)

In a similar manner to Section \ref{sec:recurrence} we will consider families of random walks in level $y$.
In the first instance these walks will be measurable with respect to $\F_y$ as detailed next.

\subsubsection*{A collection of random walks in level $y$.}

Given a finite sequence $\bs{a}=(a_1,\dots,a_n)$ of points in $\Z$ and sequences of instructions $\bs{I}_y=(I((x,y),k)_{x \in \Z, k \in \N}$, the $\bs{a}$-family of walks is defined as follows:
We first start a walk at position $(a_1,y)$.
It uses the first instruction at $(a_1,y)$ (which is a vertical step) and then stops, using up exactly $\LL_1=1$ instructions.
Inductively, the $i$th walk starts at $(a_i,y)$, and uses the instructions following the ones used by previous walks, 
always using the first unused (by itself or any of the previous walks) instruction at its current location.
Once the walk visits a previously unvisited site, it uses the first instruction at that site (which is a vertical instruction) and stops.
The \textbf{lifetime} $\LL_i$ of the $i$-th walker is the number of instructions used by the walk, which is 1 more than the number of horizontal steps made by this walk.
The instruction stacks $\bs{I}$ will be i.i.d.~over sites (and balanced) so that all of the walks eventually stop almost surely, and the above process is well defined.

\begin{lemma}
  Let $\bs{a}=(a_1,\dots,a_n)$ be some sequence of starting points in level $y$, and $\bs{a}'=(a'_1,\dots,a'_n)$ be any permutation of $\bs{a}$.
  Then the total lifetime of the $\bs{a}$-family of walks is equal to the total lifetime of the $\bs{a}'$-family of walks,
  i.e.~$\sum_{i=1}^n \LL_i = \sum_{i=1}^n \LL'_i$.
\end{lemma}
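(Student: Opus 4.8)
The plan is to recognize this lemma as an instance of the Abelian property from Section~\ref{sec:Abelian}, specialized to walks confined to level $y$. The total lifetime $\sum_{i=1}^n \LL_i$ is nothing but the total number of instructions used (at level-$y$ vertices) by the $\bs{a}$-family of walks, and the lemma asserts this quantity is invariant under permuting the starting points. Since permuting the order of starting points in $\bs{a}$ is exactly a change of movement list for a fixed set of $n$ particles (the $i$th particle always starts at the multiset element $a_i$, regardless of the order in which we process them), the natural route is to invoke Proposition~\ref{prop:abelian1}.

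The key step is to phrase the $\bs{a}$-family construction in the language of particles, instructions, and movement lists. First I would take the particle set $\bs{P}=(P_1,\dots,P_n)$ with $P_i$ started at $(a_i,y)$, and I would take as the instruction set $\bs{I}$ the given stacks $\bs{I}_y$ at level-$y$ vertices, \emph{modified} so that the first (vertical) instruction at every site is appended with a cease statement. With this modified instruction set, a particle released from $(a_i,y)$ walks horizontally using successive instructions until it reaches a site whose stack has not yet been touched; there it reads the first instruction (vertical, now carrying a cease), takes that step and stops---which is precisely the rule defining the $\bs{a}$-family. The sequential construction of the $\bs{a}$-family corresponds to the movement list $L = (1,2,\dots,n,1,2,\dots,n,\dots)$ (cycling, so every particle appears infinitely often, though after finitely many steps all particles are ceased), and the $\bs{a}'$-family corresponds to the movement list $L'$ obtained by applying the same permutation. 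Proposition~\ref{prop:abelian1} then gives $\bs{I}(L)=\bs{I}(L')$, i.e.\ the \emph{set} of instructions used is the same.

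Finally I would convert equality of the set of used instructions into equality of total lifetime. Since each particle ceases exactly when it reads a site's vertical (cease) instruction, and no two particles ever cease at the same site (a ceased site has been visited, so it is never ``newly visited'' again), the total lifetime $\sum_i \LL_i$ equals the number of instructions in $\bs{I}(L)$ used at level-$y$ vertices: each horizontal instruction used contributes $1$ to exactly one walker's lifetime, and each vertical (cease) instruction used contributes the terminal $+1$ to exactly one walker. Hence $\sum_{i=1}^n \LL_i = \#\bs{I}(L) = \#\bs{I}(L') = \sum_{i=1}^n \LL'_i$. The one point requiring a little care---and the main obstacle---is justifying that all walks terminate almost surely (so that $L$ really is a valid movement list realizing the construction and the counts are finite); this follows from the i.i.d.\ balanced nature of the stacks exactly as in the no-infinite-descending-chains argument of Lemma~\ref{lem:chains}, or simply from the fact that a single walker performs a recurrent simple random walk within the level until it hits a fresh site, of which there is always one within finite range.
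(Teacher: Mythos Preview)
Your proof is correct and follows the same route as the paper's: invoke the Abelian property (Proposition~\ref{prop:abelian1}) to conclude that the set of instructions used is independent of the order, and then observe that the total lifetime equals the total number of instructions used. The paper's version is a two-line proof that leaves the translation into the particles/movement-list framework implicit, whereas you spell it out (including the cease-decoration of vertical instructions and the termination check); one minor slip is that the \emph{sequential} construction of the $\bs{a}$-family corresponds to a list of the form $1,1,\dots,2,2,\dots$ rather than the cycling list you wrote, but this is harmless since Proposition~\ref{prop:abelian1} makes the choice of movement list irrelevant.
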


\begin{proof}
  By the Abelian property (Proposition \ref{prop:abelian1}) the exact same set of instructions at each vertex will be used in the two scenarios.
  The total number of instructions used is equal to the total lifetime.
\end{proof}

For a sequence $\bs{a}$ of starting points, we henceforth denote by $\cL_{y,\bs{a}} = \cL_{\bs{a}}$ the number of instructions used by the walks starting at points in $\bs{a}$.
For now we shall omit the $y$ subscript, while we consider only a single level, but will use it later when working with multiple levels.
Thus for any sequence $\bs{a}$, we have that $\cL_{\bs{a}}$ is a random variable measurable in $\F_y$.
In light of the last lemma, the order of elements in $\bs{a}$ is immaterial, and we may think of $\bs{a}$ as a multi-set rather than a sequence.
Moreover, we henceforth assume that the $a_i$ are non-decreasing, and will study the durations of the visits under that assumption.
For the $k$'th walk started at $a_k$, let $[u_k+1,v_k-1]$ be the maximal contiguous interval - of vertices visited by the first $k-1$ walks - that contains $a_k$.  If $a_k$ has not previously been visited, we write $u_k=v_k=a_k$, so that this is an empty set.
(In that case we have $\LL_k=1$.)

\begin{remark}
  If $a_i\equiv a$ for all $i$ (a constant sequence), then the process in the level is equivalent to one-dimensional IDLA (see e.g.~\cite{IDLA1} where this has been studied also in higher dimensions), and one has $\sum_{k\le n} \LL_k \asymp n^3$. 
  However, in the BERW model, each vertex $(x,y)\ne (0,0)$ is used as an entry point to the level at most twice, depending on the unique vertical steps from $(x,y\pm1)$.
  Our proof exploits this fact.
\end{remark}

\subsubsection*{Feasible sets and the surplus}
For $(x,y)\ne (0,0)$ we denote by $U^y_x \in\{0,1,2\}$ the number of vertical instructions pointing at $(x,y)$.
The starting point $(0,0)$ is slightly different:
We define $U_{0}^0$ to be 1 plus the number of vertical instructions pointing at $(0,0)$.
For fixed $y$, the $(U^y_x)_{x\in\Z}$ are independent random variables that are  $\sim \Bin(2,1/2)$ for $(x,y)\ne (0,0)$ (and $\sim 1+\Bin(2,1/2)$ for $(0,0)$), measurable w.r.t.~$\F^v_{y}$, and independent of $\F_y$.

We say that a sequence $\bs{a}$ of starting points is \textbf{feasible} (for level $y$) if no $x$ appears in $\bs{a}$ more than  $U^y_x$ times.  
For an interval $I\subset\Z$ we define its \textbf{surplus} $S^y_I$ by
\[
  S^y_I := \sum_{x\in I} (U^y_x-1).
\]
Note that the surplus of an interval of length (number of vertices) $m$ has distribution $\Bin(2m,1/2)-m$.
We denote by $M^y_n$ the maximal surplus of an interval of length at most $n$ in $[-n^2,n^2]$ in level $y$:
\[
  M^y_{n} := \max \{S^y_{[a,b)} : -n^2\le a\le b < n^2, b-a\leq n\}.
\]
We shall consider the event
\[
  E=E_{n,y} := \big\{M_{n}^{y} \leq \sqrt{6n\log n}\big\}.
\]

\begin{lemma}
  \label{lem:badexcess}
  For each $n\ge 10$ and $y\in \Z$, $\P(E_{n,y}^c) \leq 3 n^{-3}$.
\end{lemma}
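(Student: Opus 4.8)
The plan is to use a union bound over all $O(n^3)$ intervals appearing in the definition of $M_n^y$, combined with an exponential tail estimate for the surplus of a single interval. Write $t := \sqrt{6n\log n}$. An interval $[a,b)$ of length $0$ has surplus $0 < t$, so it suffices to control the intervals $[a,b)$ with $-n^2\le a\le b<n^2$ and length $\ell := b-a\in\{1,\dots,n\}$: there are at most $2n^2$ choices for $a$ and $n$ choices for $\ell$ (with $b=a+\ell$ then determined), hence at most $2n^3$ such intervals.

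Fix one such interval $J=[a,b)$ of length $\ell\le n$, and first suppose it is not the case that $y=0$ and $0\in J$. Then $S_J^y=\sum_{x\in J}(U_x^y-1)$ has the law of $\Bin(2\ell,1/2)-\ell$, since each $U_x^y\sim\Bin(2,1/2)$ is itself a sum of two independent fair coin flips. Applying the Chernoff--Hoeffding bound to these $2\ell$ independent Bernoulli$(1/2)$ variables gives
\[
  \P(S_J^y\ge t)\le\exp\!\big(-t^2/\ell\big)\le\exp\!\big(-t^2/n\big)=\exp(-6\log n)=n^{-6}.
\]
It is important to decompose $U_x^y$ into two Bernoullis here, so that Hoeffding is applied to $2\ell$ variables of range $1$ rather than to $\ell$ variables of range $2$; the latter would only yield $n^{-3}$, which would not survive the union bound. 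Summing over the at most $2n^3$ such intervals contributes at most $2n^{-3}$ to $\P(E_{n,y}^c)$.

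It remains to account for the distinguished column. When $y=0$ and $0\in J$, one has $S_J^0=1+\big(\Bin(2\ell,1/2)-\ell\big)$ because $U_0^0$ carries an extra $+1$, so $\P(S_J^0\ge t)\le\exp\!\big(-(t-1)^2/\ell\big)$. For each length $\ell$ there are exactly $\ell$ intervals of that length containing $0$, so these intervals contribute at most $\sum_{\ell=1}^{n}\ell\exp\!\big(-(t-1)^2/n\big)\le \tfrac{n(n+1)}{2}\exp\!\big(-(t-1)^2/n\big)$. Since $(t-1)^2/n=6\log n-2\sqrt{6\log n/n}+1/n$, this equals $O(n^{-4})$. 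Hence $\P(E_{n,y}^c)\le 2n^{-3}+O(n^{-4})$, which is at most $3n^{-3}$ for every $n\ge 10$; one checks the numerical constants directly at $n=10$ (there the non-origin part is at most $2n^{-3}$ and the origin part is of order $5\times10^{-4}$), and the estimate only improves as $n$ grows.

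Since the argument is an elementary union bound, there is no serious obstacle. The only points that need care are (i) choosing a per-interval tail strong enough to beat the cubic number of intervals — this is exactly why the constant $6$ appears in the definition of $E_{n,y}$, producing the bound $n^{-6}$ per interval — and (ii) the lower-order correction coming from the column through the origin.
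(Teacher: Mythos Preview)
Your proof is correct and follows essentially the same route as the paper: a union bound over the at most $2n^3$ relevant intervals combined with the Chernoff/Hoeffding tail $\P(S^y_I\ge t)\le e^{-t^2/|I|}\le e^{-t^2/n}=n^{-6}$, plus a separate treatment of the intervals containing the origin when $y=0$. The paper phrases the exponential bound via $\pm1$ variables (writing $S^y_I=\tfrac12\sum_{i\le 2m}X_i$) rather than Bernoullis, and handles the origin case with the cleaner inequality $\sqrt{6n\log n}-1\ge\sqrt{5n\log n}$ for $n\ge10$ (yielding a contribution of at most $n^2\cdot n^{-5}=n^{-3}$), but these are cosmetic differences only.
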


\begin{proof}
  For an interval $I$ of length $m$, and $S^y_I \sim \Bin(2m,1/2)-m$ (unless $y=0$ and $0\in I$).  The latter can be written as $\frac12 \sum_{i\le 2m} X_i$,
  where the $X_i$ are i.i.d. $\pm1$ random variables with mean 0.
  By the standard Chernoff bound for this sum, we have 
  \[
    \P(S^y_I \geq \ell) = \P\Big(\sum_{i\le 2m} X_i \geq 2\ell\Big)
    \leq e^{-\ell^2/m}.
  \]
  For an interval $I$ of length \emph{at most} $m\le n$ we thus have
  \[
    \P(S^y_I \geq \ell) \leq e^{-{\ell^2/|I|}} \leq e^{-\ell^2/n}.
  \]
  There are at most $2n^3$ intervals $I$ to consider in the definition of $M^y_n$, and the claim follows by a union bound:
  \[
    \P(E^c) \leq 2 n^3 e^{-(6n\log n)/n} = 2n^{-3}.
  \]
  
  The case where $y=0$ is minutely different since intervals containing $0$ have $S^y_I\sim 1+Bin(2m,1/2)-m$.
  There are less than $n^2$ such intervals, and for $\ell\ge 1$  each has
  \[
    \P(S_I^0 \ge \ell) \leq e^{-(\ell-1)^2/n}.
  \]
  For $n\ge 10$ we have $\sqrt{6n\log n}-1\geq \sqrt{5n\log n}$, so the contribution from these intervals to $\P(E^c)$ is at most $n^{-3}$.
\end{proof}

The main significance of the maximal surplus comes from the following obervation.

\begin{lemma}\label{L:bdry_near}
  Let $\bs{a}$ be a feasible \emph{increasing} sequence of starting points in $[-n^2,n^2]$.
  Then for each $k=1,\dots n$, when the $k$-th walker starts (at $a_k$), the distance to the nearest unvisited point is at most $M^y_n$.
\end{lemma}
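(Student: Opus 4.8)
The plan is to establish, deterministically given the surplus data $(U^y_x)_x$ (that is, for every realisation of the instruction stacks in level $y$), that when the $k$-th walker starts the distance in question is at most the surplus of a short interval lying just to the left of $a_k$, and then to invoke the definition of $M^y_n$. Write $V$ for the set of sites of level $y$ visited by the first $k-1$ walkers; if $a_k\notin V$ the distance is $0$ and we are done, so assume $a_k\in V$ and (with the earlier notation) let $[\lambda,\rho]$ be the maximal visited interval containing it, so that $u_k=\lambda-1$ and $v_k=\rho+1$ are unvisited.

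First I would prove the structural fact that, because $\bs a$ is increasing, no site of $[a_k,\rho]$ is unvisited: if $x>a_k$ lies in $V$ then so does every site of $[a_k,x]$. Indeed, $x$ was created by some walker $i<k$; since $a_i\le a_k<x$, this walker did not start at $x$, so it reached $x$ by walking through already-visited sites, stopping at $x$ because it was the first previously-unvisited site it met. The range of this walk is an interval containing both $a_i$ and $x$, hence it sweeps all of $[a_i,x]$ before reaching $x$, so every site of $[a_i,x-1]\supseteq[a_k,x-1]$ was already visited. Applying this with $x=\rho=\max V$ shows that $a_k$'s interval reaches the rightmost visited site, and that the distance from $a_k$ to an unvisited site on its right is $v_k-a_k=\rho+1-a_k$.

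Next I would count, via feasibility, the walkers responsible for building $[\lambda,\rho]$. Let $W$ be the set of walkers among the first $k-1$ that create one of the $\rho-\lambda+1$ sites of $[\lambda,\rho]$; since each walker creates exactly one site, $|W|=\rho-\lambda+1$. Every $i\in W$ has $a_i\le a_k-1$ (as $\bs a$ is increasing and walker $k$ has not yet moved), and also $a_i\ge\lambda$: a walker starting at $a_i\le\lambda-1$ and reaching a site of $[\lambda,\rho]$ would have to visit $\lambda-1$, which is unvisited at all times $\le k$, forcing it to stop there or earlier — impossible. Thus all of $W$ starts in $[\lambda,a_k-1]$, and feasibility bounds the number of starting points in that interval by $\sum_{x=\lambda}^{a_k-1}U^y_x=(a_k-\lambda)+S^y_{[\lambda,a_k-1]}$. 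Hence $\rho-\lambda+1\le(a_k-\lambda)+S^y_{[\lambda,a_k-1]}$, so the distance from $a_k$ to the nearest unvisited site is at most $v_k-a_k=\rho+1-a_k\le S^y_{[\lambda,a_k-1]}$. Finally, $[\lambda,a_k-1]$ contains at most $\rho-\lambda\le k-2\le n-2<n$ sites and (since $a_k\in[-n^2,n^2]$) lies inside $[-n^2,n^2]$ up to an overshoot of at most $n$ at its left end; absorbing this $O(n)$ boundary effect into the window range defining $M^y_n$ (which changes the estimate in \cref{lem:badexcess} only by an irrelevant constant) yields $S^y_{[\lambda,a_k-1]}\le M^y_n$, as required.

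I expect the structural step to be the main obstacle: one must argue carefully about what it means for a site to be ``created'' by a walker, using that a nearest-neighbour walk on $\Z$ visits every integer between its extremes and that each walk stops precisely at the first previously-unvisited site it reaches. The boundary bookkeeping near $\pm n^2$ is a minor nuisance rather than a genuine difficulty, and the only place randomness enters is through the (independent, $\F^v_y$-measurable) surplus variables, over which \cref{lem:badexcess} already provides control.
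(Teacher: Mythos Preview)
Your argument is essentially the paper's: identify the visited interval $[\lambda,\rho]$ containing $a_k$, show via the nearest-neighbour structure that it is built precisely by the walkers starting at or to the right of $\lambda$, and bound $\rho+1-a_k$ by the surplus of a short interval using feasibility. Two small repairs are needed: since $\bs a$ is only non-decreasing you may have $a_i=a_k$ for some $i<k$, so instead of $[\lambda,a_k-1]$ use $J=[\min_{i\in W}a_i,\,a_k]$ (this is the paper's $[a_{i_k},a_k]$) and include walker $k$ in the feasibility count to get $|W|+1\le |J|+S^y_J$, hence $\rho+1-a_k\le S^y_J$; this also dissolves your boundary nuisance, because both endpoints of $J$ are starting points and therefore already lie in $[-n^2,n^2]$, giving $S^y_J\le M^y_n$ directly with no need to enlarge the window.
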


\begin{proof}
  Let $\beta_k$ be the rightmost point visited by the first $k-1$ walks, and $i_k$ be the minimal $i$ such that all vertices in $[a_i,a_k]$ have been visited by the first $k-1$ walks.
  If $a_k$ has not been visited by the first $k-1$ walks then $\beta_k<a_k$ and we set $i_k=k$ (in that case the distance in question is 0 and the claim holds trivially).

  Since $\bs{a}$ is increasing, we know that the first $k-1$ walkers have had $k-i_k$ starting points in the interval $I=[a_{i_k},a_k]$, and the length of the contiguous interval of visited points containing $a_k$ is $k-i_k$.
  Now at most $k-i_k-|I|\le S^y_I-1$ of these walkers can exit to the right of $a_k$, so $\beta_k\le a_k+S_I^y-1$.
  It follows that $\beta_k+1-a_k \leq S^y_I \leq M^y_n$.
\end{proof}

For fixed $r,m\in \N$, let $X$ be a random variable with law given by the exit time from $(0,r)$ of a simple random walk started at $m$, and let $\mathbb{L}_{r,m}=\sum_{i=1}^r X_i$, where $X_i$ are i.i.d.~with law $X$.
We are approaching a central step in the argument for which we will make use of the following Lemmas (which are proved later).

\begin{lemma}\label{L:total_local}
  There exist $c,C>0$ such that for all $r,m\in \N$,
  \[\P(\mathbb{L}_{r,m}>8r^2m) \le C r e^{-cm}.\]
\end{lemma}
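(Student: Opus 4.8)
The plan is to recast the claim as a Chernoff bound for the sum $\mathbb{L}_{r,m}=\sum_{i=1}^r X_i$ of $r$ i.i.d.\ exit times, using that each $X_i$ has mean of order $mr$ and a tail that is sensitive to the starting point. Two reductions come first. If $m\ge r$ then $X\equiv 0$ and there is nothing to prove. Since the exit time from $(0,r)$ started at $m$ has the same law as the one started at $r-m$, it suffices to carry out the estimate for $1\le m\le r/2$: for $r/2<m\le r-1$ we will simply run the same computation with the parameter $r-m$ in place of $m$ but keeping the \emph{original} Chernoff target $8mr^2\ge 8(r-m)r^2$. By the classical gambler's-ruin formula, $\E[X]=m(r-m)\le mr$.

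The key input is a tail bound for $X$ with the correct dependence on $m$. First, $X\le H_0$, the hitting time of $0$, so the standard reflection estimate gives $\P(X>t)\le\P(H_0>t)\le C_0\,m/\sqrt t$ for all $t\ge 1$. Second, by Donsker's invariance principle there is a universal $\delta>0$ with $\sup_{0<x<r}\P_x(X>r^2)\le 1-\delta$ for all $r\ge 2$ (from any interior site, within $r^2$ steps the walk reaches distance $r$, hence exits, with probability bounded below); by the Markov property this gives $\sup_{0<x<r}\P_x(X>s)\le(1-\delta)^{-1}e^{-c_1 s/r^2}$ with $c_1=\log\frac1{1-\delta}$. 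Combining these via the strong Markov property at time $r^2$ yields, for $t>r^2$, the refined bound $\P(X>t)\le C_1\,\tfrac mr\,e^{-c_1 t/r^2}$. Integrating $\E[X^k]=\int_0^\infty kt^{k-1}\P(X>t)\,dt$ against these estimates (splitting the range at $t=r^2$, and at $t=m^2$ within $[0,r^2]$) produces moment bounds of the form
\[
  \E[X^k]\le C_2^k\,k!\,m\,r^{2k-1}\qquad\text{for all }k\ge 1,
\]
for a universal constant $C_2$, which we may freely enlarge to exceed $7/2$. The single factor $m$ on the right is the crucial feature: it encodes that $X$ is typically of order $m$, with only an $\asymp m/r$ chance of a long ($\asymp r^2$) excursion to the far boundary.

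Given these moments the rest is routine. For $0<\lambda<(2C_2 r^2)^{-1}$,
\[
  \E[e^{\lambda X}]\le 1+\lambda\E[X]+\frac mr\cdot\frac{(\lambda C_2 r^2)^2}{1-\lambda C_2 r^2}\le\exp\big(\lambda m r+2C_2^2\lambda^2 m r^3\big),
\]
so $\E[e^{\lambda\mathbb{L}_{r,m}}]\le\exp(\lambda m r^2+2C_2^2\lambda^2 m r^4)$, and Markov's inequality applied to $e^{\lambda\mathbb{L}_{r,m}}$ gives
\[
  \P(\mathbb{L}_{r,m}>8mr^2)\le\exp\big(-7\lambda m r^2+2C_2^2\lambda^2 m r^4\big).
\]
Choosing $\lambda=7/(4C_2^2 r^2)$ (admissible since $C_2>7/2$) makes the exponent equal to $-\tfrac{49}{8C_2^2}m$, so $\P(\mathbb{L}_{r,m}>8mr^2)\le e^{-cm}$ with $c=\tfrac{49}{8C_2^2}$, which is of the required form (even with $C=1$). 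For $r/2<m\le r-1$ the same computation with parameter $r-m$, target $8mr^2$, and the inequality $r-m<m$, reproduces the exponent $-7\lambda m r^2+2C_2^2\lambda^2 m r^4$, hence the identical conclusion.

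The step I expect to be the real obstacle is \emph{getting the variance scaling right}. The crude exponential tail $\P(X>t)\lesssim e^{-c_1 t/r^2}$ on its own yields a ``variance proxy'' of order $r^4$ per summand; summing $r$ of them and optimizing $\lambda$ then gives only something like $\P(\mathbb{L}_{r,m}>8mr^2)\le e^{-c m^2/r}$, which is strictly weaker than $Cre^{-cm}$ (it is already useless when $m\asymp\sqrt r$). It is the extra factor $m/r$ in the refined tail --- morally ``a walk starting within distance $m\ll r$ of a boundary point is unlikely to survive $r^2$ steps'' --- that brings the variance proxy down to order $mr^3$ and lets the Chernoff bound close at scale $e^{-cm}$. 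Establishing this refined tail uniformly in $r$, together with the uniform spectral gap $\delta$ (including the small-$r$ cases), is the only place where genuine care is needed; the moment integrals and the Chernoff optimization are bookkeeping.
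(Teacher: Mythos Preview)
Your argument is correct, and it proceeds by a genuinely different route from the paper. The paper never computes moments or an MGF of $X$; instead it writes $\mathbb{L}_{r,m}=\sum_{s\in(0,r)} L_s$ as a sum of \emph{cumulative local times} over sites. For each site $s\ge m$ the number $N_s$ of the $r$ walks that ever reach $s$ is $\Bin(r,m/s)$, which the paper bounds above $2rm/s$ via a Binomial large-deviation lemma; conditionally on $N_s\le 2rm/s$ the total local time at $s$ is dominated by $\Neg(\lfloor 2rm/s\rfloor,1/2s)$, and the same lemma bounds this above $8rm$. A union bound over the $r$ sites produces the factor $r$ in $Cre^{-cm}$. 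Your approach is more analytic: you extract the key probabilistic fact (that the chance a walk started at distance $m$ from the boundary survives time $r^2$ is $O(m/r)$) directly into a refined tail $\P(X>t)\lesssim (m/r)e^{-c_1 t/r^2}$, convert this to moment bounds $\E[X^k]\le C_2^k k!\,m\,r^{2k-1}$, and close with a standard Bernstein-type Chernoff bound. Your symmetry reduction for $m>r/2$ is also fine once one notes that the moment bound with parameter $r-m<m$ is only strengthened by replacing $r-m$ by $m$. What you gain is a cleaner conclusion (your bound has no prefactor $r$), and the argument packages nicely as a sub-exponential large-deviation estimate. What the paper's decomposition buys is that it avoids invoking Donsker or the reflection principle and relies only on an elementary Binomial/Negative-Binomial tail lemma, which it then reuses verbatim for the companion Lemma~\ref{L:total_local2}.
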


Similarly, let $\mathbb{L}'_r$ be the total exit time from $(0,r)$ of $r$ random walks with starting points $\bs{a}_r$ in $(0,r)$ under a probability measure $\P_{\bs{a_r}}$.

\begin{lemma}\label{L:total_local2}
  There exist $c,C>0$ such that for any $1\le r\le n$ and for any given set of $r$ starting points $\bs{a}$ in $(0,r)$
  we have $\P_{\bs{a}_r}(\mathbb{L}'_{r}>4nr^2) \le C r e^{-cn}$.
\end{lemma}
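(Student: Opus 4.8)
The plan is to combine a uniform sub-exponential tail estimate for a single exit time with a Chernoff bound for the sum of the $r$ independent exit times that make up $\mathbb{L}'_r$. Write $\mathbb{L}'_r=\sum_{j=1}^r X_j$, where $X_j$ denotes the exit time from $(0,r)$ of the $j$-th walk, started at $a_j\in(0,r)$, and the $X_j$ are independent. The key input is the bound
\[
  \sup\{\P_m(X>kr^2)\;:\;r\in\N,\ m\in(0,r)\}\;\le\;\rho^{\,k}
  \qquad\text{for all }k\in\N,
\]
where $X$ denotes the exit time from $(0,r)$ of a simple random walk started at $m$, and $\rho\in(0,1)$ is a fixed constant. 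This follows from the Markov property as soon as $p_0:=\inf\{\P_m(X\le r^2):r\in\N,\ m\in(0,r)\}$ is strictly positive: conditioned on $\{X>\ell r^2\}$ the walk occupies some point of $(0,r)$ at time $\ell r^2$ and then exits within the next $r^2$ steps with probability at least $p_0$, so $\P_m(X>kr^2)\le(1-p_0)^k$ and one may take $\rho=1-p_0$. The positivity of $p_0$ is the only place where an estimate is needed: for $r$ large it follows from Donsker's invariance principle exactly as in the proof of \cref{L:slow_pr}, since from $m$ the walk needs to travel only distance $\min(m,r-m)\le r/2$ before exiting and $\P_m(S_{r^2}\notin(0,r))$ is bounded away from $0$; for the finitely many small values of $r$ one checks directly that the exit probability within $r^2$ steps is strictly positive.

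From this tail bound, for every $k\in\N$ the normalised moments $\mu_k:=\sup_{r,m}\E_m[(X/r^2)^k]$ are finite and bounded by a constant depending only on $\rho$ and $k$; hence there is $\lambda_0>0$ with $C_0(\lambda):=\sup_{r,m}\E_m[e^{\lambda X/r^2}]<\infty$ for $0<\lambda<\lambda_0$. Using $\E_m[X]=m(r-m)\le r^2/4$ and the uniform moment bounds, a Taylor expansion yields $C_0(\lambda)\le 1+\tfrac14\lambda+C'\lambda^2\le e^{\lambda/4+C'\lambda^2}$ for a uniform constant $C'$ and all small $\lambda$. We therefore fix $\lambda\in(0,\lambda_0)$ small enough that $\log C_0(\lambda)\le\lambda/2$.

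Finally, apply Markov's inequality to $e^{\lambda\mathbb{L}'_r/r^2}$. By independence of the $X_j$,
\[
  \E\big[e^{\lambda\mathbb{L}'_r/r^2}\big]=\prod_{j=1}^r\E\big[e^{\lambda X_j/r^2}\big]\le C_0(\lambda)^r,
\]
so that, since $1\le r\le n$ and $\log C_0(\lambda)\ge0$,
\[
  \P_{\bs{a}_r}\big(\mathbb{L}'_r>4nr^2\big)\le e^{-4\lambda n}\,C_0(\lambda)^r
  =e^{-4\lambda n+r\log C_0(\lambda)}\le e^{-(4\lambda-\log C_0(\lambda))n}\le e^{-(7\lambda/2)n}.
\]
Taking $c=7\lambda/2$ and $C=1$ (so that $Cre^{-cn}\ge e^{-cn}$ because $r\ge1$) gives the stated bound. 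Note that it is crucial here that the Chernoff threshold $4nr^2$ is of order $n$ times the scale $r^2$, which is what produces decay in $n$ rather than merely in $r$ (the mean of $\mathbb{L}'_r$ being only of order $r^3\le nr^2/4$).

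The main obstacle is the uniform positivity of $p_0$, i.e.~a lower bound on the probability of exiting $(0,r)$ within $r^2$ steps that does not degrade as $r\to\infty$ nor as the starting point approaches the centre of the interval; this is exactly the Donsker-type estimate invoked above. The remaining ingredients — the uniform sub-exponential tail, the finiteness and uniform control of the exponential moments, and the Chernoff bound — are routine.
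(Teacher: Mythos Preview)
Your proof is correct, but takes a different route from the paper's. The paper decomposes $\mathbb{L}'_r$ \emph{spatially}, as the sum over $s\in(0,r)$ of the total local time $L_s$ at $s$ accumulated by all $r$ walks. Since from any point of $(0,r)$ a walk exits before returning to $s$ with probability at least $2/r$, each walk's contribution to $L_s$ is stochastically dominated by a $\Geom(2/r)$ variable; thus $L_s\preceq\Neg(r,2/r)\preceq\Neg(n,2/r)$, and \cref{L:basic_ld} gives $\P(L_s>4nr)\le Ce^{-cn}$. A union bound over the $r-1$ sites $s$ then yields the stated $Cre^{-cn}$.

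You instead decompose $\mathbb{L}'_r$ \emph{by walk}, writing it as $\sum_{j=1}^r X_j$ and running a Chernoff argument on the independent exit times $X_j$. The key input is the uniform sub-exponential tail of $X/r^2$, which you obtain from a Donsker-type lower bound on the one-block exit probability (in the spirit of \cref{L:slow_pr}); combined with the exact mean $\E_m[X]=m(r-m)\le r^2/4$ and a Taylor estimate for $\E[e^{\lambda X/r^2}]$, you get $\log C_0(\lambda)\le\lambda/2$ for small $\lambda$, and the Chernoff bound then gives $\P(\mathbb{L}'_r>4nr^2)\le e^{-7\lambda n/2}$ outright, without the factor $r$. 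So your argument is slightly sharper and avoids both the local-time decomposition and \cref{L:basic_ld}; the paper's approach, on the other hand, parallels the proof of \cref{L:total_local} and reuses the Negative Binomial estimate already established there.
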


Continue to fix the vertical level $y$.
For an interval $I$, let $\bs{A}_{I}=\bs{A}^y_{I}$ denote the (feasible and $\F^v_y$-measurable) multi-set containing each $x\in I$ exactly $U^y_x$ times, and nothing else.
Let $D=D_{n,y}$ be the event that there is some interval $I=[a,b)\subset[-n^2,n^2)$ with $|I|\leq n$ such that $\cL_{\bs{A}_I} \geq 2^7|I|n^{3/2} \sqrt{\log n}$.
Note that $D\in \F'_y$.

\begin{lemma}\label{L:interval_bd}
  For all $n$ sufficiently large we have $\P(D) \leq 4n^{-3}$.
\end{lemma}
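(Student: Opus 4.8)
The plan is to work on the high-probability event $E=E_{n,y}$ from Lemma~\ref{lem:badexcess} and to bound $\P(D\cap E)$ by a union bound over the at most $2n^3$ intervals $I=[a,b)\subset[-n^2,n^2)$ with $|I|\le n$, showing that each contributes only a super-polynomially small probability, so that $\P(D\cap E)\le n^{-3}$; together with $\P(E^c)\le 3n^{-3}$ this gives $\P(D)\le 4n^{-3}$.

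Fix such an interval $I$ and condition on $\F^v_y$. This determines $\bs A_I=\bs A^y_I$ and the occurrence of $E$, while the $\bs A_I$-family of walks is governed by $\F_y$, which is independent of $\F^v_y$. Writing $\LL_k$ for the lifetime of the $k$-th walker, we have $\cL_{\bs A_I}=|\bs A_I|+\sum_k\xi_k$, where $\xi_k$ is the exit time of a simple random walk from the maximal already-visited contiguous block $(u_k,v_k)$, started at $a_k$. Each walker adds exactly one vertex to the visited set, so that set stays inside an interval of length at most $2|I|+S^y_I+1$; moreover, by Lemma~\ref{L:bdry_near} (whose proof applies to all $k\le|\bs A_I|$, since the relevant sub-intervals of $I$ have length at most $n$) when the $k$-th walker starts it lies within distance $\max\{S^y_J: J \text{ a sub-interval of } I\}$ of an endpoint of $(u_k,v_k)$. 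On $E$ both $S^y_I$ and that maximum are at most $\min\{|I|,M^y_n\}\le\min\{|I|,\sqrt{6n\log n}\}$. Using stochastic monotonicity of exit times in the interval length and in the distance from the starting point to the nearer endpoint, one couples the walkers one at a time (each using fresh, independent $\F_y$-randomness) so that $\sum_k\xi_k$ is stochastically dominated, conditionally on $\F^v_y$ and on $E$, by one of the i.i.d.\ sums $\mathbb{L}_{r,m}$ or $\mathbb{L}'_r$ of Lemmas~\ref{L:total_local}--\ref{L:total_local2}, with $r$ of order $|I|$ and $m$ of order $\min\{|I|,\sqrt{n\log n}\}$.

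It then remains to read off the tail bound. For the very shortest intervals (where $m$ would be too small for the rate $e^{-cm}$ in Lemma~\ref{L:total_local} to survive the $n^3$ intervals in the union bound) one uses Lemma~\ref{L:total_local2}, whose threshold $4nr^2$, of order $n|I|^2$, sits comfortably below $2^7|I|n^{3/2}\sqrt{\log n}$, and whose rate $e^{-cn}$ then makes the union bound trivial. For all larger intervals one uses Lemma~\ref{L:total_local}: its threshold $8r^2m$ is of order $|I|^2\min\{|I|,\sqrt{n\log n}\}$, which one checks stays below $2^7|I|n^{3/2}\sqrt{\log n}$ for every $|I|\le n$ — this is the one place the constant $2^7$ and the exponent $3/2$ in the definition of $D$ are genuinely used — while its rate $e^{-c\min\{|I|,\sqrt{n\log n}\}}$ is, for these intervals, small enough to survive the union bound. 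Summing the per-interval estimates over the $\le 2n^3$ intervals gives $\P(D\cap E)\le n^{-3}$ once $n$ is large, and the claim follows. The main obstacle is precisely this constant-chasing: the ``typical size'' $(2|I|+\sqrt{n\log n})^2\min\{|I|,\sqrt{n\log n}\}$ of the dominating i.i.d.\ sum must be kept below $2^7|I|n^{3/2}\sqrt{\log n}$ uniformly in $1\le|I|\le n$, which forces one to be slightly careful with the confining-interval length (using $S^y_I\le\sqrt{6n\log n}\ll|I|$ on $E$ when $|I|$ is close to $n$) and to split into the two regimes above.
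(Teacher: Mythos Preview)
Your proposal is correct and follows essentially the same route as the paper: split $\P(D)\le\P(E^c)+\P(E\cap D)$, take a union bound over the at most $2n^3$ intervals, use \cref{L:bdry_near} (with $\bs{A}_I$ in increasing order) to bound the distance to an unvisited vertex by $m=\lceil\sqrt{6n\log n}\rceil$ on $E$, then stochastically dominate the lifetimes by i.i.d.\ exit times and apply \cref{L:total_local} when $r=|\bs{A}_I|\ge 2m$ and \cref{L:total_local2} when $r<2m$. One minor simplification the paper makes that you might adopt: rather than tracking a confining interval of length $2|I|+S^y_I+1$, it is cleaner to note that after $k-1$ walks exactly $k-1$ vertices have been visited, so the contiguous visited block containing $a_k$ has length at most $k-1<r$; this gives directly that each $\xi_k$ is dominated by an exit time from $(0,r)$ and avoids the extra constant-chasing you flag at the end.
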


\begin{proof}
  We have
  \[ \P(D) \leq \P(E_{n,y}^c) + \P(E_{n,y}\cap D). \]
  By a union bound and the fact that $\P(E_{n,y}^c) \leq 3n^{-3}$  it suffices to prove that 
  \[\sum_{I}\P\big(E_{n,y},\cL_{\bs{A}_I} \geq 2^7|I|n^{3/2} \sqrt{\log n}\big) \leq n^{-3},
  \]
  where the sum over $I$ is over intervals of length at most $n$ contained in $[-n^2,n^2)$.
  There are at most $2n^3$ such intervals, so it suffices to prove that for each relevant interval $I$,
  \[
    \P(E_{n,y},\cL_{\bs{A}_I} \geq 2^7|I|n^{3/2} \sqrt{\log n}\big) \leq n^{-6}/2.
  \]

  Fix such an interval $I=[a,b)$, and consider the multi-set of entries $\bs{A}_I$, in increasing order.
  Denote its size by $r=|\bs{A}_I| \leq 2|I|$.
  On the event $E_{n,y}$ we have that $S^y_I\leq m := \lceil\sqrt{6n \log n}\rceil$.
  Consider now the $\bs{A}_I$ family of walks.
  By \cref{L:bdry_near} and its proof, when the $k$-th walk starts at $A_k=A_{I,k}$, the previously visited interval containing $A_{k}$ has length at most $k-1<r$, and the distance from $A_k$ to an unvisited vertex is at most $m$.
  If $r\ge 2m$, then
  conditioned on everything that happened during all previous visits to the level, the lifetime $\LL_k$ of the $k$-th walk is stochastically dominated by $X$ (the exit time from $(0,r)$, starting at $m$).
  Since this domination holds uniformly conditional on all previous visits, it also follows that we have the stochastic domination $\sum_{k} \LL_k \prec \sum_{k} X_k$, where there is one term in the sum for each element of $\bs{A}_I$.

  By \cref{L:total_local} with these $r$ and $m$, we find that
  \[ \P(E_{n,y},\cL_{\bs{A}_I} \geq 8r^2m) \leq Cre^{-cm}. \]
  Combining the above with $r\leq 2|I|\leq 2n$ and $m=\lceil\sqrt{6n\log n}\rceil \leq 4\sqrt{n\log n}$, we get (since $8r^2\leq 2^5|I|n$)
  \[ \P\big(E_{n,y},\cL_{\bs{A}_I} \geq 2^7|I|n^{3/2}\sqrt{\log n}\big) \leq 2Cne^{-cm}. \]
  This is indeed smaller than $n^{-6}/2$ for large enough $n$, as needed.

  If $I$ is such that $r < 2m$, then we use \cref{L:total_local2} instead.
  In this case, the total duration of the walks is bounded by the total time for $r$ walks to exit an interval of length $r$.
  Thus for an interval $I$ with $r < 2m$ we have
  \[
    \P(\cL_{\bs{A}_I} \geq 4nr^2) \leq Cre^{-cn} \leq n^{-6}/2
  \]
  for large enough $n$.
  Now, using $r\leq 2m$ and $r\leq2|I|$ we get $4nr^2 \leq 2^4|I|nm \leq 2^7|I|n^{3/2}\sqrt{\log n}$, so the claim follows.
\end{proof}

Towards the proof of \cref{L:total_local} we shall use the following large deviation result for certain random variables.

\begin{lemma}\label{L:basic_ld}
  There exist absolute constants $c,C$ such that for any $n\in \N,p>0$, the Binomial and Negative-Binomial random variables satisfy, respectively,
  \[ \P(\Bin(n,p) > 2np) \leq C e^{-cnp}, \]
  and 
  \[ \P(\Neg(n,p) > 2np^{-1}) \leq C e^{-cn}. \]   
\end{lemma}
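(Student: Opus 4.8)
The plan is to derive both tail bounds from the classical exponential-moment (Chernoff) method, and to handle the negative binomial by reducing it to the binomial via the standard trials-versus-successes duality, so that the constants come out uniform in $p$ and $n$.

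For the first inequality, write $\Bin(n,p)=\sum_{i=1}^n\xi_i$ with $\xi_i$ i.i.d.\ Bernoulli$(p)$. For any $\theta>0$, Markov's inequality applied to $e^{\theta\Bin(n,p)}$ together with $1+x\le e^{x}$ gives
\[
\P\bigl(\Bin(n,p)>2np\bigr)\le e^{-2\theta np}\bigl(1+p(e^{\theta}-1)\bigr)^{n}\le \exp\!\bigl(np\,(e^{\theta}-1-2\theta)\bigr).
\]
Taking $\theta=\log 2$ makes the exponent equal to $-np(2\log 2-1)$, so the first claim holds with $C=1$ and $c=2\log2-1>0$, uniformly over all $n\in\N$ and $p\in(0,1]$, with no edge cases (when $2np\ge n$ the probability is simply $0$).

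For the second inequality I would use the elementary identity that, for an integer $k$, $\Neg(n,p)>k$ if and only if $k$ Bernoulli$(p)$ trials contain fewer than $n$ successes, i.e.\ $\{\Neg(n,p)>k\}=\{\Bin(k,p)<n\}$. Since $\Neg(n,p)$ is integer-valued, setting $k:=\lfloor 2n/p\rfloor$ (and noting $pk>2n-p\ge\tfrac32 n$ for $n\ge2$ and $p\le1$) we get
\[
\P\bigl(\Neg(n,p)>2np^{-1}\bigr)=\P\bigl(\Bin(k,p)<n\bigr)\le \P\bigl(\Bin(k,p)\le\tfrac23\,pk\bigr).
\]
Now apply the standard lower-tail Chernoff estimate $\P(\Bin(N,q)\le(1-\delta)Nq)\le e^{-\delta^{2}Nq/2}$ (proved by the same exponential-moment argument with a negative parameter) with $N=k$, $q=p$, $\delta=\tfrac13$, obtaining $\P(\Bin(k,p)\le\tfrac23 pk)\le e^{-pk/18}\le e^{-n/12}$. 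This yields the second claim with an explicit absolute $c$ for $n\ge2$, and the single remaining case $n=1$ is absorbed by choosing $C$ large enough that the right-hand side exceeds $1$ there.

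There is no real obstacle in this lemma: it is a repackaging of textbook large-deviation facts in precisely the form needed for \cref{L:total_local,L:total_local2}. The only point requiring minor care is making the constants genuinely uniform in $p\in(0,1]$ and $n\in\N$, which is why the duality route is preferable — $c$ then comes out as a concrete number and $C$ only mops up the trivial small-$n$ range. If one wished to avoid the floor-function bookkeeping, the negative-binomial bound could instead be obtained directly from $\Neg(n,p)=\sum_{i=1}^n G_i$ with $G_i$ i.i.d.\ Geometric$(p)$, using $\E[e^{\theta G_i}]=pe^{\theta}/(1-(1-p)e^{\theta})$ for $\theta<-\log(1-p)$ and choosing $\theta=cp$ for a small absolute constant $c$; this works but is a messier computation.
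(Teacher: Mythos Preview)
Your argument is correct. Both halves go through cleanly, and the handling of the small-$n$ case for the negative binomial via the constant $C$ is fine.

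The paper takes a noticeably different route. For the binomial tail it does not use the Chernoff moment bound at all: instead it first disposes of $p\ge 1/2$ trivially, then for $p<1/2$ observes that the point masses $\P(Z=k)$ decay at least geometrically (by a factor $\le 1/2$) for $k\ge 2np$, so the whole tail is at most $2\P(Z=\lceil 2np\rceil)$, and that single term is then estimated by Stirling's formula. For the negative binomial the paper splits according to whether $p<9/10$ or $p\ge 9/10$: in the former range it invokes Chernoff for a sum of geometrics directly, and in the latter it bounds the event by ``at least $n$ failures in the first $2n$ trials'', i.e.\ a $\Bin(2n,1-p)$ tail, again handled via Stirling. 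Your approach is more uniform and arguably more elementary: the Chernoff bound for the binomial avoids Stirling entirely and gives an explicit constant, and the trials--successes duality $\{\Neg(n,p)>k\}=\{\Bin(k,p)<n\}$ lets you treat all $p$ at once via a single lower-tail Chernoff estimate, with no case split. The paper's combinatorial route has the minor virtue of being self-contained (no exponential-moment machinery), but your version is shorter and yields cleaner constants.
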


\begin{proof}
  For the binomial, observe that if $p\ge 1/2$ the probability in question is 0.  Thus we may assume that $p<1/2$.  Now note that if  $Z=\Bin(n,p)$ and $k\geq 2np$ then $\P(Z=k+1) \leq \P(Z=k)/2$.
  Thus $\P(Z>2np) \leq 2\P(Z=\ouch)$.
  This probability can be written explicitly and estimated:  If $\ouch=n$ then the claim is again immediate.  Otherwise, by Stirling's formula we have, for some universal constant $C$ (below the constant $C$ changes from line to line) 
  \begin{align}
  \P(Z=\ouch)&\le \frac{Cn^{n+1/2}p^{\ouch}(1-p)^{n-\ouch}}{\ouch^{\ouch+1/2} (n-\ouch)^{n-\ouch+1/2}}\\
  &\le \frac{Cn^{n}p^{\ouch}(1-p)^{n-\ouch}}{\ouch^{\ouch} (n-\ouch)^{n-\ouch}}\\
  &\le C 2^{-\ouch}\frac{n^{n}(1-p)^{n-\ouch}}{n^{\ouch} (n-\ouch)^{n-\ouch}}\\
  &=C 2^{-\ouch}\Big( \frac{n(1-p)}{n-\ouch}\Big)^{n-\ouch}\\
  &=C 2^{-\ouch}\Big(1+ \frac{\ouch-np}{n-\ouch}\Big)^{n-\ouch}\\
  &\le C 2^{-\ouch}e^{\ouch-np}\le C (e/4)^{np},
  \end{align}
  and the claim follows.

  For the Negative Binomial, one can use Chernoff's bound for sums of Geometric random variables to obtain the result for $p<9/10$.  For $p\ge 9/10$ we can bound the probability in question by the probability of seeing at least $n$ failures in the first $2n$ trials, which is a Binomial$(2n,1-p)$ probability.
  Again, by including a factor of 2 we can just calculate the probability that this Binomial is exactly equal to $n$, and by a simple application of Stirling, this is exponentially small, uniformly in $p\ge 9/10$.
\end{proof}

\begin{proof}[Proof of Lemma \ref{L:total_local}]
  Note that if $m\ge r$ the claim holds trivially, so assume that $m<r$.
  We shall write the total duration of the $r$ walks as the sum of their cumulative local times at each $s\in(0,r)$.
  Let $N_s$ be the number of walks that reach some $s\in(0,r)$ before exiting the interval.
  Then $N_s$ is $\Bin(r,m/s)$ for $s\in[m,r)$.

  For $s\in [m,r)$,  by \cref{L:basic_ld}, this binomial is exponentially unlikely to be more than twice its expectation, i.e.
  \[
    \P(N_s > 2rm/s) \leq C e^{-crm/s} \leq C e^{-cm}.
  \]
  A walk that reaches $s$ will return to $s$ again before hitting $0$ with probability at most $1-1/2s$ (ignoring that it might reach $r$ and stop there).
  If $N_s\leq k := 2rm/s $ then the total local time at $s$ is bounded by a sum of $\lfloor k\rfloor$ random variables, each being $\Geom(1/2s)$.
  By \cref{L:basic_ld}, we find that $L_s$, the total local time at $s$ satisfies
  \begin{align*}
    \P(L_s > 8rm )
    &\le \P(N_s > 2rm/s) + \P(L_s > 8rm | N_s \leq 2rm/s)\\
    &\le  Ce^{-cm} + Ce^{-crm/s} \leq 2Ce^{-cm}.
  \end{align*}
  
  For $s\leq 2m$ we have $N_s\leq r$, and a sum of the $r$ geometric variables is similarly bounded with high probability by $4rs$, giving the overall bound
  \[ \P(L_s > 8rm) \leq \P(L_s > 4rs) \leq Ce^{-cr} \leq Ce^{-cm}. \]

  It follows that the probability that $L_s>8rm$ for some $s\in (0,r)$ is at most $2Cr e^{-cm}$.
  Thus the probability that $L_s\leq 8rm$ for every $s$ is at least $1 - 2Cr e^{-cm}$.
  On this event $\mathbb{L}_{r,m}\le 8r^2m$.
\end{proof}

\begin{proof}[Proof of Lemma \ref{L:total_local2}]
  We again write the total duration of the $r$ walks as the sum of their cumulative local times at each $s\in(0,r)$.
  If a walk visits $s\in(0,r)$, the probability it exits before returning is at least $2/r$ (uniformly in $s$, with equality only for $s=r/2$).
  Thus the time for each walk at $s$ is bounded by $\Geom(2/r)$, and so the total time at $s$ is bounded by $\Neg(r,2/r)$, which we bound by $\Neg(n,2/r)$.
  By \cref{L:basic_ld}, the probability this exceeds $4nr$ is at most $Ce^{-cn}$. 
  If this does not happen for any $s$, then $\mathbb{L}'_r \leq 4nr^2$.
  Thus $\P_{\bs{a}_r}(\mathbb{L}'_r > 4nr^2) \leq Cre^{-cn}$ as claimed.
\end{proof}

The next step is to enhance \cref{L:interval_bd} to arbitrary (feasible) sets $\bs{A}$ (i.e.~not just $\bs{A}_I$ for intervals $I$).
Recall the event $D=D_{n,y}$ defined above \cref{L:interval_bd}, (that some ``bad'' interval exists), and let $D'=D'_{n,y}$ be the event that there is some feasible set $\bs{A}\subset[-n^2,n^2]$ with size $|\bs{A}|\leq n$ such that $\cL_{\bs{A}} \geq 2^7|\bs{A}|n^{3/2} \sqrt{\log n}$.

\begin{lemma}\label{L:general_bd}
  With these notations, $D'\subset D$, and hence $\P(D') \leq 4n^{-3}$ for all sufficiently large $n$.
\end{lemma}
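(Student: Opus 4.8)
The plan is to prove $D'\subset D$ by reducing an arbitrary feasible set to a single short interval, using the fact that in level $y$ the walks started from $\bs A$ split into non-interacting sub-families, each supported on one of the disjoint short intervals that the family eventually visits.

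Assume $D'=D'_{n,y}$ occurs and fix a feasible multi-set $\bs A\subset[-n^2,n^2]$ with $|\bs A|\le n$ and $\cL_{\bs A}\ge 2^7|\bs A|\,n^{3/2}\sqrt{\log n}$. Run the $\bs A$-family of walks. Each walk marks exactly one new vertex of level $y$: in the ``unvisited'' case it marks its own starting vertex and stops with lifetime $1$; in the ``visited'' case it performs a simple random walk inside the maximal already-visited interval $[\alpha,\beta]$ containing its start until it first steps onto the neighbour $\alpha-1$ or $\beta+1$, which it marks before stopping. Hence once all $|\bs A|$ walks have terminated the visited set has size exactly $|\bs A|$ and is a union of disjoint, pairwise non-adjacent intervals $K_1,\dots,K_r$, each of length at most $|\bs A|\le n$. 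Let $\bs A^{(j)}$ be the sub-multiset of $\bs A$ consisting of the points lying in $K_j$; since every point of $\bs A$ is visited, $\bs A=\bigsqcup_j\bs A^{(j)}$. Moreover a walk's starting vertex and the vertex it discovers always lie in a common visited interval forever after, hence in a common $K_j$, so the vertices of $K_j$ are discovered precisely by the walks of $\bs A^{(j)}$, one per walk, and $|K_j|=|\bs A^{(j)}|$.

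The key point is the decoupling identity $\cL_{\bs A}=\sum_{j=1}^{r}\cL_{\bs A^{(j)}}$. A walk started in $\bs A^{(j)}$ never leaves $K_j$, since the moment it steps outside its current visited interval it stands on an unvisited vertex and stops, so it cannot traverse the gap separating $K_j$ from another component. Therefore the sub-families attached to distinct components interact neither with each other nor with the environment outside their own component, so the instructions used at $K_j$ by the full family are exactly those used by running $\bs A^{(j)}$ in isolation, which number $\cL_{\bs A^{(j)}}$ by the Abelian property (\cref{prop:abelian1} and the permutation-invariance of total lifetime noted after it); summing over $j$ gives the identity. Equivalently one may reorder the walks --- legitimate by \cref{prop:abelian1} --- so as to run $\bs A^{(1)}$ to completion, then $\bs A^{(2)}$, and so on, and check inductively that the $j$th batch visits exactly $K_j$; the one place the Abelian property is genuinely needed is to rule out a later batch bridging a gap into an earlier component, since that would entail discovering a vertex left unvisited by the original family, hence not in $\bs I(\bs A)$.

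Finally, because $\sum_j|\bs A^{(j)}|=|\bs A|$, the inequality $\sum_j\cL_{\bs A^{(j)}}=\cL_{\bs A}\ge\sum_j 2^7|\bs A^{(j)}|\,n^{3/2}\sqrt{\log n}$ forces $\cL_{\bs A^{(j)}}\ge 2^7|\bs A^{(j)}|\,n^{3/2}\sqrt{\log n}=2^7|K_j|\,n^{3/2}\sqrt{\log n}$ for some $j$. As $\bs A^{(j)}$ is feasible and supported in $K_j$ we have $\bs A^{(j)}\subset\bs A_{K_j}$ as multi-sets, so \cref{lem:mono0} gives $\cL_{\bs A_{K_j}}\ge\cL_{\bs A^{(j)}}\ge 2^7|K_j|\,n^{3/2}\sqrt{\log n}$. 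Since $K_j$ is an interval of length at most $n$, contained in $[-n^2-n,n^2+n)$ --- and hence, after harmlessly enlarging the box in the definition of $D$ (which only multiplies by a constant the number of intervals counted in the proof of \cref{L:interval_bd}), in the admissible range --- this shows that $D=D_{n,y}$ occurs. Hence $D'\subset D$, and \cref{L:interval_bd} gives $\P(D')\le\P(D)\le 4n^{-3}$ for all sufficiently large $n$. The main obstacle throughout is the decoupling identity: making rigorous, via the Abelian property, that the sub-families on the eventual components really do not interact, and in particular that no reordering of the walks lets one component reach into another.
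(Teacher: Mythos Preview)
Your proof is correct and follows essentially the same route as the paper: decompose the visited set into its maximal intervals $K_j$, use the Abelian property to get the decoupling identity $\cL_{\bs A}=\sum_j\cL_{\bs A^{(j)}}$, and then compare $\cL_{\bs A^{(j)}}$ to $\cL_{\bs A_{K_j}}$ via monotonicity. The only cosmetic difference is that the paper argues the contrapositive $D^c\subset D'^c$ by summing the per-interval bounds, whereas you argue $D'\Rightarrow D$ by pigeonhole to locate a single bad $K_j$; you also explicitly flag the minor boundary issue (that $K_j$ may spill slightly outside $[-n^2,n^2)$), which the paper silently ignores.
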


\begin{proof}
  For a feasible set $\bs{A}$, let $I_1,\dots,I_k$ be the (maximal contiguous) intervals of vertices visited in the level after starting walks at the points of $\bs{A}$.  These depend on the set $\bs{A}$ and on the environment $\F_y$ (but by the Abelian property these intervals do not depend on the order in which the walks are run).
  
  We claim that (deterministically) $\cL_{\bs{A}} \leq \sum_{j=1}^k \cL_{\bs{A}_{I_j}}$.
  Given this claim, on the event $D^c$ we have for every $I_j$ that $\cL_{\bs{A}_{I_j}} \leq 2^7|I_j|n^{3/2}\sqrt{\log n}$.
  Since we have $\sum_{j=1}^k |I_j| = |\bs{A}|$, it follows that on the event $D^c$,
  $\cL_{\bs{A}} \leq 2^7|\bs{A}|n^{3/2}\sqrt{\log n}$.
  This shows that $D^c\subset D'^c$, so it remains to justify the first claim above.
  
  This claim is a natural consequence of the Abelian property when viewed in the right way.
  To see this, let $\bs{B}_j = \bs{A}\cap I_j$, where the intersection is viewed as multi-sets, so $\bs{B}_j$ will include an element twice if $\bs{A}$ does.  Note that $\bs{A}=\cup \bs{B}_j$.
  By the Abelian property, if we start walks in all points of $\bs{A}$, the walks started in $\bs{B}_j$ remain in $I_j$, and so do not interact in any way with walks started in $\bs{A}\setminus \bs{B}_j$.
  It follows that $\cL_{\bs{A}} = \sum_{j=1}^k \cL_{\bs{B}_j}$.
  Since $\bs{B}_j\subset \bs{A}_{I_j}$, we have that $\cL_{\bs{B}_j} \leq \cL_{\bs{A}_{I_j}}$.
  This completes the proof.
\end{proof}

\subsection{Proof of \cref{thm:range}(ii)}

To complete the proof, we return to the representation from \cref{sec:upper} of the BERW as an interlacement of a horizontal random walk and a vertical one.
Recall from there that $\bs{Y}=(Y_s)_{s \ge 0}$ is a simple one dimensional random walk that gives the sequence of vertical steps taken by the process.
This representation is not compatible with the representation in terms of stacks of instructions at each vertex, but the almost sure results that each one implies still hold.
In particular, we rely on two almost sure statements about $\bs{Y}$:
The first is the law of iterated logarithm, stating that almost surely $\limsup Y_n/\sqrt{2n\log\log n} = 1$.
The second is a corresponding result of Kesten \cite{Kesten_local} concerning the maximum of the local time of a simple random walk.
Let $L_{n,y}$ be the number of visits to $y$ in the first $n$ steps of $\bs{Y}$.
Then almost surely
\[
  \limsup_{n\to\infty} \frac{\max_y L_{n,y}}{\sqrt{2n\log\log n}} = 1.
\]

\begin{lemma}\label{L:LIL}
  Almost surely, for all large enough $t$, 
  \begin{itemize}
  \item no level $y$ with $|y| \geq 2\sqrt{R_t\log\log R_t}$ has been visited, and 
  \item no level has been visited (i.e.~entered vertically) more than $2\sqrt{R_t\log\log R_t}$ times.
  \end{itemize}
\end{lemma}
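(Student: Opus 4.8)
The plan is to reduce both claims to two almost sure facts about the one-dimensional vertical walk $\bs{Y}$, namely the law of the iterated logarithm and Kesten's local time result, both recalled just above the statement.

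First I would record the bookkeeping relating the BERW to $\bs{Y}$. As noted in the introduction, the number of vertical steps taken by $\bs{Z}$ up to time $t$ equals $R_t-1$, so for $s\le t$ the vertical coordinate of $Z_s$ takes values in $\{Y_0,\dots,Y_{R_t-1}\}$; in particular the set of levels visited by time $t$ is $\{Y_m:0\le m\le R_t-1\}$, whose largest absolute value is $\max_{m\le R_t-1}|Y_m|$. Likewise, the $m$-th vertical step of $\bs{Z}$ carries the vertical coordinate from $Y_{m-1}$ to $Y_m$, so the number of times a given level $y$ is entered vertically by time $t$ is $\#\{1\le m\le R_t-1:Y_m=y\}$, which is at most $\max_y L_{R_t-1,y}$. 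Consequently it is enough to prove that, almost surely, for all large $t$,
\[
\max_{m\le R_t-1}|Y_m|<2\sqrt{R_t\log\log R_t}\qquad\text{and}\qquad \max_y L_{R_t-1,y}<2\sqrt{R_t\log\log R_t}.
\]

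Next I would note that $R_t\to\infty$ almost surely (for instance because $R_t$ dominates the horizontal range of $\bs{Z}$, which grows like $\sqrt t$ as observed in the introduction), hence $R_t-1\to\infty$. The law of the iterated logarithm gives $\limsup_{n\to\infty}\max_{m\le n}|Y_m|/\sqrt{2n\log\log n}=1$ almost surely, and Kesten's result \cite{Kesten_local} gives $\limsup_{n\to\infty}\max_y L_{n,y}/\sqrt{2n\log\log n}=1$ almost surely. Since $1<\sqrt 2$ there is a fixed margin: almost surely there is a (random) $n_0$ such that for every $n\ge n_0$ both $\max_{m\le n}|Y_m|$ and $\max_y L_{n,y}$ are strictly below $2\sqrt{n\log\log n}$. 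Combining this with $R_t-1\to\infty$, for all $t$ large enough we may apply the two bounds with $n=R_t-1$; finally, monotonicity of $n\mapsto\sqrt{n\log\log n}$ together with $R_t-1\le R_t$ turns $2\sqrt{(R_t-1)\log\log(R_t-1)}$ into the claimed $2\sqrt{R_t\log\log R_t}$, yielding the two displayed inequalities and hence the lemma.

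I do not anticipate a real difficulty here: all the probabilistic weight sits in the two cited almost sure statements about $\bs{Y}$, and the only points requiring a little care are that these statements must be used in their running-maximum form ($\max_{m\le n}|Y_m|$ and $\max_y L_{n,y}$, rather than $|Y_n|$ or a fixed-$y$ local time; both still have $\limsup$ equal to $1$ under the same normalisation), and that one should keep the non-sharp constant $2$ in place of $\sqrt 2$ so that the asymptotic bounds become genuine inequalities for all large $t$, with enough slack to absorb the replacement of $R_t-1$ by $R_t$ inside the square root.
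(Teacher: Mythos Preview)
Your proposal is correct and follows exactly the same approach as the paper: reduce both claims to the law of the iterated logarithm for $\bs{Y}$ and Kesten's local-time LIL, using that the vertical walk has made $R_t-1$ steps and $R_t\to\infty$. The paper's proof is simply a terse two-sentence version of what you wrote, and even remarks (as you do) that the constant $2$ could be replaced by $\sqrt{2+\eps}$.
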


\begin{proof}
  The first follows since the $Y$ coordinate has performed $R_t-1\le R_t$ steps by time $t$, and $R_t \to \infty$ a.s., and using the law of iterated logarithm for $\bs{Y}$.
  (Indeed, it even holds with $\sqrt{2+\eps}$ in place of the $2$.)
  The second claim holds similarly, using Kesten's bound on the maximal local time applied to the walk $\bs{Y}$.
\end{proof}
               
\begin{proof}[Proof of \cref{thm:range}(ii)]
  We may restrict to the almost sure events of \cref{L:LIL}, so that the vertical movements satisfy laws of iterated logarithms.
  Consider the event $G_n = \bigcup_{|y|\leq n} D'_{n,y}$.
  By a union bound and \cref{L:general_bd} we have that $\P(G_n)\le (2n+1)4n^{-3}$ for all $n$ sufficiently large.
  By Borel Cantelli it follows that almost surely $G_n$ occur only for finitely many $n$, i.e. the event $D'_{n,y}$ does not occur for $n$ large enough for any $y$ with $|y|\leq n$.
  Restricted to these almost sure events, we claim that for all $t$ large enough, $R_t \geq C t^{4/7}(\log t)^{-2/7}$.

  Let $\bs{A}_y(t)$ be the multi-set of entrance points to level $y$ up to time $t$ by the BERW,
  \footnote{$(x,y)\in \bs{A}_y(t)$ means that $(x,y)$ was visited at some time $s\le t$ and at time $s-1$ the BERW was not in level $y$ (in particular $(0,0)\in \bs{A}_0(0)$); the multiplicity of any such point other than $(0,0)$ is at most 2}
  and note that $\sum_y |\bs{A}_y(t)|=R_t$.
  Then for each level $y$ (except for the current level at time $t$, which is still ongoing) the total time spent by the process in level $y$ is $\cL_{y,\bs{A}_y(t)}$.
  The time spent in the current level is even smaller, since the last visit has not ended.
  Thus we have the bound
  \[ t \leq \sum_y \cL_{y,\bs{A}_y(t)}. \]

  Let
  \[ N_t := 2\sqrt{R_t\log\log (R_t \vee 4)} \geq \sqrt{R_t}. \]
  Obviously, $R_t\ge 4$ eventually.
  By the event of \cref{L:LIL}, for some $t_0$ and all $t\geq t_0$, only levels with $|y|\leq N_t$ have been visited, and each level has been visited at most $N_t$ times, so $|\bs{A}_y(t)|\le N_t$ for all $t\ge t_0$.
  For some $t_1$ the event $G_{N_t}$ does not occur for $t\geq t_1$.
  If the maximal absolute value of the $X$ coordinate of the BERW up to some time $t\ge t_1\vee t_0$ is $M$, then we have $M \leq R_t \leq N_t^2$.
  Thus every entrance point in every level $y$ is within $[-N_t^2,N_t^2]$.
  Therefore, since $D'_{N_t,y}$ does not occur, 
  \[
    \cL_{y,\bs{A}_y(t)} \leq 2^7|\bs{A}_y(t)|N_t^{3/2}\sqrt{\log N_t}
  \]
  for every $y$.

  Observe that
  \[ N_t^{3/2} \sqrt{\log N_t} \leq 4 R_t^{3/4} (\log t)^{1/2} (\log\log t)^{3/4}, \]
  which follows from the definition of $N_t$ and the fact that a.s. $R_t\le t$.
  We find that for $t\ge t_0\vee t_1$,
  \begin{align*}
    t &\leq \sum_{y} \cL_{y,\bs{A}_y(t)} \\
    &\leq 2^7N_t^{3/2}\sqrt{\log N_t}\sum_{y}  |\bs{A}_y(t)| \\
    &= 2^7 N_t^{3/2}\sqrt{\log N_t}R_t \\
    &\leq 2^9 R_t^{7/4} (\log t)^{1/2} (\log\log t)^{3/4}.
  \end{align*}
  It follows that for $t\ge t_0\vee t_1$, 
  \[
    R_t \geq \frac{t^{4/7}}{2^{36/7} (\log t)^{2/7} (\log\log t)^{3/7}}. 
    \qedhere \]
\end{proof}

\bigskip

{\bf Acknowledgments.}
Alejandro Ram\'irez thanks the NYU-ECNU Institute of Mathematical
Sciences at NYU Shanghai where part of this work was done.

\medskip

\medskip

{\bf Funding.}
Omer Angel was funded in part by NSERC.  
Mark Holmes was supported by Future Fellowship FT160100166 from the Australian Research Council.  
Alejandro Ram\'irez has been partially supported by Fondecyt 1180259 and by Iniciativa Científica Milenio.

\medskip

\bibliographystyle{plain}

\end{document}